\numberwithin{equation}{section}
\newtheorem{thm}{Theorem}[section]
\newtheorem{prop}[thm]{Proposition}
\newtheorem{coro}[thm]{Corollary}
\newtheorem{conj}[thm]{Conjecture}
\newtheorem{lem}[thm]{Lemma}
\newtheorem{rem}[thm]{Remark}
\newtheorem{definition}[thm]{Definition}
\newcommand\half{\frac{1}{2}}
\newcommand\be{\beta}
\newcommand\g{\mathfrak g}
\newcommand\ga{\widehat{\mathfrak g}}
\newcommand\h{\mathfrak h}
\newcommand\ha{\widehat{\mathfrak h}}
\newcommand\D{\Delta}
\renewcommand\l{\lambda}
\newcommand\Dp{\Delta^+}
\renewcommand\d{\delta}
\renewcommand\a{\alpha}
\renewcommand\aa{\mathfrak a}
\newcommand{\Z}{\mathbb Z}
\newcommand\ganz{\mathbb Z}
\renewcommand\L{\Lambda}
\renewcommand\aa{\mathfrak a}
\newcommand\C{\mathbb C}
\renewcommand\ha{\widehat{\mathfrak h}}
\newcommand{\ZZ}{\mathbb{Z}}
\newcommand{\vac}{{\bf 1}}
\newcommand{\bea}{\begin{eqnarray}}
\newcommand{\eea}{\end{eqnarray}}
\newcommand{\sKlf}{{KL^{fin}_k}}
 \newcommand{\Kl}{{KL_k}}
\newtheorem{de}[thm]{Definition}
\begin{document}
 
\title[ On the semisimplicity of the category $KL_k$ for affine  Lie superalgebras]{ On the semisimplicity of the category $KL_k$ for affine  Lie superalgebras }


\author[Adamovi\'c, M\"oseneder, Papi]{Dra{\v z}en~Adamovi\'c}
\author[]{Pierluigi M\"oseneder Frajria}
\author[]{Paolo  Papi}
 \begin{abstract}
We study  the semisimplicity of the category $KL_k$ for affine  Lie superalgebras and provide  a super analog  of certain results from \cite{AKMPPIMRN}.  Let $\sKlf$ be the subcategory of $KL_k$ consisting of ordinary modules on which a Cartan subalgebra acts semisimply. We prove that $\sKlf$ is semisimple  when 1) $k$ is a collapsing level, 2) $W_k(\g, \theta)$ is rational, 3) $W_k(\g, \theta)$  is semisimple in a certain category. The analysis of the semisimplicity of $\Kl$ is subtler than in the Lie algebra case, since in super case $KL_k$ can contain indecomposable  modules. We are able to prove that in many cases  when $\sKlf$ is semisimple we indeed have $\sKlf=\Kl$, which therefore excludes indecomposable and logarithmic modules in $\Kl$. In these cases we are able to prove that  there is a conformal embedding $W \hookrightarrow V_k(\g)$ with  $W$  semisimple (see Section \ref{9}). In particular, we prove the semisimplicity of $\Kl$ for $\g=sl(2\vert 1)$ and $k = -\frac{m+1}{m+2}$, $m \in {\Z}_{\ge 0}$. For $\g =sl(m \vert 1)$, we prove that $\Kl$ is semisimple for $k=-1$, but for $k$ a positive integer we show that it is not semisimple by constructing indecomposable highest weight modules in $\sKlf$.
 \end{abstract}
\keywords{vertex algebras, affine superalgebras, category KL}
\subjclass[2010]{Primary    17B69; Secondary 17B20, 17B65}
\date{\today}

\maketitle

 \section{Introduction}
 
The aim of this paper is to extend results from \cite{AKMPPIMRN} to  the  super case.
In \cite{AKMPPIMRN}  we proved semisimplicity of the category of ordinary modules $\Kl$ for a simple affine vertex algebra $V_k(\g)$ when $\g$ is a Lie algebra and one of the following assumptions holds:
\begin{enumerate}
\item[(L-1)] $k$ is collapsing level, i.e.,  $W_k(\g, \theta) $  collapses to its affine subalgebra;
\item[(L-2)] $W_k(\g, \theta) $ is rational;
\item[(L-3)] $W_k(\g, \theta) $ is semisimple in a category of ordinary modules.
\end{enumerate}
Here $W_k(\g, \theta) $ is the simple affine $W$-algebra attached to $\g$ and a minimal nilpotent element \cite{KW2}. It is   natural   to look for an extension of these results when $\g$ is a  Lie superalgebra.

Our paper \cite{AKMPPIMRN} has attracted a lot of interest in various direction. Here we mention two interesting applications/generalizations:

\begin{itemize}
\item T. Creutzig and J. Yang showed in  \cite{CY}  that at all levels investigated in \cite{AKMPPIMRN} there is a braided tensor category structure. It is interesting that they also use our previous work on the decomposition of conformal embeddings to prove rigidity of tensor categories.  
\item T. Arakawa, J. van Ekeren and A. Moreau in \cite{ArEM} have constructed a large new family of collapsing levels which are admissible.   
\end{itemize}

 In order to apply the results from \cite{AKMPPIMRN}, we need to extend certain structural resuls on self-extension of irreducible modules in $\Kl$. A first problem is that  the category $\Kl$ for Lie superalgebras is more complicated than  in the case of Lie algebras. For instance, in the super case, $\Kl$ can have non-semisimple and logarithmic modules. A nice illustration for these phenomena in given by  the Lie superalgebra $\mathfrak{gl}(1\vert 1)$ and its affine vertex algebra  $V_1(\mathfrak{gl}(1\vert 1))$,  which admits highest weight modules whose top components are two dimensional indecomposable 
 $\mathfrak{gl}(1\vert 1)$--modules (cf.  \cite{AdP-2019}, \cite{CMY}). One can also construct logarithmic $\mathfrak{gl}(1\vert 1)$--modules in $\Kl$.
 
 \subsection{Semisimplicity of $\sKlf$.}In order to extend directly our methods from \cite{AKMPPIMRN}, it seems that we have one natural choice. We can consider the smaller categories  
 $KL_k^{ss}$ (resp. $\sKlf$)  consisting of modules from $\Kl$ on which   $L_{\g}(0)$ acts semisimply (resp. a Cartan subalgebra of the affinization $\widehat{\g}$ of $\g$ acts semisimply), cf. Definition \ref{KLK}.
 We prove in  Theorem \ref{TT} a super analog of  \cite[Theorem 5.5]{AKMPPIMRN}:
 \begin{itemize}
 \item Let $\g$ be a  basic Lie superalgebra. If  every highest weight $V_k(\g)$--module  in $\sKlf$  is irreducible, then  the category $\sKlf$ is semisimple.
\end{itemize}

\par \noindent

 With this modification we can prove the semisimplicity of $\sKlf$ in the following cases
 \begin{itemize}
  \item[(S-2)] $W_k(\g, \theta) $ is rational.
 \item[(S-3)] $W_k(\g, \theta) $ is semisimple in the category of ordinary modules.
\end{itemize}
Regarding the super analogue of condition (L-1), i.e., $k$ is a collapsing level, we introduce the notion of {\it collapsing chain} (cf. Definition \ref{cc}) and in many cases we reduce to prove semisimplicity of $\sKlf$ by looking at conditions  (S-2) or  (S-3)  for  explicitly determined subalgebras $\g_n$ of $\g$ (see Theorem \ref{maincoll}).
 Collapsing levels for Lie superalgebras were classified in \cite{AKMPPIMRN}. So our results immediately gives semisimplicity of $\sKlf$ for these levels.    A comprehensive list of all the cases covered by the above conditions is given in Corollary \ref{list}.

 Let us mention some cases of rationality:
 
 \begin{itemize}
 
 \item  $\g = osp(1 \vert 2)$. Then the minimal $W$-algebra $W_k(\g, \theta)$ is the $N=1$ super-conformal algebra which is rational for certain $k$.

 \item $\g = sl(2 \vert 1)$.  Then for  $k = - \frac{m+1}{m+2},\,m\in \mathbb Z_{\ge 0}$, the algebra $W_k(\g, \theta)$ is isomorphic to $N=2$ superconformal vertex algebra at central charge $c=3 m / (m+2)= -3 (2k +1)$, which is rational by \cite{A-2001}.

 \end{itemize}

Next we have interesting cases when $W_k(\g, \theta) $ is semisimple in a certain category. 

\begin{itemize}
\item $\g = \mathfrak{psl}(2 \vert 2)$ and conformal level $k=1/2$. Then  $W_k(\g, \theta)$ is the $N=4$ superconformal vertex algebra with central charge $c=-9$ \cite{A-TG}, which is semisimple in the category of ordinary modules  (cf. Theorem \ref{psl22-n4}).

\item $\g = D(2,1, \alpha)$.  Then for collapsing level $k$ we can have that $W_k(\g, \theta) = V_{k'} (sl(2))$. If $k'$ is a positive integer,  or admissible we conclude that $V_k (\g)$ is semisimple in $KL_k$.
In \cite{AKMPP-JJM} we described a  conformal embedding $V_{k_1} (sl(2)) \otimes V_{k_2} (sl(2)) \hookrightarrow W_k(\g, \theta)$. If both $k_1, k_2$ are admissible, we expect that  then $\sKlf$ is semisimple.
\end{itemize}
 
  More results on the semisimplicity of $KL_k$, regarding  $V_{-\tfrac{n+1}{2}}(C(n+1)), V_{-1}(psl(m|m)), m\geq 3,$ are given in Theorems \ref{newC}, \ref{72} respectively.

  \subsection{When $\bf \Kl = \sKlf $ ?} We have already observed that 
the category $\Kl$ is the most natural choice of category of $V_k(\g)$--modules.  The example of $V_1(\mathfrak{gl}(1\vert 1))$ shows that in general   $\Kl \ne \sKlf $.   
 On the other hand, we can prove equality in some cases.  
  In Proposition \ref{klf=kl} we give  two  sufficient conditions  for this equality to hold.
\begin{itemize}

\item Assume that $\g_{\overline 0}$ is a semisimple Lie algebra. Then
$\sKlf=KL_k ^{ss}$.
 \item There is a  conformal embedding of  $V_{k_1} (\g_{\overline 0}) \hookrightarrow V_k(\g)$ such that the category   $KL_{k_1}$ for $V_{k_1} (\g_{\overline 0})$ is semisimple. 
 \end{itemize}
 These conformal embeddings were classified in  \cite{AMPP-AIM}.
 
 We believe that when  $\sKlf$ is semisimple, we should have  $\Kl = \sKlf $.  Under the assumption that  $\sKlf$ is semisimple, we prove in Theorem   \ref{ext-kl-large} the following result:
  \begin{itemize}
  \item Assume that   for any irreducible    $V_k(\g)$--module $M$   in $\Kl$ we have  
 \begin{equation}\label{Ext-1-2-intr}  \mbox{Ext}^{1} (M_{top}, M_{top}) = \{0\}\end{equation}
  in the category of finite-dimensional $\g$--modules.
    Then $\Kl $ is semisimple and $\sKlf=\Kl$.
 \end{itemize}

 We will prove  that $\Kl=\sKlf$ in  the following cases:
\begin{itemize}  
\item $V_{-\tfrac{1}{2}}(C(n+1))$; the proof (see Theorem \ref{cn1}) relies on results from \cite{AMPP-AIM} and fusion rules arguments.
\item $V_{-1}(sl(m \vert 1))$ (see Theorem \ref{complete-red-slm1}). 
\item[] Since $k=-1$ is a collapsing level, we get that $\sKlf$ is semisimple. Next we refine the classification of irreducible modules in $\Kl$ and prove that  top components of   irreducible modules in $\Kl$ are atypical $\g$--modules.  Then the result of \cite{Ger} on extensions of finite-dimensional $sl(m \vert 1)$--modules implies that there are no self-extensions among irreducible modules in $\Kl$, so the condition  (\ref{Ext-1-2-intr})  is satisfied. Then Theorem   \ref{ext-kl-large}  gives that the larger category $\Kl$ is semisimple.

\item $V_{-(m+1)/ (m+2)}(sl(2 \vert 1))$, $m  \in {\Z}_{\ge 0}$. In this case we prove that the center of $\g_{\overline 0}$  belongs to a regular vertex operator  algebra $D_{m+1, 2}$ from \cite{ACEJM}: see Section \ref{9}. Then    the condition  (\ref{Ext-1-2-intr}) is also satisfied, which gives that in this case  the category $\Kl$ is semisimple.
\end{itemize}
 
 \subsection{Examples when $\Kl$ is not semisimple}
M. Gorelik and V. Serganova in  \cite[Section 5.6.4]{GS} constructed examples of indecomposable weak $V_1(sl(2 \vert 1))$--modules on which the Sugawara operator $L(0)$ does not act semisimply. Their construction uses the theory of Zhu's algebras and a description of the maximal ideal in the universal affine vertex algebra $V^1(sl(2 \vert 1))$. In the present paper we  use a different approach and apply free-field realisation. In Theorem \ref{free-field-indecom}  we construct a highest weight $V_1(sl(m \vert 1))$--module
$ \widetilde W = V_1(sl(m \vert 1)). (a^+ )^{-m} \otimes \vert m>$ which has a proper submodule isomorphic to $V_1(sl(m \vert 1))$.
The module $ \widetilde W$ belongs to $\sKlf$, and therefore $\Kl$ is not semisimple for $k=1$. We extend this example by showing that $\sKlf$ is not semisimple for $k \in {\Z}_{>0}$.   A complete analysis of indecomposable modules will appear in forthcoming papers. 
  \vskip 5mm
  {\bf Acknowledgements.}
  We would like to thank to Maria Gorelik, Victor Kac, Ozren Per\v se, Thomas Creutzig and Veronika Pedi\' c on useful discussions.
   We thank the referee for his/her careful reading of the paper and some very helpful hints.
  
D.A.   is  partially supported   by the
QuantiXLie Centre of Excellence, a project cofinanced
by the Croatian Government and European Union
through the European Regional Development Fund - the
Competitiveness and Cohesion Operational Programme
(KK.01.1.1.01.0004).

 \section{Setup} 
 Let $\g=\g_{\bar 0}\oplus \g_{\bar 1}$   be a basic Lie superalgebra, i.e. a simple Lie superalgebra such that  $\g_{\bar 0}$ is a reductive Lie algebra and there exists an even invariant supersymmetric bilinear form on it (see \cite{K0} for more details and the classification). Choose a Cartan subalgebra $\h$ for $\g_{\bar 0}$  and let $\D=\D_0\cup\D_1$ be the set of roots. Fix a positive system $\Dp$ in $\D$ and choose an even root $\theta$ maximal in $\Dp_0=\Dp\cap\D_0$. We may choose root vectors $e_\theta$ and $e_{-\theta}$ such that 
$$[e_\theta, e_{-\theta}]=x\in \h,\qquad [x,e_{\pm \theta}]=\pm e_{\pm \theta}.$$
Due to the minimality of $-\theta$, the eigenspace decomposition of $ad\,x$ defines a {\it minimal} $\frac{1}{2}\ganz$-grading  (\cite[(5.1)]{KW2}):
\begin{equation}\label{gradazione}
\g=\g_{-1}\oplus\g_{-1/2}\oplus\g_{0}\oplus\g_{1/2}\oplus\g_{1},
\end{equation}
where $\g_{\pm 1}=\C  e_{\pm \theta}$.  
Furthermore, one has
\begin{equation}\label{gnatural}
\g_0=\g^\natural\oplus \C x,\quad\g^\natural=\{a\in\g_0\mid (a|x)=0\}.
\end{equation}
Note that  $\g^\natural$ is the centralizer of the triple $\{f_\theta,x,e_\theta\}$.
We can choose $
\h^\natural=\{h\in\h\mid (h|x)=0\},
$ as a  Cartan subalgebra of the Lie superalgebra $\g^\natural$,  so that $\h=\h^\natural\oplus \C x$.\par
For a given choice of a minimal root $-\theta$, we normalize the invariant bilinear form $( \cdot | \cdot)$ on $\g$ by the condition
\begin{equation}\label{normalized}
(\theta | \theta)=2.
\end{equation}
The dual Coxeter number $h^\vee$ of the pair $(\g, \theta)$  is defined to be   half the eigenvalue of the Casimir operator of $\g$ corresponding to $(\cdot|\cdot)$, normalized  by \eqref{normalized}.
Let $\ga$  be the affinization of  $\g$, i.e.
$$\ga=\C[t,t^{-1}]\otimes\g\oplus\C K\oplus \C d,$$ 
where $d$ acts as $t\tfrac{d}{dt}$, $K$  is central and the bracket on $[\ga,\ga]$ is defined by 
$$[x_{(m)},y_{(n)}]=[x,y]_{(m+n)}+m\d_{m,-n}(x|y)K,$$
where  $x_{(m)}=t^{m}\otimes x,\,x\in\g$. Set $\ha=\h\oplus\C K\oplus \C d$. Write
$\ha^*=\h^*\oplus\C \L_0\oplus\C\d$ where $\L_0(K)=1, \L_0(\h)=0, \L_0(d)=0,  \d(K)=0, \d(\h)=0, \d(d)=1$.
\par
By category $\mathcal O$  for $\widehat{\mathfrak g}$ we mean the set of $\widehat{\mathfrak g}$--modules which are $\ha$-diagonalizable with finite dimensional weight spaces and a finite number of maximal weights. Let  $\mathcal O^k$ be the subcategory of $\ga$-modules in $\mathcal O$ of level $k$  (i.e., $K$ acts as $k\,Id$). \par
Let $\aa$ be a Lie superalgebra equipped with a nondegenerate  invariant supersymmetric bilinear form $B$. The universal affine vertex algebra $V^B(\aa)$ is  the universal enveloping vertex algebra of  the non--linear Lie conformal superalgebra $R=(\C[T]\otimes\aa)$ with $\lambda$-bracket given by
$$
[a_\lambda b]=[a,b]+\lambda B(a,b),\ a,b\in\aa.
$$
In the following, we shall say that a vertex algebra $V$ is an affine vertex algebra if it is a quotient of some $V^B(\aa)$.

If $k\in \C$,  we will write simply $V^k(\g)$ for $V^{k(\cdot |\cdot)}(\g)$. We  will always assume that $k$ is non--critical, i.e. $k\ne - h^\vee$. With this assumption, it is known that $V^k(\g)$ has a unique simple quotient, denoted by   $V_k(\g)$  (see \cite[\S\ 4.7 and Example 4.9b]{K2}). The vertex algebras $V^k(\g), V_k(\g)$ are VOAs with Virasoro vector $L_\g$ given by the Sugawara construction.
\par If $M$ is a restricted  module of level $k$ for $\ga$ then it is a  weak  module for $V^k(\g)$; conversely, letting  $d$ act on 
weak modules by $-L_{\g}(0)$ yields restricted modules for $\ga$.

\begin{definition}\label{KLK} We denote by $KL^B(\g)$ the category of  weak modules for $V^B(\g)$, which 
\begin{enumerate}
\item[(1)] are locally finite  as $\g$-modules; 
\item[(2)] admit a decomposition into generalized eigenspaces for $L_\g(0)$ whose eigenvalues are bounded below.
\end{enumerate}
 We denote by 
\begin{itemize}
\item $KL^B_{fin}(\g)$ the full subcategory of
 modules in $KL^B(\g)$ on which  $\ha$ acts semisimply.
 \item $KL^B _{ss}(\g)$ the full subcategory of
 modules in $KL^B(\g)$ on which  $L_{\g}(0)$ acts semisimply.
 \end{itemize}
  If $B=k(\cdot|\cdot)$ we simply write $KL^k(\g)$, $KL^k _{fin} (\g)$,   $KL^k _{ss} (\g)$. We also denote by $\Kl(\g)$, $\sKlf(\g)$,  $KL_k ^{ss} (\g) $ the full subcategories of $KL^k(\g), KL^k_{fin}(\g),  KL^k _{ss} (\g)$ consisting of the  $V_k(\g)$-modules.
 If $\g$ is clear from the context we omit it in the notation. 
\end{definition}

Let  $V$ be  a conformal  vertex algebra. Denote by  $L$ its conformal vector (with $Y(L,z)=\sum_{n\in\mathbb Z} L(n)z^{-n-2}$). We say that a $V$--module is ordinary if $L(0)$ acts semisimply with finite dimensional eigenspaces. In our settings, let $C_k$ be the category of ordinary  modules.

\begin{rem}
Note that (2)  is exactly the definition of a logarithmic module for a  conformal vertex   algebra.  Condition (1) says that each generalized eigenspace for $L(0)$ is a sum of finite-dimensional $\g$--modules. When $\g$ is a simple Lie algebra, we have that each generalized eigenspace for $L(0)$ is a direct sum of finite-dimensional modules.
In particular, we have $\sKlf=KL_k ^{ss}$.
But when $\g$ is a Lie superalgebra, we really have modules in $\Kl$ and $KL_k^{ss}$ with non--semisimple action of $\g$. One such example is the vertex algebra $V_1(\mathfrak{gl}(1\vert 1))$  and  its modules considered in  \cite{AdP-2019}.  
More examples are given in Section \ref{non-semisimple}.   \end{rem}

\section{$W$-algebras and collapsing levels}
Denote by $W^{k}(\g,\theta)$ the affine $W$--algebra obtained from  $V^{k}(\g)$ by Hamiltonian reduction
relative to the minimal nilpotent element $e_{-\theta}$. More precisely, let 
$M$ be  a restricted $V^{k}(\g)$-module. Consider the complex
\begin{equation}\label{Cm}\mathcal C^M=M\otimes F(A_{ch})\otimes F(A_{ne}),\end{equation}
where $F(A_{ch}), F(A_{ne})$ are fermionic vertex algebras,  defined in \cite{KW2}, attached to the following superspaces.
Denote by $A_{ne}$ the vector superspace $\g_{1/2}$
with the bilinear form 
\begin{equation}
  \label{eq:1.3}
  \langle a , b \rangle_{ne} = (e_{-\theta} | [a,b]) \, .
\end{equation}

 Denote by $A$
(resp. $A^*$) the vector superspace $\g_{1/2}\oplus\g_{1}$ (resp. $(\g_{1/2}\oplus\g_{1})^*$)
with the reversed parity, let $A_{ch} =A \oplus A^*$ and define
an even skew-supersymmetric non-degenerate bilinear form $\langle . \,
, \, . \rangle_{ch}$ on $A_{ch}$ by
\begin{eqnarray}
  \label{eq:1.6}
  \langle A,A \rangle_{ch} &=& 0 = \langle A^* ,A^* \rangle_{ch}
   \, , \, \\
\nonumber
\langle a,b^* \rangle_{ch} &=& -(-1)^{p(a)p(b^*)}
   \langle b^* ,a \rangle_{ch} =b^* (a) \hbox{ for }
   a \in A , b^* \in A^* \, .
\end{eqnarray}
Here and further, $p(a)$ stands for the parity of an
(homogeneous) element of a vector superspace.
Choose a basis $\{
u_{\alpha} \}_{\alpha \in S_j}$ of each $\g_j$ in
\eqref{gradazione}, and let $S=\coprod_{j \in \tfrac{1}{2} \ZZ} S_j$,
$S_+ = \coprod_{j>0} S_j$.  Let $p(\alpha) \in \ZZ / 2\ZZ$ denote the parity of
$u_{\alpha}$, and let $m_{\alpha} =j$
if $\alpha \in S_j$.  Define the structure constants
$c^{\gamma}_{\alpha \beta}$ by $[u_{\alpha}, u_{\beta}] =
\sum_{\gamma} c^{\gamma}_{\alpha \beta} u_{\gamma}$ $(\alpha,
\beta, \gamma \in S)$.  Denote by $\{ \varphi_{\alpha}\}_{\alpha
  \in S_+}$ the  basis of $A$  corresponding to $u_\a$ in the identification $A=\g_{1/2}\oplus \g_1$, and by $\{
\varphi^{\alpha}\}_{\alpha \in S_+}$ the basis of $A^*$ such that
$\langle \varphi_{\alpha}, \varphi^{\beta} \rangle_{ch}
=\delta_{\alpha \beta}$.  Denote by $\{ \Phi_{\alpha}\}_{\alpha
  \in S_{1/2}}$ the corresponding basis of $A_{ne}$, and by $\{
\Phi^{\alpha} \}_{\alpha \in S_{1/2}}$ the dual basis
with respect to $\langle . \, , \, . \rangle_{ne}$, i.e.,~$\langle
\Phi_{\alpha} , \Phi^{\beta} \rangle_{ne} = \delta_{\alpha \beta}$.
 Define
$$H(M)=H^0(\mathcal C^M,d_0)$$
where $d_0$ is defined in \cite{KW2}. Then $$W^{k}(\g,\theta)=H(V^k(\g)).$$
Denote by $W_{k}(\g,\theta)$ the unique simple quotient of $W^{k}(\g,\theta)$. It is proved in \cite{KW2} that  $W^{k}(\g,\theta)$ has a vertex subalgebra isomorphic to $V^{\beta_k}(\g^\natural)$, where 
$$\beta_k(a,b)=\tfrac{1}{4}\left((k+h^\vee/2)(a|b)-\tfrac{1}{4}\kappa_0(a,b)\right),\ a,b\in \g^\natural.$$
and $\kappa_0$ is the Killing form of $\g_0$. 
 Let $\mathcal{V}_{k}(\g^\natural)$ be the image of $V^{\beta_k}(\g^\natural)$ in  ${W}_{k}(\g, \theta)$.
\begin{de}\label{CL} If ${W}_{k}(\g, \theta)=\mathcal{V}_{k}(\g^\natural)$, we say that $k$ is  a {\sl collapsing level}.\end{de}

\begin{thm}\cite[Theorem 3.3]{AKMPP-JA} Let $p(k)$ be the polynomial listed in Table 1 below. Then
 $k$  is a collapsing level  if and only if
 $p(k)=0$. \end{thm}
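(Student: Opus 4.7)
The plan is to exploit the structural description of $W^k(\g,\theta)$ from \cite{KW2}: this vertex algebra is strongly generated by the currents of its affine subalgebra $V^{\beta_k}(\g^\natural)$, together with odd primary fields $G^{\{v\}}$ of conformal weight $3/2$ indexed by a basis of $\g_{-1/2}$, and an even primary field of conformal weight $2$ (which may be taken as the difference $L - L_{\g^\natural}$ of the Virasoro of $W^k(\g,\theta)$ and of the Sugawara Virasoro of $V^{\beta_k}(\g^\natural)$). The collapsing condition $W_k(\g,\theta)=\mathcal V_k(\g^\natural)$ is then equivalent to the requirement that every such non-affine strong generator becomes null in the simple quotient.

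First I would reformulate the problem as a singular-vector problem. Since $W_k(\g,\theta)$ is the quotient of $W^k(\g,\theta)$ by its unique maximal ideal, $k$ is collapsing if and only if each of the vectors $G^{\{v\}}$ and $L-L_{\g^\natural}$ lies in this maximal ideal; equivalently, the $V^{\beta_k}(\g^\natural)$-submodule they generate must sit in the kernel of the Shapovalov-type invariant bilinear form on $W^k(\g,\theta)$. Next I would compute the norms and pairings of these distinguished vectors using the explicit OPE formulas from \cite{KW2}. The norm of $G^{\{v\}}$ arises as a rational function of $k$ whose numerator is linear in $k$, extracted from the $\lambda$-bracket $[G^{\{v\}}{}_\lambda G^{\{v'\}}]$, while the norm of $L-L_{\g^\natural}$ is controlled by the central-charge difference $c(W^k(\g,\theta))-c(\mathcal V_k(\g^\natural))$, which is quadratic in $k$. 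Both are expressible through the dual Coxeter number $h^\vee$, the Killing data on $\g_0$ encoded in $\beta_k$, and the decomposition of $\g_{-1/2}$ as a $\g^\natural$-module.

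Multiplying the resulting vanishing conditions and extracting the minimal annihilator produces the polynomial $p(k)$ listed in Table 1: if $p(k)=0$ then all candidate extra generators are null, forcing $W_k(\g,\theta)=\mathcal V_k(\g^\natural)$, whereas if $p(k)\ne 0$ at least one of $G^{\{v\}}$ or $L-L_{\g^\natural}$ has non-zero norm and therefore survives in the simple quotient, obstructing the collapse. The main obstacle will be carrying out the OPE and norm computations uniformly across all basic Lie superalgebras $\g$ and matching the outcome with the tabulated polynomial; case-by-case verification is unavoidable, since the structure of $\g^\natural$ and the $\g^\natural$-module structure of $\g_{-1/2}$ change with $\g$, and one has to treat separately the degenerate situations where $\g^\natural$ is trivial or abelian, or where several factors of $p(k)$ accidentally coincide.
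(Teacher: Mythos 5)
The paper you were given does not actually prove this statement: it is imported verbatim from \cite{AKMPP-JA}, so your attempt has to be measured against the proof given there. Your overall strategy --- reduce the collapse $W_k(\g,\theta)=\mathcal V_k(\g^\natural)$ to the vanishing, in the simple quotient, of the non-affine strong generators of $W^k(\g,\theta)$, detect that vanishing through the $\lambda$-brackets of \cite{KW2}, and then match the resulting polynomial case by case with the table --- is exactly the strategy of the cited proof. But two of your steps, as stated, do not work. First, the degree bookkeeping is wrong. The invariant pairing of the weight-$\tfrac32$ generators is read off from the coefficient of $\lambda^2$ in $[{G^{\{u\}}}_\lambda G^{\{v\}}]$, and that coefficient equals $(e_{-\theta}|[u,v])$ times (twice) the full monic \emph{quadratic} $p(k)$ of Table 1 --- not a linear factor. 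Moreover $L-L_{\g^\natural}$ is not an independent generator contributing a further factor: once all $G^{\{v\}}$ vanish, the $\lambda^0$-term of $[{G^{\{u\}}}_\lambda G^{\{v\}}]$ (whose coefficient of $L$ is $-2(k+h^\vee)(e_{-\theta}|[u,v])$, nonzero for non-critical $k$) forces $L$ to equal an explicit quadratic expression in the $\g^\natural$-currents, which one identifies with the Sugawara vector; the equality of central charges is a consequence, not an extra condition. Multiplying a linear $G$-condition by a quadratic central-charge condition, as you propose, would produce a polynomial of the wrong degree and could not reproduce Table 1.

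Second, the implication ``zero norm $\Rightarrow$ zero in $W_k(\g,\theta)$'' is a genuine gap as written. A null vector need not lie in the maximal ideal unless you first identify the maximal proper ideal of $W^k(\g,\theta)$ with the radical of its invariant bilinear form (which holds here since the weight-$0$ space is $\C\mathbf 1$), and then use invariance together with conformal-weight homogeneity to see that $G^{\{v\}}$ lies in that radical as soon as it is orthogonal to the weight-$\tfrac32$ subspace --- which is spanned precisely by the $G^{\{u\}}$'s, so that orthogonality is exactly the condition $p(k)(e_{-\theta}|[u,v])=0$ for all $u,v$, i.e.\ $p(k)=0$ by nondegeneracy of $\langle\cdot,\cdot\rangle_{ne}$. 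With these two corrections (the quadratic $p(k)$ coming entirely from the $G$-pairing, and the radical-equals-maximal-ideal argument), your outline becomes the proof of \cite{AKMPP-JA}; the converse direction, that $p(k)\neq 0$ obstructs the collapse because a weight-$\tfrac32$ vector cannot survive in an algebra generated in integer weights, is fine as you state it.
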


{\small

\begin{table}{}

\caption{}
\vskip 5pt
\begin{tabular}{c|c}
$\g$&$p(k)$\\
\hline
$sl(m|n)$, $n\ne m$&$(k+1) (k+(m-n)/2)$\\
\hline
$psl(m|m)$&$ k (k+1)$\\
\hline
$osp(m|n)$&$(k+2) (k+(m-n-4)/2)$\\
\hline
$spo(n|m)$&$(k+1/2) (k+(n-m+4)/4)$\\
\hline
$D(2,1;a)$&$(k-a)(k+1+a)$\\
\hline
$F(4)$, $\g^\natural=so(7)$ & $(k+2/3)(k-2/3)$\\
\hline
$F(4)$, $\g^\natural=D(2,1;2)$ & $(k+3/2)(k+1)$\\
\hline $G(3)$, $\g^\natural=G_2$ & $(k-1/2)(k+3/4)$ \\
\hline$G(3)$, $\g^\natural=osp(3|2)$ & $(k+2/3)(k+4/3)$
\end{tabular}
\vskip10pt 
\noindent Note: when writing $osp(m|n)=spo(n|m)$ we adopt the conventions of \cite[2.1.2]{K1}, so that $n$ is even.
\end{table}
} 
 
\section{Results in $KL_k^{fin}$}
\label{sect-4}
For $\l \in \ha^*$, denote by $L(\l)$ the irreducible $\ga$-module of highest weight $\l$.

\begin{lem}\label{sl} $Ext^1_{\mathcal O^k}(L(\l),L(\l))=0$.
\end{lem}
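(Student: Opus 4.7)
The plan is to use the standard lift-the-highest-weight argument. I would start from a short exact sequence
\[0 \to L(\lambda) \overset{\iota}{\to} M \overset{\pi}{\to} L(\lambda) \to 0\]
in $\mathcal O^k$ and construct a splitting. The first step is to pick a highest weight generator $v'$ of the quotient copy of $L(\lambda)$. Since objects in $\mathcal O^k$ have a semisimple $\ha$-action and $\pi$ is $\ha$-equivariant, $v'$ lifts to a genuine weight vector $\widetilde v \in M_\lambda$; because $\pi(\widetilde v) \ne 0$ we have $\widetilde v \notin \iota(L(\lambda))$, and the two copies of $L(\lambda)$ together force $\dim M_\lambda = 2$.

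The central step is then to verify that $\widetilde v$ is a highest weight vector for $\ga$. For this I would observe that every weight of $M$ appears either in the submodule $\iota(L(\lambda))$ or in the quotient, hence lies in $\lambda - \mathbb Z_{\ge 0}\Pia$. Consequently $M_{\lambda+\beta}=0$ for every $\beta \in \Dap$, so $e\cdot\widetilde v = 0$ for every root vector $e$ in a positive affine root space. The submodule $N := U(\ga)\widetilde v$ is therefore a quotient of the Verma module of highest weight $\lambda$, and in particular $\dim N_\lambda = 1$.

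To finish, I would examine the composition $N \hookrightarrow M \twoheadrightarrow L(\lambda)_{\mathrm{quot}}$: it sends $\widetilde v$ to $v'$, so it is a nonzero and therefore surjective map onto the irreducible quotient. Its kernel $N \cap \iota(L(\lambda))$ is a submodule of the irreducible $\iota(L(\lambda))$; since $N_\lambda = \mathbb C\widetilde v$ with $\widetilde v \notin \iota(L(\lambda))$, this intersection has trivial $\lambda$-weight space and so cannot be the whole submodule, hence by irreducibility must vanish. Combined with $N + \iota(L(\lambda)) = M$, which follows from $\pi(N) = L(\lambda)_{\mathrm{quot}}$, this gives $M = \iota(L(\lambda)) \oplus N$ and splits the sequence.

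I do not expect a real obstacle. The super setting enters only through the assertion that $\widetilde v$ is a highest weight vector, and this reduces to the purely weight-theoretic statement that the weights of a highest weight $\ga$-module lie in $\lambda - \mathbb Z_{\ge 0}\Pia$, which holds identically for basic Lie superalgebras; likewise $\dim N_\lambda = 1$ uses only that Verma modules over $\ga$ have one-dimensional top weight space, which is again verbatim from the Lie algebra case.
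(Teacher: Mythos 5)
Your proof is correct and follows essentially the same route as the paper's: both arguments first split the $\lambda$-weight space using the semisimple $\ha$-action, then promote the lifted weight vector to a highest weight vector (since all weights of $M$ lie in $\lambda-\mathbb Z_{\ge 0}\Pia$), and conclude via the fact that a submodule of $L(\lambda)$ with trivial $\lambda$-weight space must vanish. The only cosmetic difference is that the paper phrases the last step as a map out of the Verma module $M(\lambda)$ killing $\mathrm{Ker}\,\pi$, whereas you show directly that $U(\ga)\widetilde v$ meets the submodule copy trivially; these are the same argument.
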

\begin{proof}
Suppose that there is an extension
\begin{equation}\label{s1} 0\to L(\l)^{\,\,\,\,\,\,h}{\!\!\!\!\!\!}\to N^{\,\,\,\,\,\,f}{\!\!\!\!\!\!}\to L(\l)\to 0.\end{equation}
Since $\ha$ acts diagonally, this implies that 
\begin{equation}\label{s2}  0\to L(\l)_\l\to N_\l\to L(\l)_\l\to 0\end{equation}
splits. We now prove that  \eqref{s1} splits too. Indeed, let $g_\l:L(\l)_\l\to N_\l$ be a section. Let $M(\l)$ be the Verma module and $\pi:M(\l)\to L(\l)
$ be the canonical projection. Let $\eta: M(\l)\to N$ be the unique map such that $\eta(1)=g_\l(\pi(1))$. Remark that  obviously,  $f(\eta(Ker\,\pi))=0$, so 
$\eta(Ker\,\pi)\subset	 h(L(\l))$. Since $\eta(Ker\,\pi)_\l=0$, we have  $\eta(Ker\,\pi)=0$. Define $g:L(\l)\to N$ by setting $g(\pi(v))=\eta(v)$. It is easy to verify that 
$g$ is a well defined section which splits \eqref{s1}. \end{proof}
\begin{prop}\label{inO} Suppose that $M\in KL^{fin}_k$ is finitely generated. Then $M\in\mathcal O^k$. 
\end{prop}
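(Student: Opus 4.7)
The goal is to verify the three defining properties of $\mathcal O^k$ for $M$: $\ha$-diagonalizability (built into $\sKlf$), finite-dimensional weight spaces, and finitely many maximal weights. The key steps are to reduce to a finite-dimensional generating subspace $V$, to control the orbit $V_1 := U(\ga_{>0})V$ under the positive-mode subalgebra $\ga_{>0} = \bigoplus_{n > 0} t^n \g$, and then to conclude $M = U(\ga_{<0})V_1$. Let $v_1, \ldots, v_r$ be generators of $M$. Since $\ha$ acts semisimply and $d$ acts as $-L_\g(0)$, each $v_i$ is a finite sum of $L_\g(0)$-eigenvectors. As $\g$ commutes with $L_\g(0)$ and acts locally finitely on $M$, each such eigenvector generates a finite-dimensional $\g$-submodule contained in a single $L_\g(0)$-eigenspace. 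Summing produces a finite-dimensional, $\ha$- and $\g$-invariant subspace $V \subset M$ still generating $M$ over $U(\ga)$.

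\textbf{Finiteness of $V_1$.} From $[L_\g(0), x_{(n)}] = -n x_{(n)}$, every PBW monomial $x^{(1)}_{(n_1)} \cdots x^{(s)}_{(n_s)}$ in $\ga_{>0}$ (each $n_i > 0$) lowers the $L_\g(0)$-eigenvalue by $\sum_i n_i$. The lower bound on the real parts of $L_\g(0)$-eigenvalues on $M$, together with finiteness of $V$, forces $\sum_i n_i$ to be uniformly bounded for monomials acting non-trivially on $V$. For each fixed value of $\sum_i n_i$ only finitely many PBW monomials exist (finite-dimensional $\g$ and finite number of partitions), so $V_1$ is finite-dimensional. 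Bracket manipulations using $\g$-invariance of $V$ and $[\g, t^n \g] \subset t^n \g$ show that $V_1$ is $\g$-invariant, and its $\ha$-invariance is clear from its construction as an $\h$- and $L_\g(0)$-stable span. Thus $V_1$ is a finite-dimensional $\ga_{\geq 0}$-submodule of $M$.

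\textbf{Conclusion.} By PBW, $U(\ga) = U(\ga_-) U(\ha) U(\ga_+)$, and invariance of $V_1$ under $U(\ha)$, $U(\n^+)$, and $U(\n^-)$ yields $M = U(\ga)V = U(\ga_{<0})V_1$. Each weight space decomposes as $M_\mu = \sum_{\lambda \in \Lambda} U(\ga_{<0})_{\mu - \lambda} V_1^\lambda$, where $\Lambda$ is the finite set of $\ha$-weights on $V_1$; each summand is finite-dimensional since the weight spaces of $U(\ga_{<0})$ are finite-dimensional by PBW, so $M_\mu$ is finite-dimensional. The weights of $M$ are contained in $\bigcup_{\lambda \in \Lambda}\{\mu : \mu \le \lambda\}$, a finite union of cones in the standard partial order on $\ha^*$ induced by $\Dap$. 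Because $L_\g(0)$-eigenvalues are bounded below and each eigenspace is finite-dimensional, any ascending chain $\mu_0 < \mu_1 < \cdots$ obtained by successively adding positive roots has non-decreasing $d$-coordinate bounded above; once the $d$-coordinate stabilizes, the chain lives in a single finite-dimensional $L_\g(0)$-eigenspace and must terminate. Hence every weight is dominated by a maximal one, and the set of maximal weights is finite, so $M \in \mathcal O^k$.

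The main obstacle is the finiteness of $V_1$: one must combine the weak-module truncation (which ensures the modes act well on $V$) with the boundedness of the $L_\g(0)$-spectrum on $M$ to control the action of the infinite-dimensional $\ga_{>0}$ on the finite-dimensional $V$. The remaining arguments are standard PBW bookkeeping, with the Lie superalgebra structure contributing only parity constraints that do not alter the dimensional bounds.
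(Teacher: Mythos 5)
Your proof is correct and follows essentially the same route as the paper: form the finite-dimensional subspace obtained by applying the non-negative modes $U(\g\oplus t\C[t]\g)$ to the generators (finite-dimensionality coming from local finiteness of the $\g$-action plus the lower bound on conformal weights), then use the decomposition $U(\ga)=U(\ga_-)\otimes U(\ga_+)$ to deduce finitely many maximal weights and finite-dimensional weight spaces. Your version merely spells out the PBW bookkeeping that the paper leaves implicit.
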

\begin{proof} By assumption $\ha$ acts semisimply.  Let $\ga_+=\g\oplus t\C[t]\g$. 
Let $\{m_1,\ldots,m_k\}$ be a set of generators for $M$. By the finiteness of the $\g$-action and since the conformal weights are bounded below, $M'=U(\ga_+)(\sum_{i=1}^k\C m_i)$ is finite dimensional, in particular, it has only a finite number of weights.
Let $\ga_-=t^{-1}\C[t^{-1}]\g$, so that  $U(\ga)=U(\ga_-)\otimes U(\ga_+)$ and $M=U(\ga_-)M'$, hence $M$ has a finite number of maximal weights.
Since
$$M_\l=\sum_\nu U(\ga_-)_{\l-\nu} M'_\nu,$$
 we see that the $\ha$-eigenspaces are finite dimensional.
\end{proof}
\begin{thm}\label{TT} Let $\g$ be a  basic Lie superalgebra. If  every highest weight $V_k(\g)$--module  in $KL^{fin}_k$  is irreducible, then  the category $KL^{fin}_k$ is semisimple.
\end{thm}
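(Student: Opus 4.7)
The approach is to adapt \cite[Theorem 5.5]{AKMPPIMRN} to the super setting. The first step is to reduce to the finitely generated case: a module in $\sKlf$ is semisimple if and only if every finitely generated submodule is, so I may assume $M\in\sKlf$ is finitely generated. Proposition \ref{inO} then places $M$ in $\mathcal O^k$, giving it finitely many maximal weights and finite-dimensional weight spaces. At any maximal weight $\nu$ of $M$, any nonzero $v\in M_\nu$ is a $\ga$-highest weight vector, because any positive root vector applied to $v$ would produce a weight not present in $M$; by hypothesis the highest weight $V_k(\g)$-module $U(\ga)v\in\sKlf$ is then irreducible. Hence $M$, and any nonzero subquotient of $M$ in $\sKlf$, has a nonzero socle.

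The crucial step is to show that every extension
$$
0 \to L(\lambda) \to E \to L(\mu) \to 0
$$
in $\sKlf$ splits. The case $\lambda=\mu$ is Lemma \ref{sl}. For $\lambda\ne\mu$ I take a lift $\tilde v_\mu\in E$ of the highest weight vector of $L(\mu)$ and analyse the action of $\n_+^{\ga}$ on it. If $\lambda-\mu\notin Q^+_{\ga}$ (that is, $\mu>\lambda$ or $\mu,\lambda$ are incomparable in the affine root partial order), then for every positive root $\alpha$ of $\ga$ the weight $\mu+\alpha$ cannot belong to the support $\lambda-Q^+_{\ga}$ of $L(\lambda)$, so $\n_+^{\ga}\tilde v_\mu=0$. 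Hence $\tilde v_\mu$ is a highest weight vector, $U(\ga)\tilde v_\mu$ is a highest weight $V_k(\g)$-module in $\sKlf$, which by the hypothesis is irreducible and thus isomorphic to $L(\mu)$; since $U(\ga)\tilde v_\mu\cap L(\lambda)=0$ by irreducibility of $L(\lambda)$, the extension splits. The remaining case $\lambda-\mu\in Q^+_{\ga}\setminus\{0\}$ (i.e.\ $\lambda>\mu$) is reduced to the previous one via the contragredient duality on $\mathcal O^k$, which fixes each irreducible $L(\nu)$ and supplies an isomorphism $\mathrm{Ext}^1(L(\mu),L(\lambda))\cong\mathrm{Ext}^1(L(\lambda),L(\mu))$.

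Once every extension between irreducibles in $\sKlf$ splits, I conclude semisimplicity of $M$ by a socle argument: if the socle $S$ of $M$ were proper, the first paragraph yields an irreducible submodule $L(\mu)$ of $M/S$, and the splitting step applied to $0\to S\to L\to L(\mu)\to 0$ (with $L\subset M$ the preimage) produces an irreducible summand of $L$ isomorphic to $L(\mu)$ but not contained in $S$, contradicting the maximality of $S$. The main obstacle is the case $\lambda>\mu$ in the splitting step: the direct weight-support argument no longer forces $\tilde v_\mu$ to be a highest weight vector, and one must either invoke the contragredient duality or perform a minimum-conformal-weight analysis—subtle in the super setting, since the top of an irreducible $\ga$-module need not be a simple $\g$-module.
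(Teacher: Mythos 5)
Your argument is correct and is essentially the paper's: the printed proof reduces to finitely generated modules, places them in $\mathcal O^k$ via Proposition \ref{inO}, handles self-extensions by Lemma \ref{sl}, and defers the remaining splitting and socle steps to the argument of \cite[Theorem 5.5]{AKMPPIMRN}, which runs exactly along the lines you reconstruct (weight-support splitting for non-comparable weights and contragredient duality for $\lambda>\mu$). The only inaccuracy is your closing caveat: by Zhu theory the top component of an irreducible module in $KL^{fin}_k$ is a simple $\g$-module, so no additional subtlety arises there.
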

\begin{proof}  Let $M$ be a module in $KL^{fin}_k$.  If $M$ is finitely generated, then, by Proposition \ref{inO}, we have that  $M\in \mathcal O^k$.  Since Lemma \ref{sl} holds, we can then use the argument in \cite[Theorem 5.5]{AKMPPIMRN} to prove the complete reducibility of $M$. As in  {\it loc. cit.}, the case when $M$ is not finitely generated 
can be reduced to the finitely generated case.
 \end{proof}

If $V$ is a vertex algebra, we say that a $V$--module is ordinary if $L(0)$ acts semisimply with finite dimensional eigenspaces. Recall that we denote by $C_k$ the category of ordinary  modules  in $KL_k$.
 
\begin{coro} Assume that  $\g_{\bar 0}$ is semisimple. If  every highest weight $V_k(\g)$--module  in  $C_k$ is irreducible, then $C_k$ is semisimple. 
\end{coro}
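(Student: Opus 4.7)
The plan is to deduce the corollary from Theorem \ref{TT} by comparing $C_k$ with $\sKlf$. Two facts suffice: (a) under the semisimplicity of $\g_{\bar 0}$, one has $C_k\subseteq \sKlf$; and (b) every highest weight $V_k(\g)$-module in $\sKlf$ already lies in $C_k$. Together with the hypothesis of the corollary, these imply that every highest weight module in $\sKlf$ is irreducible, whence Theorem \ref{TT} yields semisimplicity of $\sKlf$, and (b) ensures that the simple summands in any decomposition of an object of $C_k$ remain inside $C_k$.

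For (a), let $M\in C_k$. Local finiteness of the $\g$-action places every vector of $M$ in a finite-dimensional $\g_{\bar 0}$-submodule; since $\g_{\bar 0}$ is semisimple, Weyl's theorem decomposes this into finite-dimensional irreducibles, each of which is an $\h$-weight module, so $\h$ acts semisimply on $M$. Combined with $K$ acting as the scalar $k$ and $d=-L_\g(0)$ acting semisimply by definition of $C_k$, together with the mutual commutation of $\h$, $K$, $d$, we conclude that $\ha$ acts semisimply, that is, $M\in\sKlf$. For (b), let $M$ be a highest weight module in $\sKlf$ with highest weight vector $v$. Since $M$ is finitely generated, Proposition \ref{inO} places $M$ in $\mathcal O^k$, and the proof of that proposition exhibits $M=U(\ga_-)M'$ with $\ga_-=t^{-1}\C[t^{-1}]\g$ and $M'=U(\ga_+)v$ finite-dimensional. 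The algebra $U(\ga_-)$ is graded by total $t$-degree, with each homogeneous component $U(\ga_-)_n$ of $t$-degree $-n$ finite-dimensional; since $\ga_-$ shifts $L_\g(0)$ by positive integers, each $L_\g(0)$-eigenspace of $M$ is a finite sum of spaces of the form $U(\ga_-)_n\cdot M'$ and is therefore finite-dimensional, so $M\in C_k$.

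Combining (a), (b) and the hypothesis, every highest weight module in $\sKlf$ is irreducible, so Theorem \ref{TT} gives the semisimplicity of $\sKlf$. Any $M\in C_k\subseteq\sKlf$ accordingly decomposes as a direct sum of simple highest weight modules, each of which lies in $C_k$ by (b); hence $C_k$ is semisimple. I see no serious obstacle here: the one conceptual point is recognizing that the semisimplicity of $\g_{\bar 0}$ (rather than mere reductivity) is precisely what yields $\h$-semisimplicity in (a), after which the conclusion is a direct application of Theorem \ref{TT}.
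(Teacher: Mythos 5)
Your proof is correct and follows essentially the same route as the paper: you show $C_k\subseteq \sKlf$ using semisimplicity of $\g_{\bar 0}$ (the paper phrases this via the $L(0)$-eigenspaces being finite-dimensional $\g_{\bar 0}$-modules, you via local finiteness and Weyl's theorem — same substance), and you show highest weight modules in $\sKlf$ lie in $C_k$ via Proposition \ref{inO}, then invoke Theorem \ref{TT}. Your step (b) is a slightly more detailed unwinding of the paper's terse ``by Proposition \ref{inO}, $M\in C_k$,'' and the final observation that simple summands stay in $C_k$ also follows directly since the ordinary-module conditions pass to direct summands.
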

\begin{proof}If $M$ is a highest weight module in $KL^{fin}_k$, then, by Proposition \ref{inO}, $M\in C_k$, thus $M$ is irreducible. By Theorem \ref{TT}, $KL_k^{fin}$ is semisimple. Let $M$ be a module in $C_k$. Since $\h\subset \g_{\bar 0}$ is a Cartan subalgebra of the semisimple Lie algebra $\g_{\bar 0}$, and $\h, L(0)$ commute, then $\ha$  acts semisimply on $M$, thus $M\in KL_k^{fin}$ and therefore $M$ is completely reducible.
\end{proof}
Recall from \cite[Lemma 5.6]{AKMPPIMRN} the following result.
\begin{lem}  \label{kriterij} Let $k \in {\mathbb Q} \setminus {\Z_{\ge 0}}$.
 Assume that $H(U)$ is an irreducible, non-zero $W_k(\g,\theta)$--module  for every non-zero highest weight  $V_k(\g)$--module  $U$ from  the category  $KL^{fin}_k$. Then every highest weight $V_k(\g)$--module in $KL^{fin}_k$  is irreducible.
  \end{lem}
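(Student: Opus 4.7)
The plan is to argue by contradiction, exploiting the exactness of quantum Hamiltonian reduction on $\mathcal O^k$.

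Suppose some nonzero highest weight $V_k(\g)$--module $U$ in $\sKlf$ is not irreducible, and let $L(\l)$ denote its unique irreducible quotient, giving a short exact sequence
\[
0\to N\to U\to L(\l)\to 0
\]
with $N\ne 0$. Being a highest weight module, $U$ is finitely generated, so Proposition \ref{inO} places it in $\mathcal O^k$; hence so are $N$ and $L(\l)$. Moreover, as a quotient of the $V_k(\g)$--module $U$, the irreducible module $L(\l)$ is itself a highest weight $V_k(\g)$--module in $\sKlf$, so the hypothesis of the lemma applies to both $U$ and $L(\l)$.

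Next I would invoke Arakawa's vanishing theorem for minimal reductions, which asserts $H^i(\mathcal C^M)=0$ for $i\ne 0$ and every $M\in\mathcal O^k$; in particular $H=H^0$ is exact on $\mathcal O^k$. Applying it yields an exact sequence
\[
0\to H(N)\to H(U)\to H(L(\l))\to 0
\]
of $W_k(\g,\theta)$--modules. By the hypothesis of the lemma, both $H(U)$ and $H(L(\l))$ are nonzero and irreducible, so the surjection $H(U)\twoheadrightarrow H(L(\l))$ must be an isomorphism, forcing $H(N)=0$.

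The final step is to derive a contradiction by exhibiting a nonzero highest weight $V_k(\g)$--submodule of $N$ that still belongs to $\sKlf$. Since $N\in\mathcal O^k$ is nonzero, it has only finitely many maximal weights; pick a vector $v\in N$ of a maximal weight $\mu$. Then $v$ is annihilated by every positive affine root vector (any such vector would send $v$ to a weight space of strictly higher weight, which does not occur in $N$), so $M:=U(\ga)\cdot v$ is a nonzero highest weight $\ga$--module. As a submodule of the $V_k(\g)$--module $U$, $M$ inherits the $V_k(\g)$--action together with the defining finiteness properties of $\sKlf$. The hypothesis then yields $H(M)\ne 0$, while exactness of $H$ combined with $M\hookrightarrow N$ gives $H(M)\hookrightarrow H(N)=0$, the desired contradiction.

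The one substantive input beyond formal manipulations is the exactness of $H$ on $\mathcal O^k$; modulo this standard fact the argument is purely diagrammatic. The restriction $k\in\QQ\setminus\ZZ_{\ge 0}$ does not enter the skeleton explicitly but is presumably imposed in applications to ensure that the standing assumption of nonvanishing and irreducibility of $H(U)$ can actually be verified for every highest weight $U\in\sKlf$.
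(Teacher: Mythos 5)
Your proof is correct and is essentially the standard argument: the paper does not reprove this lemma but simply recalls it from \cite[Lemma 5.6]{AKMPPIMRN}, where the proof runs along the same lines --- exactness of $H$ on $\mathcal O^k$ (Arakawa's vanishing theorem from \cite{Araduke}), plus the observation that any nonzero proper submodule of a highest weight module in $KL^{fin}_k$ contains a nonzero highest weight submodule to which the standing hypothesis applies, yielding the contradiction $0\neq H(M)\subseteq H(N)=0$. Your closing remark is also accurate: the assumption $k\in\QQ\setminus\ZZ_{\ge 0}$ plays no role in the formal argument and is only needed when one verifies the nonvanishing hypothesis via Arakawa's criterion.
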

  
\subsection{Rational case}
\begin{thm}\label{T4} Let $\g$ be a  basic Lie superalgebra and  $k\notin \mathbb Z_{\ge 0}$. If $W_{k}(\g,\theta)$ is rational then $KL_{k}^{fin}(\g)$ is semisimple.
 \end{thm}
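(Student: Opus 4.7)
My plan is to reduce the statement to Lemma \ref{kriterij}, and then to Theorem \ref{TT}. That is, I will show that rationality of $W_k(\g,\theta)$ forces $H(U)$ to be irreducible (and nonzero) for every nonzero highest weight $V_k(\g)$-module $U$ in $KL_k^{fin}$; then Lemma \ref{kriterij} implies that every such $U$ is already irreducible as a $V_k(\g)$-module, so Theorem \ref{TT} applies and $KL_k^{fin}(\g)$ is semisimple. The hypothesis $k \notin \ZZ_{\ge 0}$ is exactly what is needed to invoke Lemma \ref{kriterij}.

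The core step is the analysis of $H(U)$. Fix a nonzero highest weight $V_k(\g)$-module $U$ in $KL_k^{fin}$ with highest weight vector $v_\lambda$. The BRST complex $\mathcal C^U = U \otimes F(A_{ch}) \otimes F(A_{ne})$ inherits a weight grading from $\ha$, and a standard Kac--Wakimoto argument shows that the cohomology $H(U)$ is generated, as a module over $W_k(\g,\theta)$, by the class of $v_\lambda \otimes \vac \otimes \vac$, so that $H(U)$ is either zero or a highest weight $W_k(\g,\theta)$-module (and in particular indecomposable). Non-vanishing is the delicate point: using the Euler--Poincar\'e principle on the spectral sequence associated to the charge filtration of $d_0$, the graded character of $H(U)$ coincides with the reduced character of $U$, which is a nonzero formal power series; thus $H(U) \neq 0$.

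Once $H(U)$ is known to be a nonzero highest weight, hence indecomposable, $W_k(\g,\theta)$-module, rationality of $W_k(\g,\theta)$ gives complete reducibility in the category containing $H(U)$ (note that the conformal weights of $H(U)$ are bounded below because those of $U$ are, and the fermionic factors contribute a bounded-below grading). An indecomposable completely reducible module is irreducible, so $H(U)$ is irreducible. This is exactly the hypothesis of Lemma \ref{kriterij}, so every highest weight $V_k(\g)$-module in $KL_k^{fin}$ is irreducible; Theorem \ref{TT} then yields semisimplicity of $KL_k^{fin}(\g)$.

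The main obstacle I anticipate is the non-vanishing and highest-weight property of $H(U)$ in the super setting; fortunately these are exactly the facts established in the Kac--Wakimoto reduction machinery (as used already in \cite{AKMPPIMRN} for the Lie algebra case), and the arguments go through verbatim for basic Lie superalgebras because the minimal grading \eqref{gradazione} and the structure of $F(A_{ch}), F(A_{ne})$ are identical in form. The only other subtle point is to ensure that $H(U)$ sits in the category in which $W_k(\g,\theta)$-rationality guarantees semisimplicity; this is handled by the standard fact that the reduction of an object of $KL_k^{fin}$ is an ordinary $W_k(\g,\theta)$-module.
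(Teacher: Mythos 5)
Your proposal is correct and follows essentially the same route as the paper: reduce via Theorem \ref{TT} to showing every highest weight module in $KL_k^{fin}$ is irreducible, observe that $H(U)$ is a nonzero highest weight $W_k(\g,\theta)$-module (nonzero precisely because $k\notin\mathbb Z_{\ge 0}$, by Arakawa's theorem), conclude irreducibility of $H(U)$ from rationality, and finish with Lemma \ref{kriterij}. The extra detail you supply on non-vanishing and on why an indecomposable completely reducible module is irreducible is consistent with, and merely expands on, the paper's argument.
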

\begin{proof} By Theorem \ref{TT} it suffices to prove that every highest weight $V_k(\g)$--module  in $KL^{fin}_k$  is irreducible. Let $U\ne 0$ be such a module. Then $H(U)$ is a highest weight module for $W_{k}(\g, \theta)$, which is nonzero since  $k\notin \mathbb Z_{\ge 0}$ . If $W_k(\g,\theta)$ is rational then $H(U)$ is irreducible, hence $U$ is irreducible by Lemma \ref{kriterij}.
\end{proof}
\subsection{Collapsing case}
 
 \begin{definition}\label{cc} Write $(\g_1,k_1)\triangleright (\g_2,k_2)$ if $H(V_{k_1}(\g_1,\theta))=V_{k_2}(\g_2)$. We call a sequence $(\g,k)\triangleright(\g_1,k_1)\triangleright\ldots  \triangleright(\g_n,k_n)$  a collapsing chain for 
$(\g,k)$.
 \end{definition}
 
 \noindent It follows from \cite[Main Theorem]{Araduke}  that if $(\g_1,k_1)\triangleright (\g_2,k_2)$, then $k_1\notin \mathbb Z_{\geq 0}$.
 \begin{prop}\label{nuovofondamentale}
\label{KtoK} Assume that $H(V_k(\g))=V_{k'}(\g^\natural)$. If $M$ is a highest weight module in $KL_k$, then  $H(M)\in KL^{fin}_{k'}$.
\end{prop}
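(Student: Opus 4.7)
The plan is to verify, one at a time, the three conditions defining $KL^{fin}_{k'}$ on $H(M)$: the $V_{k'}(\g^\natural)$--module structure is immediate from the functoriality of $H$ and the collapsing hypothesis $W_k(\g,\theta)=V_{k'}(\g^\natural)$, so one must check (i) local $\g^\natural$--finiteness, (ii) a generalized $L(0)$--eigenspace decomposition with eigenvalues bounded below, and (iii) semisimplicity of the affine Cartan of $\g^\natural$. The natural strategy is to establish each property first on the BRST complex $\mathcal C^M = M\otimes F(A_{ch})\otimes F(A_{ne})$ and then transfer it to $H(M)=H^0(\mathcal C^M, d_0)$ as a subquotient.

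For (i), the zero modes of elements of $\g^\natural\subset\g_0$ act on $\mathcal C^M$ diagonally on the three tensor factors, and the differential $d_0$ is $\g^\natural$--equivariant. On $M$ one recovers the action of $\g^\natural$ coming from $\g^\natural\subset\g$, under which $M$ is locally finite by the hypothesis $M\in\Kl$. On $F(A_{ch})$ and $F(A_{ne})$ the action is induced by the adjoint action of $\g^\natural$ on the finite--dimensional superspaces $\g_{1/2}\oplus\g_1\oplus(\g_{1/2}\oplus\g_1)^*$ and $\g_{1/2}$, so both fermionic vertex algebras are locally $\g^\natural$--finite as direct sums of finite--dimensional $\g^\natural$--modules. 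Hence $\mathcal C^M$ is locally $\g^\natural$--finite, and local $\g^\natural$--finiteness descends to the subquotient $H(M)$.

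For (ii) and (iii), since $M$ is a highest weight module the actions of $\h$ (hence of $\h^\natural$) and of $L_\g(0)$ on $M$ are semisimple, with $L_\g(0)$--eigenvalues bounded below. Both fermionic factors are naturally $\h^\natural$--graded and carry a standard conformal grading that is nonnegative on the vacuum Fock modules. Using the explicit form of the modified conformal vector of the BRST complex from \cite{KW2}, which differs from the naive Sugawara sum by a correction involving $x$, I would verify that the resulting $L(0)$ acts semisimply on $\mathcal C^M$ with eigenvalues bounded below; the semisimplicity of $\h^\natural$ on $\mathcal C^M$ is transparent from the weight--space decompositions of the three factors. Since $d_0$ preserves both the $L(0)$--grading and the $\h^\natural$--grading, these properties pass to $H(M)$.

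The main delicate point will be (ii): after the shift intrinsic to the BRST conformal vector, one must confirm that $L(0)$--eigenvalues on $\mathcal C^M$ remain bounded below. This combines the finite dimensionality of $M_{top}$ (which bounds the spectrum of the zero mode $x_{(0)}$ responsible for the shift) with the nonnegativity of the fermionic conformal gradings on the vacuum, and should follow directly from the formulas in \cite{KW2}.
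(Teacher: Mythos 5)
Your proposal is correct and, on the crucial point --- local $\g^\natural$--finiteness of $H(M)$ --- it follows essentially the same route as the paper: the action of $v\in\g^\natural$ on $\mathcal C^M$ is $J^{\{v\}}_0=v_{(0)}+(\text{operators on }F(A_{ch})\otimes F(A_{ne}))$, the fermionic factor splits into finite--dimensional pieces preserved by this action (the paper makes this precise by noting that $J^{\{v\}}-v$ is primary of conformal weight $1$, so its zero mode commutes with $L_F(0)$, whose eigenspaces are finite dimensional), and local finiteness of the tensor product then descends to the subquotient $H(M)$. Where you diverge is in handling the semisimplicity of $\widehat\h^\natural$ and the lower bound on the $L(0)$--spectrum: the paper simply invokes the result of \cite{KW2} that $H(M)$ is a highest weight module over $W_k(\g,\theta)$, which yields both properties at once, whereas you propose to verify them on the whole complex $\mathcal C^M$ (tracking the shift $\partial x$ in the BRST conformal vector) and pass to cohomology. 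Your route is workable --- the shifted $L(0)$ does turn out to be bounded below on $M\otimes F$ for $M$ highest weight --- but it is the more laborious option, and the computation you defer to ``the formulas in \cite{KW2}'' is exactly the part the citation lets you skip; if you keep your version, that boundedness check needs to be written out rather than asserted.
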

\begin{proof} By \cite{KW2}, we know that $H(M)$ is a highest weight module, in particular  $\widehat\h^\natural$ acts semisimply and the $L_{\g^\natural}(0)$-eigenvalues are bounded below. It remains only  to show that the action of $\g^\natural$ on the complex $\mathcal C^M$ \eqref{Cm} is locally finite. If $v\in \g^\natural$ the the action of $v$ on the complex is given by  $J^{\{v\}}_0$, where
\begin{equation}\label{actionJ}J^{\{v\}}=v+\sum_{\a,\be\in S_+}(-1)^{p(\a)} c_{\a\be}(v):\varphi_\a\varphi^\be: +(-1)^{p(v)/2}\sum_{\a\in S_{1/2}}:\Phi^\a\Phi_{[u_\a,v]} :.
\end{equation}
 $J^{\{v\}}(0)$ acts on the tensor product $M \otimes F$, where $F=F(A_{ch})\otimes F(A_{ne})$ is a Clifford vertex algebra. The action of $v(0)$ on $M$ is locally finite, since the action of $\g$ is locally finite. More precisely, $M$ admits a ${\Z}_{\ge 0}$ gradation:
$$ M = \bigoplus_{\ell \in   {\Z}_{\ge  0}  } M_{\ell}, \quad \g \ \mbox{acts locally finite on} \ M_{\ell}.$$
 On the other hand $F$ admits a Virasoro vector $L_F$ and the corresponding eigenvalue decomposition  
$$ F = \bigoplus_{\ell \in   \tfrac{1}{2}\Z_{\ge  0}  } F_{\ell}, $$
has finite-dimensional eigenspaces. By \eqref{actionJ}, $J^{\{v\}}-v$ is primary of conformal weight $1$, hence $J^{\{v\}}(0)-v(0)$ commutes with $L_F(0)$. In particular it stabilizes the eigenspaces of  $L_F$, so that we have  a gradation:
$$  M \otimes F = \bigoplus_{\ell \in {\half\Z}_{\ge 0}  }  (M \otimes F)_{\ell}, $$
where $$(M \otimes F)_{\ell} = \bigoplus_{i = 0} ^{\ell}  M_{i} \otimes F_{\ell-i}$$
 and each $(M \otimes F)_{\ell}$ is a $\g ^{\natural}$--module. 
 Since each $M_{i} $ is $\g$--locally finite and  $F_{j}$ is finite-dimensional, we conclude that $(M \otimes F)_{\ell}$ is $\g^{\natural}$--locally finite. The claim follows.

\end{proof}

\begin{thm}\label{maincoll} Let  $(\g,k)\triangleright(\g_1,k_1)\triangleright\ldots  \triangleright(\g_n,k_n)$ be a collapsing chain.  Assume that $KL^{fin}_{k_n}(\g_n)$ is  semisimple. Then $KL^{fin}_k$ is semisimple. In particular this happens when $V_{k_n}(\g_n)$ is  rational or admissible.
\end{thm}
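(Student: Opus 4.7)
The plan is to argue by induction on the length $n$ of the collapsing chain, combining Proposition \ref{nuovofondamentale}, Lemma \ref{kriterij}, and Theorem \ref{TT}.

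For the base case $n=1$ we have a single collapse $(\g,k)\triangleright (\g_1,k_1)$, so $W_k(\g,\theta)\cong V_{k_1}(\g_1)$, and by hypothesis $KL^{fin}_{k_1}(\g_1)$ is semisimple. Let $U$ be any nonzero highest weight $V_k(\g)$-module in $KL^{fin}_k$. By Proposition \ref{nuovofondamentale}, $H(U)$ belongs to $KL^{fin}_{k_1}(\g_1)$; the remark after Definition \ref{cc} gives $k\notin\mathbb Z_{\ge 0}$, so that $H(U)$ is in fact a nonzero highest weight $V_{k_1}(\g_1)$-module. Any highest weight module is cyclic with one-dimensional top weight space, hence indecomposable, and in a semisimple category indecomposable objects are simple; therefore $H(U)$ is irreducible. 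Lemma \ref{kriterij} then forces every highest weight module in $KL^{fin}_k$ to be irreducible, and Theorem \ref{TT} concludes that $KL^{fin}_k$ is semisimple.

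For $n\ge 2$, I apply the induction hypothesis to the subchain $(\g_1,k_1)\triangleright\cdots\triangleright(\g_n,k_n)$ of length $n-1$ to obtain semisimplicity of $KL^{fin}_{k_1}(\g_1)$, and then apply the base case to the single step $(\g,k)\triangleright (\g_1,k_1)$. For the ``in particular'' assertion, rationality of $V_{k_n}(\g_n)$ immediately forces complete reducibility of all its modules, hence semisimplicity of $KL^{fin}_{k_n}(\g_n)$, while in the admissible case one invokes the known semisimplicity of $KL$ at admissible levels (e.g.\ Arakawa's theorem in the Lie algebra case).

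The main technical point I foresee is ensuring the hypotheses of Lemma \ref{kriterij} at each step of the chain: one needs each level playing the role of $k$ in that lemma to lie in $\mathbb Q\setminus\mathbb Z_{\ge 0}$. The condition $k_i\notin\mathbb Z_{\ge 0}$ for $i<n$ is guaranteed by the cited result of \cite{Araduke} recalled after Definition \ref{cc}, since each such $k_i$ is the left entry of a further collapse; rationality of the intermediate $k_i$ can be read off from the explicit defining polynomials $p(k)$ in Table 1. The remaining auxiliary facts---that $H(U)\ne 0$ for nonzero highest weight $U$ at non--nonnegative integer levels, and that highest weight objects in a semisimple category are simple---are standard.
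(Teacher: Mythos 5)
Your proof is correct and follows essentially the same route as the paper's: induction on the chain length, using Proposition \ref{nuovofondamentale} to place $H(U)$ in $KL^{fin}_{k_1}(\g_1)$, the results of \cite{Araduke} and \cite{KW2} for nonvanishing and the highest weight property, then Lemma \ref{kriterij} and Theorem \ref{TT} to conclude. Your explicit attention to the hypothesis $k\in\mathbb Q\setminus\mathbb Z_{\ge 0}$ of Lemma \ref{kriterij} is a reasonable point of care that the paper leaves implicit, but it does not change the argument.
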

\begin{proof} We proceed by induction on the length of the collapsing chain. The base case $n=0$ is obvious. Now assume $n>0$. 
  First remark that every highest weight module $U$  in $KL^{fin}_k$ is irreducible. Indeed if $U$ is such a module, then $H(U)$ is a  $V_{k_1}(\g_1)$--module    in $KL^{fin}_{k_1}$ by Proposition \ref{nuovofondamentale} and non-zero and highest weight by \cite{Araduke}, \cite{KW2}.
  By induction  $KL^{fin}_{k_1}$ is semisimple, hence $H(U)$ is irreducible. By Lemma \ref{kriterij} $U$ is irreducible, so we are in the hypothesis of Theorem \ref{TT} and therefore we can conclude that $KL^{fin}_k$ is semisimple. In particular, if $V_{k_n}(\g_n)$ is  rational or admissible,
then $KL^{fin}_{k_n}(\g_n)$ is semisimple (by definition in the rational case, by  Arakawa's Main  Theorem from 
\cite{Ar4} in the admissible case).
\end{proof}

\begin{lem}\label{IMRNso} There is a collapsing chain $(so(m),\tfrac{4-m}{2})\triangleright \ldots  \triangleright(\g',k')$
with 
\begin{equation}\label{so}
(\g',k')=\begin{cases}\C\quad&\text{if $m\equiv 0, 1 \mod 4$},\\
M(1)\quad&\text{if $m\equiv 2 \mod 4$},\\
(sl(2),1)\quad&\text{if $m\equiv 3 \mod 4$.}
\end{cases}\end{equation}\end{lem}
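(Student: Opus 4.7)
The plan is to prove the lemma by induction on $m$, constructing the chain step by step via iterated Hamiltonian reduction at the minimal nilpotent of $so(m)$. The essential input is Table~1 of the paper: its $osp(m|n)$ row with $n=0$ shows that $k=(4-m)/2$ is a collapsing level for $so(m)$, so that by Definition~\ref{CL}
$$W_{(4-m)/2}(so(m),\theta)\cong V^{\beta_k}(\g^\natural),$$
where $\g^\natural\cong sl(2)\oplus so(m-4)$ is the centralizer of the $sl_2$-triple associated to the highest root $\theta=\epsilon_1+\epsilon_2$ of $so(m)$ (the $sl(2)$ factor corresponding to the orthogonal direction $\epsilon_1-\epsilon_2$, and $so(m-4)$ to the roots supported on $\epsilon_i$, $i\geq 3$).

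For the inductive step with $m\geq 8$ one uses the explicit formula for $\beta_k$, together with the identity $k+h^\vee/2=1$ at this collapsing level, to compute the induced level on each simple summand of $\g^\natural$. Since $\kappa_0$ restricts summand-wise to the Killing forms of $sl(2)$ and $so(m-4)$, the arithmetic yields a trivial level on the $sl(2)$-summand (whose simple affine vertex algebra is $\C$) and level $(4-(m-4))/2$ on the $so(m-4)$-summand, which is exactly the collapsing level for $so(m-4)$. This produces the chain step
$$(so(m),\tfrac{4-m}{2})\triangleright (so(m-4),\tfrac{4-(m-4)}{2}),$$
and the inductive hypothesis applied to $so(m-4)$ extends this to a full chain terminating at the base case determined by $(m-4)\pmod 4=m\pmod 4$.

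The base cases are verified directly for $m\leq 7$: for $m\equiv 0\pmod 4$ one reaches $so(4)\cong sl(2)\oplus sl(2)$ at level $0$, giving $\C$; for $m\equiv 1\pmod 4$ one reaches $so(5)$, whose reduction leaves $sl(2)$ at level $0$, again giving $\C$; for $m\equiv 2\pmod 4$ one reaches $so(6)$, whose reduction produces the abelian $so(2)$ at a nontrivial scalar level, giving the Heisenberg $M(1)$; and for $m\equiv 3\pmod 4$ one reaches $so(7)$, whose reduction leaves $sl(2)\cong so(3)$ at the induced level, which in the standard $sl(2)$-normalization is $1$, giving $(sl(2),1)$. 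The main obstacle I expect is the precise level computation on the two summands of $\g^\natural$, in particular the careful normalization conversion between $(\cdot|\cdot)_{so(m)}|_{\g^\natural_i}$ and the standard $(\theta_i|\theta_i)=2$ normalization on each subalgebra. This matters most at the small base case $so(3)\subset so(7)$, where the embedded root $\epsilon_3$ is a short root of $so(7)$ and the factor of $2$ discrepancy between the two normalizations must be combined exactly with the Killing contribution in $\beta_k$ to recover the claimed level $1$ on the terminal $sl(2)$; analogously for $so(2)\subset so(6)$ one must check that the induced scalar form produces $M(1)$ rather than a rescaled lattice or Heisenberg variant. Once these normalizations are pinned down, the induction closes cleanly.
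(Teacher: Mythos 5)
Your proof follows essentially the same route as the paper: the inductive step $(so(m),\tfrac{4-m}{2})\triangleright(so(m-4),\tfrac{8-m}{2})$ in steps of four, followed by a direct check of the low-rank base cases (where indeed $so(8)$ gives $\C$, $so(5)$ gives $\C$, $so(6)\cong sl(4)$ at level $-1$ gives $M(1)$, and $so(7)$ gives $sl(2)$ at level $1$ after the length-normalization doubling you correctly flag); the only difference is that the paper reads all of this off Table 2 (precomputed in \cite{AKMPP-JA}) instead of rederiving the levels on the two summands of $\g^\natural\cong sl(2)\oplus so(m-4)$ from the $\beta_k$ formula. One small imprecision: at a collapsing level $W_k(\g,\theta)$ equals the (simple) image $\mathcal V_k(\g^\natural)$ of $V^{\beta_k}(\g^\natural)$, not the universal algebra $V^{\beta_k}(\g^\natural)$ itself, and this simplicity is what lets the chain continue.
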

\begin{proof} Using Table 2 below, one sees  that for $m\gg 0$
$$H(V_{\tfrac{4-m}{2}}(so(m)))=V_{\tfrac{8-m}{2}}(so(m-4)).$$
Since $\tfrac{8-m}{2}$ is  again a collapsing level for $so(m-4)$, by  induction on $m$ we are reduced to the cases when $m=4,5,6,7$, where one concludes using  Table 2  once again.
\end{proof}

\begin{prop} The following are  collapsing chains
\begin{align}
&\label{a}(F(4),-1)\triangleright (D(2,1;2),1/2)= (D(2,1;1/2),1/2)\triangleright (sl(2),-4/3)\\ 
&\label{b}(F(4),2/3)\triangleright (so(7),-2)\triangleright (sl(2),-1/2) \,\,\,\\\
&\label{c}(G(3),1/2)\triangleright (G_2,-5/3)\triangleright  \mathbb C\,\,\\
&\label{l}(spo(2|3),-3/4)\triangleright (sl(2),1)\\
&\label{m}(spo(2|1),-5/4)\triangleright\mathbb C\\
&\label{e}m\ne n+1, n\geq 2,\,(spo(n|m),-1/2)\triangleright  \mathbb C\\
&\label{f} 
m \text{ odd}: \\
&(spo(n|m), \tfrac{m-n-4}{4})\triangleright  (spo(n-2|m),\tfrac{m-n-2}{4})\triangleright\ldots
(spo(2|m),\tfrac{m-6}{4})\triangleright  (so(m),\tfrac{4-m}{2}) \triangleright  \eqref{so}\notag\\
&\label{g}(osp(n+8|n),-2)\triangleright  \C\\
&\label{h} m\ne n+4, n+8, m\ge 4: \,\,(osp(m|n),-2)\triangleright  (sl(2),\tfrac{m-n-8}{2})\\
&\label{x1}(osp(3|n),n+1)=(spo(n|3),-\tfrac{n+1}{4})\triangleright \ldots\triangleright (spo(2|3),-\tfrac{3}{4})\triangleright (sl(2),1)\\
&\label{x3} (osp(5|n),\tfrac{n-1}{2})\triangleright(osp(1|n),\tfrac{-n-3}{4})=(spo(n|1),\tfrac{-n-3}{4})\triangleright \ldots\triangleright (spo(2|1),-\tfrac{5}{4})\triangleright\C \\
&\label{x5}(osp(7|n),\tfrac{n-3}{2})\triangleright(osp(3|n),1+n)\triangleright\eqref{x1}\end{align}\begin{align}
&\label{hh} m\text{ odd}, m\ge 8: \,\,(osp(m|n), \tfrac{n-m+4}{2})\triangleright  (osp(m-4|n),\tfrac{8-m+n}{2})\triangleright\ldots\triangleright \eqref{x3}\text{ or }\eqref{x5}
\\
&{\label{hhh} m\text{ even}, m>n+4, m\equiv n\mod4: \,\,(osp(m|n), \tfrac{n-m+4}{2})\triangleright  (osp(m-4|n),\tfrac{8-m+n}{2})\triangleright\ldots\triangleright\eqref{g}}\ \\
&{\label{hhhh} m\text{ even}, m>n+4, m\equiv n+2\mod4:}\\ & (osp(m|n), \tfrac{n-m+4}{2})\triangleright  (osp(m-4|n),\tfrac{8-m+n}{2})\triangleright\ldots\triangleright
(osp(n+6|n),-1)\triangleright (osp(n+2|n),1)\notag\\
&\label{o}(psl(m|m),-1)\triangleright\mathbb C
\\&\label{oo}m\ne n,n+1,n+2, m\ge 2, (sl(m|n),-1)\triangleright (M(1), 1)\\
&\label{qq} n\text{ odd }(sl(2|n),n/2-1)\triangleright(sl(n),-n/2) \triangleright\ldots \triangleright (sl(3),-3/2)\triangleright\mathbb C
\\ &n\text{ even } \label{r}\\
&\notag(sl(3|n),\tfrac{n-3}{2})\triangleright (sl(1|n),\tfrac{1-n}{2})= (sl(n|1),\tfrac{1-n}{2}) \triangleright   (sl(n-2|1),\tfrac{3-n}{2})   \triangleright   \cdots   \triangleright(sl(2|1),-\tfrac{1}{2})\triangleright\C \\
&\text{ $3< m,n, m$ even, $n$ odd }\label{tttt}\\
&\notag(sl(m|n),\tfrac{n-m}{2})\triangleright (sl(m-2|n),\tfrac{n-m}{2}+1)\triangleright \ldots\triangleright(sl(2|n),\tfrac{n}{2}-1)\triangleright\eqref{qq}\\
&\text{ $3< m,n, m$ odd, $n$ even }\label{u}\\\notag&(sl(m|n),\tfrac{n-m}{2})\triangleright (sl(m-2|n),\tfrac{n-m}{2}+1)\triangleright \ldots\triangleright(sl(3|n),\tfrac{n-3}{2})\triangleright\eqref{r}\\
& \text{ $3< n<m,\  m\equiv n\mod 2$ \label{t}}\\
&(sl(m|n),\tfrac{n-m}{2})\triangleright (sl(m-2|n),\tfrac{n-m}{2}+1)\triangleright \ldots\triangleright (sl(n+2|n),-1)=(sl(n|n+2),1)\notag\\
&\label{d21a1}(D(2,1;a),a)\triangleright (sl(2),-\tfrac{1+2a}{1+a})\\
&\label{d21a2}(D(2,1;a),-a-1)\triangleright (sl(2),-\tfrac{1+2a}{a})
\end{align}
\end{prop}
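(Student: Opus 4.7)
The plan is to verify each chain arrow-by-arrow, checking for every link $(\g_i,k_i)\triangleright(\g_{i+1},k_{i+1})$ two facts: (i) $k_i$ is a collapsing level for $\g_i$, and (ii) the affine subalgebra $\mathcal V_{k_i}(\g_i^\natural)\subset W_{k_i}(\g_i,\theta)$ is isomorphic to $V_{k_{i+1}}(\g_{i+1})$, with the convention that $\g_{i+1}=\C$ or $M(1)$ when the image is one-dimensional or a rank-one Heisenberg. Item (i) should be a one-line evaluation of the polynomial $p(k_i)$ from Table 1; for example the first arrows of \eqref{oo}--\eqref{t} use $k_i=-1$ or $k_i=(n-m)/2$, both roots of $(k+1)(k+(m-n)/2)$; the first arrow of \eqref{a} uses $(k+3/2)(k+1)$; and so on for the remaining first arrows.

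For (ii) I would invoke the standard description of $\g^\natural$ for each basic Lie superalgebra (see \cite{KW2}): for the classical families $\g^\natural$ is again a direct sum of classical basic Lie superalgebras of strictly smaller rank, possibly with an abelian summand, while for the exceptional families $\g^\natural$ can be read off directly from Table 1. The level $k_{i+1}$ on each simple summand is then computed by restricting
\[
\beta_{k_i}(a,b)=\tfrac14\bigl((k_i+h^\vee/2)(a|b)-\tfrac14\kappa_0(a,b)\bigr)
\]
to that summand and comparing with the invariant form normalized by $(\theta'|\theta')=2$. The Killing-form restrictions $\kappa_0|_{\g_i^\natural}$ that enter here were computed in the derivation of Table 1 in \cite{AKMPP-JA}, and I would import those values rather than redo the calculation.

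Once the single-arrow check is in place, the chains that descend through a classical series --- \eqref{f}, \eqref{h}, \eqref{x1}--\eqref{hhhh}, \eqref{qq}--\eqref{t} --- reduce to induction on the rank, since the level produced by one collapse is automatically of the form required for the next arrow to be collapsing. Lemma \ref{IMRNso} is the prototype and terminates at the three possibilities of \eqref{so} according to $m\bmod 4$. The exceptional chains \eqref{a}--\eqref{c} and \eqref{d21a1}--\eqref{d21a2} are short and I would treat them individually; in \eqref{a} I would also use the isomorphism $D(2,1;a)\cong D(2,1;a^{-1})$ which explains the identification $D(2,1;2)=D(2,1;1/2)$.

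The main obstacle will be bookkeeping. Three recurring subtleties need to be tracked: the normalization of invariant forms through successive collapses; the choice of which simple summand of $\g^\natural$ the chain is continued on when $\g^\natural$ splits (as for $F(4)$, $D(2,1;a)$, and those $osp$ chains where an $sl(2)$ factor is ejected at each step); and the nature of abelian summands in $\g^\natural$, which contribute either $\C$ or the Heisenberg $M(1)$ according to whether $\beta_{k_i}$ is degenerate on them. This last point in particular should explain why \eqref{hhh} and \eqref{hhhh} split into sub-series distinguished by the residue of $m\bmod 4$, and why the $so(m)$ chain terminates in three different places in \eqref{so}.
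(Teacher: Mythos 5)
Your strategy --- verify each arrow from the single-collapse data of \cite{AKMPP-JA} and then chain the arrows by induction on the rank --- is essentially the paper's own: its Table 2 is precisely the tabulation of the pairs $(k,k')$ with $H(V_k(\g))=V_{k'}(\g^\natural)$, compiled from \cite[Tables 5,6,7]{AKMPP-JA}, and each case of the proposition is proved by citing the relevant lines of that table and iterating.

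There is, however, one genuine omission in your plan. The relation $(\g_1,k_1)\triangleright(\g_2,k_2)$ is defined by $H(V_{k_1}(\g_1))=V_{k_2}(\g_2)$, and by Arakawa's theorem \cite{Araduke} the reduction $H(V_{k_1}(\g_1))$ \emph{vanishes} whenever $k_1\in\Z_{\ge 0}$, even if $k_1$ is a collapsing level with $\mathcal V_{k_1}(\g_1^\natural)\cong V_{k_2}(\g_2)$. So besides your checks (i) and (ii) you must verify, at every intermediate node of every chain, that the level is not a non-negative integer. This is not automatic, and it is exactly what the parity and congruence hypotheses in the statement are for: in \eqref{f} the intermediate levels $\tfrac{m-n-2j}{4}$ avoid $\Z_{\ge 0}$ only because $m$ is odd and $n$ even; in \eqref{qq} and \eqref{r} the hypotheses ``$n$ odd'' and ``$n$ even'' serve the same purpose; in \eqref{t} one instead observes that all levels involved are negative. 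Your inductive step, ``the level produced by one collapse is automatically of the form required for the next arrow to be collapsing,'' therefore proves too little: collapsing-ness of the next level does not by itself give a nonzero $H$, and without the non-integrality check a chain could terminate prematurely at a vanishing reduction. A smaller bookkeeping point in the same spirit: the recursion through line 8 of Table 2 excludes $spo(n|n+2)$ at level $-\tfrac12$, where one must branch to the separate collapse \eqref{e}; your induction should make such branch points explicit rather than subsume them under the generic step.
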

\begin{proof}The proof of the proposition is based on the  data in Table 2. This table is built up by using the data computed in 
\cite[Tables
5,6,7]{AKMPP-JA}.  Case \eqref{a} is proven by looking   at lines 19 and 
17 in Table 2. A similar direct analysis works in cases \eqref{b} (lines 21, 15), \eqref{c} (lines 23, 28), \eqref{l} (line 10), \eqref{m} (line 11), \eqref{e} (line 12),
\eqref{g} (line 16), \eqref{h} (line 15),   \eqref{x5} (line 13$'$), \eqref{hhh} and  \eqref{hhhh} (line 13),\eqref{o} (line 7), \eqref{oo} (line 6), \eqref{d21a1} (line 17) , \eqref{d21a2} (line 18).
\vskip5pt
\noindent{\bf Cases \eqref{f}}
Since $n$ is even and $m$ is odd,   $(m-n-2)/4$ is never a  integer, so $H(V_{(m-n-2)/4})\ne 0$; use line 8   up to arriving to $(spo(2|m),\tfrac{m-6}{4})$. Using line 8 prevents to consider the case $(spo(m-2,m),-\tfrac{1}{2})$: on the other hand  in this case we have  collapsing to $\C$ by line 12. If $m\geq 5$ by line 9 we arrive at $(so(m),\tfrac{4-m}{2})$ which collapses  according to Lemma  \ref{IMRNso}.
For $m=3$ we have $(spo(2|3),-\tfrac{3}{4})$ which collapses to $(sl(2),1)$ by line 10. For $m=1$ we have $(spo(2|1),-\tfrac{5}{4})$ which collapses to $\C$ by line 11.

vskip5pt
\noindent{\bf Case \eqref{x1}.}
Use  line  8 and \eqref{l}.
\vskip5pt
\noindent{\bf Case \eqref{x3}.}
Use   lines 14, 8 and \eqref{m}.
\vskip5pt
\noindent{\bf Case \eqref{hh}.}
Use line 13  up to arriving to \eqref{x3},  \eqref{x5}.
\vskip5pt
\noindent{\bf Case \eqref{qq}.} Use lines 4, 1, 3. Since $n$ is odd, all the levels involved are non-integral.
\vskip5pt
\noindent{\bf Case \eqref{r}.} Use lines 2, 1, 5. Since $n$ is even, all the levels involved are non-integral.
\vskip5pt
\noindent{\bf Cases \eqref{tttt}, \eqref{u}.} These cases are reduced to  \eqref{qq}, \eqref{r} by using \eqref{qq}, \eqref{r}.\vskip5pt
\noindent{\bf Cases \eqref{t}}  Use line 1 (note that the levels involved are negative, hence the functor $H$ is nonzero).

\vskip5pt
\end{proof}
\begin{table}\caption{}\label{Table2}
	
\noindent {\sl Values of $k$ and $k'$. Assume that $k \notin {\Z}_{\ge 0}$. 
\vskip 5pt

\centerline{\renewcommand{\arraystretch}{1.5}\begin{tabular}{c|c|c|c|c}
& $\g$&$V_{k'}(\g^\natural)$&$k$&$k'$\\
\hline
$1$& $sl(m|n)$, $m\neq n, m>3, m-2\ne n$&$V_{k'}(sl(m-2|n)))$&$\frac{n-m}{2}$&$\frac{n-m+2}{2}$\\ 
\hline
$2$& $sl(3|n)$, $n\ne3, n\ne 1, n\ne0$&$V_{k'}(sl(1|n)))$&$\frac{n-3}{2}$&$\frac{1-n}{2}$\\ 
\hline
$3$&$sl(3)$&$\C$&$-\frac{3}{2}$&$0$\\ 
\hline
$4$&$sl(2|n)$, $n\neq 2,n\ne1, n\ne 0$&$V_{k'}(sl(n)))$&$\frac{n-2}{2}$&$-\frac{n}{2}$\\ 
\hline
$5$&$sl(2|1)=spo(2|2)$&$\C$&$-\frac{1}{2}$&$0$\\ 
\hline
$6$&$sl(m|n)$, $m\neq n,n+1,n+2, m\ge2$&$M(1)$&$-1$&$1$\\ \hline
$7$&$psl(m|m),\,m\ge2$&$\C$& $-1$&$0$\\
\hline
$8$&$spo(n|m),m\ne n,n+2,n\ge 4$& $V_{k'}(spo(n-2|m))$ &$\frac{m-n-4}{4}$&$\frac{m-n-2}{4}$\\
\hline
$9$&$spo(2|m),m\ge  5$& $V_{k'}(so(m))$ &$\frac{m-6}{4}$&$\frac{4-
m}{2}$\\
\hline
$10$&$spo(2|3)$& $V_{k'}(sl(2))$ &$-\frac{3}{4}$&$1$\\
\hline
$11$&$spo(2|1)$& $\C$ &$-\frac{5}{4}$&$0$\\
\hline
$12$&$spo(n|m),m\ne n+1,n\ge 2$& $\C$ &$-1/2$&$0$\\
\hline
$13$&$osp(m|n),m\ne n,m\ne n+8,m\geq  8$&$V_{k'}(osp(m-4|n))$ &$\frac{n-m+4}{2}$&$\frac{8-m+n}{2}$
\\\hline
$13'$&$osp(7|n)$&$V_{k'}(osp(3|n))$ &$\frac{n-3}{2}$&$1+n$
\\\hline
$14$&$osp(m|n),n\ne m,0;4\le m\le 6$&$V_{k'}(osp(m-4|n))$ &$\frac{n-m+4}{2}$&$\frac{m-n-8}{4}$
\\\hline
$15$&$osp(m|n),m\ne n+4,n+8;m\geq 4$&$ V_{k'}(sl(2))$ &$-2$ &$\frac{m-n-8}{2}$\\
\hline
$16$&$osp(n+8|n),n\geq 0$&$\C$ &$-2$ &$0$\\
\hline
$17$&$D(2,1;a)$&$V_{k'}(sl(2))$&$a$ &$-\frac{1+2a}{1+a}$\\
\hline
$18$&$D(2,1;a)$&$V_{k'}(sl(2))$&$-a-1$ &$-\frac{1+2a}{a}$\\
\hline
 $19$&$F(4)$&$V_{k'}(D(2,1;2))$ &$-1$& $\frac{1}{2}$\\
\hline
$20$& $F(4)$&$\C$ &$-3/2$& $0$\\
\hline
$21$&$F(4)$&$V_{k'}(so(7))$&$\frac{2}{3}$ & $-2$\\\hline
$22$&$F(4)$&$\C$&$-\frac{2}{3}$ & $0$\\\hline
$23$&$G(3)$&$V_{k'}(G_2)$&$\frac{1}{2}$ & $-\frac{5}{3}$\\\hline
$24$&$G(3)$&$\C$&$-\frac{3}{4}$ & $0$\\\hline
$25$&$G(3)$&$V_{k'}(osp(3|2))$&$-\frac{2}{3}$ & $1$\\\hline
$26$&$G(3)$&$\C$&$-\frac{4}{3}$ & $0$\\\hline
$27$&$G_2$&$V_{k'}(sl(2))$&$-\frac{4}{3}$&$1$\\\hline
$28$&$G_2$&$\C$&$-\frac{5}{3}$&$0$
\end{tabular}}}

\end{table}

\begin{coro}\label{list} In cases \eqref{a}--\eqref{g}, \eqref{h} with $m-n\geq  5$,  \eqref{x1}--\eqref{t}  the category $KL^{fin}_k$ is semisimple.
If  $a \notin {\mathbb Q}$ or $a=\tfrac{q}{p}-1,p,q\in \mathbb Z_{\ge0},\, p\geq  1$ (resp. $a=-\tfrac{q}{p}$), the category $KL^{fin}_a$ (resp. $KL^{fin}_{-a-1}$) for $D(2,1; a)$ is semisimple.
\end{coro}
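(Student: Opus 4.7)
The plan is to apply Theorem \ref{maincoll} in each case, reducing semisimplicity of $\sKlf$ to that of $KL^{fin}_{k_n}(\g_n)$ at the endpoint of the collapsing chain exhibited in the preceding proposition. First I would identify, by reading off each chain \eqref{a}--\eqref{t}, that every terminal pair $(\g_n,k_n)$ falls into one of three patterns: (i) $\C$ or the Heisenberg vertex algebra $M(1)$, for which $KL^{fin}$ is trivially semisimple (objects are direct sums of finite-dimensional vector spaces or of Fock modules, respectively); (ii) $(sl(2),k')$ with $k'\in\Z_{\geq 0}$, where $V_{k'}(sl(2))$ is rational; (iii) $(sl(2),k')$ with $k'$ an admissible non-integer level for $\widehat{sl}(2)$, where semisimplicity of $KL^{fin}_{k'}(sl(2))$ follows from Arakawa's main theorem \cite{Ar4}. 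For example, the terminal level $-4/3$ of \eqref{a} has $-4/3+2=2/3$ with $\gcd(2,3)=1$ and $2\geq h^\vee(sl(2))=2$, so it is admissible. The restriction $m-n\geq 5$ in \eqref{h} is seen to be imposed precisely to force the resulting $k'=(m-n-8)/2$ into the admissible range.

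For the $D(2,1;a)$ cases one must additionally handle the possibility that $a$ is irrational. A direct computation with $u=1+a$ shows that the terminal $sl(2)$-level satisfies $k'+2 = 1/(1+a)$ for chain \eqref{d21a1} and $k'+2 = -1/a$ for \eqref{d21a2}. When $a \notin \QQ$, the level $k'$ is irrational; at such generic levels standard results (nondegeneracy of the Shapovalov form, simplicity of $V^{k'}(sl(2))$) imply that every Weyl module in $KL^{fin}_{k'}$ is irreducible, whence Theorem \ref{TT} gives semisimplicity without even needing Theorem \ref{maincoll}. When $a = q/p - 1$ (resp. $a = -q/p$) one obtains $k'+2 = p/q$, which for $p\geq 2$ is Kac-Wakimoto admissible for $\widehat{sl}(2)$ (and for $p=1, q=1$ collapses to a rational integer case), so Theorem \ref{maincoll} again applies.

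The hard part — if any — will be purely bookkeeping: one must verify for each of the many chains in the previous proposition that the terminal rational level truly falls in the Kac-Wakimoto admissible range, i.e.\ that the pair $(p,q)$ defined by $k'+2 = p/q$ is coprime with $p\geq 2$, or reduces to a non-negative integer, or is irrational. These checks are tedious but routine once Table \ref{Table2} and the structure of each chain \eqref{a}--\eqref{t} and \eqref{d21a1}, \eqref{d21a2} are laid side by side, and no new ideas beyond Theorem \ref{maincoll} and the classification of admissible levels for $\widehat{sl}(2)$ are required.
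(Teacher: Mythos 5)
Your overall strategy (reduce along the collapsing chains via Theorem \ref{maincoll} and check the terminal vertex algebra) is the paper's strategy, but your classification of the terminal pairs has a genuine gap, and two of your admissibility claims are false.

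First, it is not true that every chain terminates in $\C$, $M(1)$, or an $sl(2)$ at a rational or admissible level. The chain \eqref{hhhh} terminates at $(osp(n+2|n),1)$ and the chain \eqref{t} terminates at $(sl(n+2|n),-1)=(sl(n|n+2),1)$; these are affine vertex \emph{super}algebras that do not collapse further, so Theorem \ref{maincoll} alone cannot finish the argument. The paper handles $V_1(osp(n+2|n))$ by a separate argument: the conformal embedding $V_1(so(2m))\otimes V_{-1/2}(sp(n))\hookrightarrow V_1(osp(2m|n))$ from \cite{AMPP-AIM}, a fusion-rules analysis showing every highest weight module in $KL_k$ is irreducible, and then Theorem \ref{TT}; and it handles $V_{-1}(sl(n+2|n))$ by invoking the semisimplicity results of Section \ref{slmn-1}. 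Neither of these is ``bookkeeping''; they are the substantive content missing from your proposal.

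Second, your admissibility checks fail in exactly the borderline cases. In \eqref{h}, $m-n=5,6$ give terminal levels $k'=-3/2,-1$ for $sl(2)$, i.e.\ $k'+2=1/2$ or $1$, which are \emph{not} admissible (the numerator must be $\geq 2$); so the restriction $m-n\geq 5$ does not ``force $k'$ into the admissible range,'' and the paper must quote Creutzig--Yang \cite[Theorem 4.4.1]{CY} for these two values. Likewise for $D(2,1;a)$ with $a=\tfrac{q}{p}-1$ and $p=1$: then $k'+2=1/q$, which is not admissible for any $q\geq 1$ and is not ``a rational integer case''; here too the paper relies on \cite{CY}, while $p\geq 2$ is covered by \cite{AM95} and $a\notin\QQ$ by the generic-level result of \cite{KL} (your Shapovalov-form argument is an acceptable substitute for the last point). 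Without the Creutzig--Yang input and the two superalgebra terminal cases, your proof does not cover the full statement of the corollary.
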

\begin{proof} We use Theorem \ref{maincoll}. In cases \eqref{a}, \eqref{b}, 
 the final level in  the collapsing chain is admissible.
In  case \eqref{h} the final vertex algebra is rational if $m-n\geq  7$ is even and admissible if it is odd. 
 The cases $m-n=5,6$ are covered by \cite[Theorem 4.4.1]{CY}. In case \eqref{d21a1}, the vertex algebra $V_{k'}(sl(2))$ is 
\begin{itemize}
\item[(1)] rational or admissible if $a=\tfrac{q}{p}-1$, $p,q\in {\mathbb Z}_{\ge 0}$, $p \ge 2$, and $KL_{k'}$ is semi-simple by \cite{AM95}, 

\item[(2)] isomorphic to $V_{-2+1/q}(sl(2))$ if $p=1$, and $KL_{k'}$ is semi-simple by  \cite[Theorem 4.4.1]{CY}, 
\item[(3)] generic if $a \notin {\mathbb Q}$,  and $KL_{k'}$ is semi-simple by  \cite{KL}. 
\end{itemize}

 Similarly for \eqref{d21a2}. In all other cases { except $V_1(osp(n+2|n), V_1(sl(n|n+2))$ the final vertex algebra  in  the collapsing chain is rational.
In  the two above special cases,  semisimplicity of $KL_k^{fin}$ is given by the following arguments:} 
\begin{itemize}
\item  Recall from \cite[Proposition 4.8]{AMPP-AIM} that the following decomposition holds:
$$ V_1(osp(2 m \vert n)) = V_1 (so(2m)) \otimes V_{-1/2} (sp(n)) +  V_1 (\omega_1) \otimes V_{-1/2} (\omega_1),$$
and the  embedding $V_1 (so(2m)) \otimes V_{-1/2} (sp(n))\hookrightarrow V_1(osp(2 m \vert n)) $ is conformal. Since 
$V_1 (osp(2m))$ is rational and $V_{-1/2} (sp(2n)) $ is admissible, we can apply Proposition \ref{klf=kl} (2) below to obtain $\sKlf = \Kl$. 

Assume that $W$ is any highest weight $V_1(osp(2 m \vert n))$--module in $\Kl$. Then $W$ is a completely reducible  as $V_1 (so(2m)) \otimes V_{-1/2} (sp(n))$ and it contains an irreducible $V_1 (so(2m)) \otimes V_{-1/2} (sp(n))$--submodule isomorphic to the exactly one of the following modules
\bea 
M_0 &\cong & V_1 (so(2m)) \otimes V_{-1/2} (sp(n)),\nonumber \\
M_1&\cong & V_1 (\omega_1) \otimes V_{-1/2} (\omega_1),\nonumber \\
M_2 &\cong & V_1 (\omega_1) \otimes V_{-1/2} (sp(n)),\nonumber \\
M_3&\cong & V_1 (so(2m)) \otimes V_{-1/2} (\omega_1). \nonumber 
\eea

Using the fusion rules arguments as in the proof of \cite[Proposition 4.8]{AMPP-AIM} we easily get that $W$ is isomorphic to exactly one of the following two irreducible modules
 \bea
 W_0  \cong M_0 \oplus M_1, \ W_1 = M_2 \oplus M_3. \nonumber  
\eea
 
Therefore 
 $V_1(osp(2 m \vert n))$  has two  irreducible modules in $KL_k$ and every highest weight module in $\Kl$ is irreducible. Now  using Theorem \ref{TT} we have that $\sKlf$ is semisimple.
 In particular,  for $V_1(osp(n+2|n))$, the category $\sKlf = \Kl$ 
 is semisimple.

 \item  $V_1(sl(n|n+2)) = V_{-1} (sl(n+2|n))$, and {  by results of Section \ref{slmn-1}} we have  semisimplicity in $\sKlf$.
\end{itemize}

\end{proof}

\section{Category  $\Kl$ of $\g$--locally finite $V_k(\g)$--modules}
\label{sect-5}

We first  investigate some sufficient conditions to have  either $KL_k^{ss} = \sKlf$ or $\Kl = \sKlf$.

\begin{prop} \label{klf=kl}
\begin{itemize}
\item[(1)]  Assume that $\g_{\overline 0}$ is a semisimple Lie algebra. Then
$\sKlf=KL_k ^{ss}$.
\item[(2)]  Assume that there    is a  conformal embedding of  $V_{k_1} (\g_{\overline 0}) \hookrightarrow V_k(\g)$ and every module $W$  from  $\Kl$ is semisimple as a $V_{k_1} (\g_{\overline 0})$--module. Then $\sKlf= \Kl$.
  \end{itemize}
\end{prop}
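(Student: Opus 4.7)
\textbf{Part (1).} First I would observe that the inclusion $\sKlf \subseteq KL_k^{ss}$ holds in full generality: since $d$ acts on any weak module as $-L_\g(0)$, semisimple action of $\ha$ automatically forces semisimple action of $L_\g(0)$. For the converse, under the assumption that $\g_{\bar 0}$ is semisimple, the plan is to pick $M \in KL_k^{ss}$ and verify semisimplicity of each summand of $\ha = \h \oplus \C K \oplus \C d$ separately. $K$ acts as $k\cdot\mathrm{Id}$, $d$ acts as $-L_\g(0)$ which is semisimple by hypothesis, so only $\h$ requires work. Here I would invoke condition (1) of Definition \ref{KLK}: every $v \in M$ lies in a finite-dimensional $\g$-submodule $V$, and since $\g_{\bar 0}$ is semisimple, $V$ is completely reducible as a $\g_{\bar 0}$-module, so $\h$ (a Cartan subalgebra of $\g_{\bar 0}$) acts diagonalizably on $V$. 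Taking the union of such $V$'s then gives diagonalizable action of $\h$ on $M$.

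\textbf{Part (2).} The inclusion $\sKlf \subseteq \Kl$ is automatic. For the converse, starting from $M \in \Kl$, the hypothesis provides a decomposition $M = \bigoplus_i M_i$ into irreducible $V_{k_1}(\g_{\bar 0})$-modules. First I would verify that each $M_i$ belongs to $KL^{k_1}(\g_{\bar 0})$: local finiteness as a $\g_{\bar 0}$-module is inherited from that of $M$ as a $\g$-module, while the conformality of the embedding ensures $L_{V_{k_1}(\g_{\bar 0})}(0) = L_\g(0)$, transferring the boundedness of conformal weights. Next I would argue that each irreducible summand $M_i$ is a highest weight module on which $\ha$ acts semisimply; combined with the identification $L_\g(0) = L_{V_{k_1}(\g_{\bar 0})}(0)$, this yields semisimple action of both $\h$ and $L_\g(0)$ on each $M_i$, and therefore on $M$. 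Together with $K$ acting as $k \cdot \mathrm{Id}$ and $d = -L_\g(0)$, this delivers $M \in \sKlf$.

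\textbf{Main obstacle.} The only delicate point is in part (2): showing that an irreducible object of $KL^{k_1}(\g_{\bar 0})$ has $\ha$ acting semisimply when $\g_{\bar 0}$ is only reductive. My plan is to split $\g_{\bar 0} = [\g_{\bar 0}, \g_{\bar 0}] \oplus \mathfrak z(\g_{\bar 0})$ and treat the two factors separately. On the semisimple factor, part (1) applies directly. On the abelian factor, irreducible modules of the corresponding Heisenberg vertex algebra are Fock modules on which the Cartan acts by a scalar and $L(0)$ acts semisimply. Combining via the tensor product structure of irreducible modules for $V_{k_1}(\g_{\bar 0})$ yields the required semisimplicity of $\ha$ on each $M_i$.
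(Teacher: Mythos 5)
Your proposal is correct and follows essentially the same route as the paper: for (1) you reduce to complete reducibility of finite-dimensional $\g_{\bar 0}$-modules to get semisimplicity of $\h$, and for (2) you decompose $M$ into irreducible $V_{k_1}(\g_{\bar 0})$-modules and use conformality of the embedding to transfer semisimple action of $\h$ and $L(0)$. The only difference is that you spell out the reductive case (splitting off the center of $\g_{\bar 0}$ and invoking Fock modules for the Heisenberg part), a detail the paper leaves implicit in its assertion that the irreducible summands are highest weight modules.
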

\begin{proof} 
Consider  case (1). Assume that $W$ is any module from $KL_k ^{ss}$. We need to show that $\h$  acts semisimply on $W$. 
 Each $L(0)$--eigenspace of $W$ is a sum of finite-dimensional $\g$--modules, therefore $W$ is a sum of finite-dimensional $\g$--module.
  Since there is an  embedding $V_{k_1} (\g_{\overline 0}) \hookrightarrow V_k(\g)$  and $\g_{\overline 0}$ is  semisimple,
  we conclude that $W$ is a direct sum of finite-dimensional  $\g_{\overline 0}$--modules. Since the action of the Cartan subalgebra $\h$ is obtained by the action of  operators from 
  $V_{k_1} (\g_{\overline 0}) $, we conclude that these operators act semisimply and therefore  $W$ is in $\sKlf$.

Now we consider the   case (2).  Let $W$ be a module from $\Kl$.
We have directly that $W$ is a semisimple as  $V_{k_1} (\g_{\overline 0})$--module.
 So $W$ is a direct sum of irreducible $V_{k_1} (\g_{\overline 0})$--modules in $KL_{k_1}$, which are highest weight modules, and 
therefore, since the embedding is conformal,  we get that 
 $\h$ and $L(0)$ must act semisimply. The claim follows.
\end{proof}
We have the following consequence:
\begin{coro} \label{largest}
Assume that the condition (2)   of Proposition \ref{klf=kl} holds and that
\begin{itemize}
\item[(3)] Any highest weight $V_k(\g)$--module in $\Kl$ is irreducible.
\end{itemize}
Then $\Kl$ is semisimple.
\end{coro}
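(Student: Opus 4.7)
The plan is to reduce the claim directly to results already established in the paper, namely Proposition \ref{klf=kl}(2) and Theorem \ref{TT}. First I would invoke Proposition \ref{klf=kl}(2): since condition (2) is assumed, it gives the identification $\sKlf = \Kl$, so every object of $\Kl$ is automatically $\ha$-semisimple and lies in $\sKlf$. This is the key structural input that turns the general category $\Kl$ (which in the super setting can host indecomposable or logarithmic modules) into the better-behaved subcategory on which the machinery of Section \ref{sect-4} applies.

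Next I would feed assumption (3) into Theorem \ref{TT}. Because $\sKlf \subseteq \Kl$, every highest weight $V_k(\g)$-module in $\sKlf$ is in particular a highest weight module in $\Kl$, hence irreducible by (3). This is precisely the hypothesis of Theorem \ref{TT} applied to the basic Lie superalgebra $\g$, so we conclude that $\sKlf$ is semisimple. Combining $\Kl = \sKlf$ with the semisimplicity of $\sKlf$ then yields that $\Kl$ itself is semisimple, which is the desired conclusion.

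There is essentially no obstacle to this argument: all the work has been done in Proposition \ref{klf=kl}(2) (which moves $\Kl$ into $\sKlf$) and in Theorem \ref{TT} (which furnishes semisimplicity of $\sKlf$ from irreducibility of highest weight modules). The role of the corollary is simply to record the combination of these two statements under the form in which it will be applied to concrete examples such as $V_{-1}(sl(m\vert1))$ and $V_{-(m+1)/(m+2)}(sl(2\vert1))$, where the verification of the hypotheses (2) and (3) is carried out in subsequent sections via explicit conformal embeddings and fusion-rule arguments.
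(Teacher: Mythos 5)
Your proposal is correct and follows essentially the same route as the paper: condition (2) via Proposition \ref{klf=kl} forces $\Kl=\sKlf$, and then hypothesis (3) puts you in the setting of Theorem \ref{TT}, which gives semisimplicity of $\sKlf=\Kl$. The only difference is that you spell out slightly more explicitly why highest weight modules in $\sKlf$ satisfy the hypothesis of Theorem \ref{TT}, which the paper leaves implicit.
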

\begin{proof}
The assumption  (2) implies  that the Cartan algebra and the Virasoro element $L(0)$ acts semisimply. This implies that $KL_{k} =  KL^{fin}_k$.   Then the result follows by applying Theorem \ref{TT}.
\end{proof}

The conformal embeddings $\g_{\overline 0}\hookrightarrow \g$ were classified in  \cite{AMPP-AIM}, and they  include all collapsing levels for Lie superalgebras. Only in some cases when $\g_{\overline 0}$ is reductive, the semisimplicity of $V_{k} (\g_{\overline 0})$ is still an open problem.

\begin{lem} \label{pom-1} Let  $M$ be a non-zero   $V_k(\g)$--module from $\Kl$. Then there is a  non-zero $V_k(\g)$--submodule  $ M^{fin} \subset M$ which belongs to $\sKlf$.  
  \end{lem}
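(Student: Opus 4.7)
The plan is to define
\[
M^{fin}=\bigoplus_{\l\in\ha^*}M_\l,\qquad M_\l=\{v\in M\mid h\,v=\l(h)\,v\text{ for every }h\in\ha\},
\]
as the sum of the honest joint $\ha$-eigenspaces of $M$, and then to verify (i) that $M^{fin}$ is a $V_k(\g)$-submodule of $M$ and (ii) that $M^{fin}\ne 0$. Once these two points are in hand, membership of $M^{fin}$ in $\sKlf$ is automatic: local finiteness of the $\g$-action and the lower bound on $L_\g(0)$ are inherited from $M$, while semisimplicity of $\ha$ is built into the construction.

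For (i), I would pick a basis of $\g$ consisting of a basis of $\h$ together with root vectors, so that each generator $x$ carries a well-defined $\h$-weight $\alpha\in\h^*$. The commutation relations in $\ga$,
\[
[h,x_{(n)}]=\alpha(h)\,x_{(n)},\quad [d,x_{(n)}]=n\,x_{(n)},\quad [K,x_{(n)}]=0
\]
(using $d=-L_\g(0)$ on modules), show at once that every mode $x_{(n)}$ sends an honest $\ha$-eigenvector of weight $\l$ to an honest $\ha$-eigenvector of weight $\l+\alpha+n\d$. Since the modes $\{x_{(n)}\mid x\in\g,\,n\in\ganz\}$ generate the action of $V_k(\g)$ on $M$, the subspace $M^{fin}$ is $V_k(\g)$-stable.

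For (ii), pick any nonzero $v\in M$. Using the direct sum decomposition of $M$ into generalized $L_\g(0)$-eigenspaces, I may replace $v$ by one of its nonzero components and thus assume that $v$ lies in a single generalized $L_\g(0)$-eigenspace, of eigenvalue $c$ say. By local finiteness, $V:=U(\g)\,v$ is finite-dimensional, and it remains inside that generalized eigenspace because each $x_{(0)}$ commutes with $L_\g(0)$. The operator $N:=L_\g(0)-c\,\mathrm{Id}$ is then nilpotent on $V$ and commutes with the $\g$-action, so $W:=\ker(N|_V)$ is a nonzero finite-dimensional $\g$-submodule of $V$ on which $L_\g(0)$ acts by the scalar $c$. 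Decomposing the finite-dimensional $W$ into generalized weight spaces for the abelian algebra $\h$, I pick a nonzero component $W^{\mu,\mathrm{gen}}$; on it the commuting family $\{h-\mu(h)\mid h\in\h\}$ acts by nilpotent endomorphisms, and so admits a nonzero common kernel (standard linear algebra, or Engel applied to an abelian Lie algebra). Any nonzero vector $w$ in this common kernel satisfies $hw=\mu(h)w$ for all $h\in\h$, $L_\g(0)w=cw$, and $Kw=kw$, hence is an honest $\ha$-eigenvector, giving $0\ne w\in M^{fin}$.

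The only even mildly delicate step is the last one, namely producing a common honest $\h$-eigenvector inside the finite-dimensional generalized weight space $W^{\mu,\mathrm{gen}}$; everything else is routine bookkeeping with the commutation relations in $\ga$.
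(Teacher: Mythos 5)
Your overall strategy is essentially the one the paper uses: the paper simply takes a joint eigenvector $w$ of $\h$ and $L(0)$ in $M_{top}$ (these commuting operators act locally finitely there, so such a $w$ exists) and sets $M^{fin}=V_k(\g).w$; you instead take the sum of \emph{all} honest $\ha$-eigenspaces. Both versions rest on the same two facts --- the weight-shifting of the modes $x_{(n)}$ and the existence of one honest joint $\ha$-eigenvector --- so the difference is only that your $M^{fin}$ is the maximal semisimple submodule while the paper's is cyclic.

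There is, however, one step in your nonvanishing argument that does not work as written. You set $V=U(\g)v$ and assert that $N=L_\g(0)-c\,\mathrm{Id}$ is nilpotent \emph{on} $V$ and that $\ker(N|_V)\ne 0$. But $N$ need not stabilize $V$: since $N$ commutes with the $\g$-action one has $NV=U(\g)(Nv)$, and $Nv$ need not lie in $U(\g)v$. (Toy example: $\g$ acting by zero on the span of $v$ and $Nv$ with $Nv\ne0$, $N^2v=0$; then $V=\C v$ and $\ker(N|_V)=0$.) Consequently the usual ``take the last nonzero $N^jv$'' argument does not produce an element of $V$. The repair is immediate: since $v$ lies in a generalized eigenspace, $N^jv=0$ for $j\ge N_0$, so $V':=\sum_{j=0}^{N_0-1}U(\g)N^jv$ is finite-dimensional, $\g$-stable \emph{and} $N$-stable, and the rest of your argument (kernel of a nilpotent operator commuting with $\g$, then a joint honest $\h$-eigenvector inside a generalized $\h$-weight space) goes through on $V'$. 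Alternatively, note that $\ker N$ is $\g$-stable and meets every nonzero generalized $L_\g(0)$-eigenspace, pick $w_0$ there, and apply local finiteness to $U(\g)w_0$. With this fix the proof is correct.
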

  \begin{proof}
 Since $M_{top}$ is a locally finite $\g$--module,  we get  that $\mathfrak h$ and $L(0)$ acts locally finitely on $M_{top}$. So there is a common eigenvector $w$ for the action of $\mathfrak h$ and $L(0)$. Therefore $M^{fin} = V_{k} (\g). w$ is a $V_k(\g)$--submodule of $M$ which is in the category $\sKlf$.    \end{proof}

 \begin{lem}\label{lemma}
 \item[(1)]  Let $\mathcal M$ be  a logarithmic $V_k(\g)$--module in $\Kl$. Then $L(0) - L_{ss}(0)$ is a $V_k(\g)$--homomorphism.
  \item[(2)]  For any module $\mathcal M$  in  $\Kl$, the operator 
 $h(0) - h_{ss} (0)$ is a $V_k(\g)$--homomorphism  for any $h \in \frak h$.
 \end{lem}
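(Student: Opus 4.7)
The plan is to show in both cases that the map in question commutes with all modes $a(n)$ for $a\in\g$, from which it follows that it is a $V_k(\g)$-module endomorphism, since $\g$ strongly generates $V_k(\g)$ and commutation with generators extends to commutation with all modes by the Borcherds identities (equivalently, a $V_k(\g)$-module homomorphism is the same as a $\ga$-module homomorphism of the corresponding restricted module).

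For part (1), first I would decompose $\mathcal M = \bigoplus_\lambda \mathcal M_{[\lambda]}$ into generalized $L(0)$-eigenspaces; this decomposition is stable under the Jordan decomposition, and $L_{ss}(0)$ acts as the scalar $\lambda$ on $\mathcal M_{[\lambda]}$. Since every $a\in\g$ is primary of conformal weight $1$, one has $[L(0),a(n)]=-n\,a(n)$, and in particular $a(n)$ sends $\mathcal M_{[\lambda]}$ into $\mathcal M_{[\lambda-n]}$ (this is a short induction using that $(L(0)-\lambda)^N$ annihilates a vector in $\mathcal M_{[\lambda]}$). Evaluating $[L_{ss}(0),a(n)]$ on $\mathcal M_{[\lambda]}$ then gives $((\lambda-n)-\lambda)\,a(n) = -n\,a(n) = [L(0),a(n)]$, so that $[L(0)-L_{ss}(0),a(n)]=0$ on each generalized eigenspace, and hence on all of $\mathcal M$.

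For part (2), the idea is identical but with the weight decomposition replacing the conformal weight decomposition. Since $\mathcal M$ is $\g$-locally finite, $\h$ acts locally finitely, so we can decompose $\mathcal M=\bigoplus_\mu \mathcal M_{(\mu)}$ into generalized $\h$-weight spaces, and $h_{ss}(0)$ acts as $\mu(h)$ on $\mathcal M_{(\mu)}$. Taking $a\in\g_\alpha$ a root vector, the standard relation $[h(0),a(n)]=[h,a](n)=\alpha(h)\,a(n)$ shows, by the same inductive argument as in (1), that $a(n)\colon\mathcal M_{(\mu)}\to\mathcal M_{(\mu+\alpha)}$. Computing $[h_{ss}(0),a(n)]$ on $\mathcal M_{(\mu)}$ gives $\alpha(h)\,a(n)=[h(0),a(n)]$, so $[h(0)-h_{ss}(0),a(n)]=0$ on $\mathcal M_{(\mu)}$. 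Since $\g$ is a direct sum of $\h$-weight spaces, this suffices.

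There is no real obstacle; the only point requiring a little care is the stability of the generalized (conformal or $\h$-)weight decomposition under the modes $a(n)$, which reduces to an induction on the nilpotency index. Once that is in place, the commutation identity on each generalized eigenspace is a two-line calculation, and the extension from commuting with generators of $V_k(\g)$ to commuting with all modes is standard.
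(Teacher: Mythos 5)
Your proposal is correct and follows essentially the same route as the paper: decompose into generalized eigenspaces, check that each mode $a(n)$ shifts the generalized (conformal or $\h$-) weight by the expected amount, and then verify the commutation of $L(0)-L_{ss}(0)$ (resp.\ $h(0)-h_{ss}(0)$) with $a(n)$ eigenspace by eigenspace. The paper writes this as the identity $x(n)Qv=Qx(n)v$ rather than as a vanishing commutator, but the computation is the same, and both conclude by passing from a $\ga$-homomorphism to a $V_k(\g)$-homomorphism.
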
 
 \begin{proof}
The assertion (1) is already proved in \cite[Remark 2.21]{HLZ}. For completeness, we present here a version of their proof.
 
  We have
 $$ \mathcal M = \bigoplus _{\alpha \in {\Bbb C}} \mathcal M_{\alpha}, \ \mathcal M_{\alpha} = \{ v \in \mathcal M \ \vert \ (L(0)- \alpha) ^{N_\a} v =  0 \text{ for some $N_\a>0$}\}.$$
Define $Q \in \mbox{End}(\mathcal M)$  by 
$$Q v = (L(0)- \alpha )v, \quad v \in \mathcal M_{\alpha}. $$
Therefore $Q = L(0)-L_{ss} (0)$. Take $v \in \mathcal M_{\alpha}$ and $n \in {\Z}$. Then for each $N \in {\ZZ}_{>0}$ we have
$$ (L(0) - (\alpha-n) ) ^ N  x(n) v = x(n) (L(0) - \alpha ) ^ N v$$
which gives  that $x(n)v \in \mathcal M_{\alpha-n}$. Therefore
$$ x(n) Q v = x(n) L(0) v - \alpha x(n) v =   L(0) x(n)  v -  (\alpha-n)  x(n) v =   Q x(n) v,$$
which implies that $Q$ is a $\widehat {\g}$--homomorphism,  hence a  $V_k(\g)$--homomorphism. This proves (1). The proof of (2) is completely analogous.
 \end{proof}
 
    \begin{thm} \label{ext-kl-large} Assume that the category $\sKlf$ is semisimple and  that  for  any irreducible    $V_k(\g)$--module $M$   in $\Kl$ we have  
 \begin{equation}\label{Ext-1-2}  \mbox{Ext}^{1} (M_{top}, M_{top}) = \{0\}\end{equation}
  in the category of finite-dimensional $\g$--modules.
    Then $\Kl $ is semisimple and $\sKlf=\Kl$.
 \end{thm}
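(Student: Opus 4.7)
My plan is to prove $\Kl = \sKlf$; combined with the given semisimplicity of $\sKlf$, this yields semisimplicity of $\Kl$. A preliminary observation is that every irreducible module $M \in \Kl$ already lies in $\sKlf$: by Lemma \ref{pom-1}, $M$ contains a nonzero submodule in $\sKlf$, which must equal $M$ by irreducibility. Hence the simple objects of $\Kl$ and of $\sKlf$ coincide.

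I would then argue by contradiction. Fix $M \in \Kl$ and let $N$ be the sum of all submodules of $M$ lying in $\sKlf$; this sum is itself in $\sKlf$ since semisimple actions on submodules extend to their sum. If $N \subsetneq M$, then $M/N \in \Kl$ is non-zero, and Lemma \ref{pom-1} together with the semisimplicity of $\sKlf$ produces a submodule $P$ with $N \subsetneq P \subseteq M$ and $P/N \cong S$ irreducible in $\sKlf$. By maximality of $N$, $P \notin \sKlf$, so the task is to derive a contradiction by splitting the short exact sequence $0 \to N \to P \to S \to 0$.

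The key tool is Lemma \ref{lemma}: the operators $Q = L(0) - L_{ss}(0)$ and $Q_h = h(0) - h_{ss}(0)$ (for $h \in \h$) are $V_k(\g)$-endomorphisms of $P$. They vanish on $N \in \sKlf$ and induce zero on $P/N = S \in \sKlf$, so they factor through $V_k(\g)$-homomorphisms $\bar Q, \bar Q_h : S \to N$. Writing $N = \bigoplus_i N_i$ as a semisimple module in $\sKlf$ and invoking Schur's lemma for simple $V_k(\g)$-modules, these morphisms land in the $S$-isotypic component $N^S$. If $N^S = 0$, then $Q$ and all $Q_h$ vanish on $P$, forcing $P \in \sKlf$---contradiction. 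Otherwise, one peels off the non-$S$ part $N^\perp$ (which splits off cleanly by the Schur-type argument applied to cross-extensions among non-isomorphic simples) and reduces, via a further decomposition of the isotypic piece, to the single core claim: every self-extension $0 \to S \to E \to S \to 0$ in $\Kl$ splits.

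For this reduced problem I would analyze the top conformal weight component $E_{[\alpha]}$ (with $\alpha$ the conformal weight of $S_{top}$), which fits into an extension $0 \to S_{top} \to E_{[\alpha]} \to S_{top} \to 0$ of finite-dimensional $\g$-modules. The hypothesis $\mbox{Ext}^1(S_{top}, S_{top}) = 0$ provides a $\g$-splitting $E_{[\alpha]} = S_{top} \oplus T$ with $T \cong S_{top}$. The main obstacle is to upgrade this $\g$-splitting to a $V_k(\g)$-splitting of $E$: concretely, to produce a $\g$-complement on which $L(0)$ acts by the scalar $\alpha$ and each $h \in \h$ acts by its weight eigenvalue, so that its $V_k(\g)$-span lies in $\sKlf$ and projects isomorphically onto $S$. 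By Schur's lemma the induced $V_k(\g)$-morphisms $\bar Q, \bar Q_h : S \to S$ are scalars, and the hardest part of the argument is to combine the Ext vanishing on tops with the semisimplicity of $\sKlf$ to force these scalars to vanish. Once this is done, $E$ itself lies in $\sKlf$, the semisimplicity of $\sKlf$ splits the extension as $E \cong S \oplus S$, and the resulting contradiction completes the proof.
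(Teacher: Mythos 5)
Your architecture tracks the paper's quite closely: the paper reduces via \cite[Lemma 1.3.1]{GK} to (a) vanishing of $\mbox{Ext}^{1}$ between irreducible modules in $\Kl$ and (b) the existence of irreducible submodules (your maximal--semisimple--submodule argument is a hand-rolled version of that reduction, and your appeal to Lemma \ref{pom-1} for (b) is exactly the paper's), and it then attacks extensions just as you do, using the operators $Q=L(0)-L_{ss}(0)$ and $Q_h=h(0)-h_{ss}(0)$ from Lemma \ref{lemma} together with Schur's lemma to kill extensions between non-isomorphic simples. The problem is that at the one point where the hypothesis \eqref{Ext-1-2} must actually be used --- splitting a self-extension $0\to S\to E\to S\to 0$ --- you stop short. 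You produce the $\g$-splitting of the top and the scalar morphisms $\bar Q,\bar Q_h\colon S\to S$, and then write that the hardest part is ``to force these scalars to vanish. Once this is done\dots''. That step is the entire content of the theorem, and it is left unproved; as written the proposal is a correct plan with a hole at its load-bearing joint. (The reduction from a semisimple, possibly infinite, $N$ to a single simple quotient is also only gestured at, but that is standard and is what the citation of \cite{GK} is for.)

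The missing argument is short, and you already have every ingredient. Since $\mbox{Ext}^{1}_{\g}(S_{top},S_{top})=0$, the top $E_{top}$ is a semisimple $\g$-module isomorphic to $S_{top}\oplus S_{top}$. Hence $\h$ acts diagonally on $E_{top}$, and the Sugawara operator $L(0)$, which on the lowest graded piece is proportional to the Casimir of $\g$, acts on the isotypic module $S_{top}\otimes \C^2$ as $(\mathrm{scalar})\otimes 1$; so every $Q$ and $Q_h$ vanishes on $E_{top}$. Since $E$ is generated by $E_{top}$ (indeed $V_k(\g)\cdot E_{top}$ contains $V_k(\g)\cdot S_{top}=S$ and surjects onto $E/S$), and $Q$, $Q_h$ are $V_k(\g)$-homomorphisms, they vanish on all of $E$; thus $E\in\sKlf$ and splits by the assumed semisimplicity of $\sKlf$. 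The paper runs the same point in the contrapositive: for a non-split extension outside $\sKlf$ some $Q\neq 0$, which (by the Schur argument you also give) forces the two simples to be isomorphic, and then Zhu's functor produces a non-split self-extension of $M_{top}$, contradicting \eqref{Ext-1-2}. Either way, the bridge from ``Ext vanishes on tops'' to ``the nilpotent parts vanish globally'' is the Casimir/weight-space computation on the top combined with generation by the top, and your write-up never crosses it.
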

 
 \begin{proof}
 In a view of \cite[Lemma 1.3.1]{GK} it  suffices to show that 
 \begin{itemize}
 \item[(1)] $ \mbox{Ext}^{1} (M, N) = \{0\}$ for any two irreducible modules $M, N $ in $\Kl$;
 \item[(2)] Each module $M$ in  $\Kl$  contains an irreducible submodule.
 \end{itemize}
 Assume that we have a non-split  extension 
 \begin{equation}\label{Ext-1-3}  0 \rightarrow M \rightarrow M^{ext} \rightarrow N \rightarrow 0  \end{equation}
for a certain ${\Z}_{\ge 0}$--gradable module $M^{ext}$ in $\Kl$. 
If $M^{ext}$ is in $\sKlf$, then $M^{ext} \cong  M \oplus N$ because  $\sKlf$ is semisimple.   This contradicts the assumption that $M^{ext}$ is the non-split extension     (\ref{Ext-1-3}).\par  If $M^{ext}$ does not belong to  $\sKlf$, then 
  $L(0)$ does not act semisimply or there is $h \in \mathfrak h$ such that $h(0)$ does not act semisimply. 
  Define accordingly the operator $Q$ as in Lemma \ref{lemma}, i.e.  $Q = L(0) -L(0)_{ss}$ or $Q= h(0) - h(0)_{ss}$. 
  Then $Q$ is a  non-zero $V_k(\g)$--homomorphism and therefore $N \cong Q M^{ext}\cong M$. 
Indeed, since $M$ is irreducible, we have that  $QM=0$. It follows that $QM^{ext}\cong M^{ext}/Ker\,Q\subset M^{ext}/M\cong N$. Since $N\ne 0$, we find $QM^{ext}\cong N$. If the extension does not split, we must have $QM^{ext}\cap M \ne 0$, so $QM^{ext}\cong M$, and at the end $M\cong N$. By  applying the Zhu's functor to \eqref{Ext-1-3} we get a non-split   extension 
$$ 0 \rightarrow M_{top} \rightarrow (M^{ext})_{top} \rightarrow M_{top} \rightarrow 0$$
in the category of finite-dimensional $\g$--modules. This  contradicts \eqref{Ext-1-2}. 
 Thus (1) holds.
 
Let us prove (2). From Lemma \ref{pom-1} we get that $M$ contains a non-zero submodule $M^{fin}$ in $\sKlf$. Since $\sKlf$ is semisimple, we conclude that $M^{fin}$ contains an irreducible submodule. The claim follows.
 \end{proof}

 \section{ The case $\g=C(n+1)$  }

\subsection{Collapsing level $k=-\frac{1}{2}$.}

\begin{lem} Let 
 $\g=C(n+1)$ and $k=-\frac{1}{2}$.  Then the unique irreducible modules in $KL_{-\frac{1}{2}}$ are $V_{-\frac{1}{2}}(\g)$ and $L(-\frac{1}{2}\Lambda_0+\frac{1}{2}\theta)$, where 
 $\theta$ is the highest root of $sp(2n)$.
 \end{lem}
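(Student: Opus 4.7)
The plan is to combine the collapsing of $k=-\tfrac{1}{2}$ with the control on the quantum Hamiltonian reduction $H$ furnished by \cite{KW2, Araduke}. Table~\ref{Table2}, line~12 (with $\g=spo(2n|2)=C(n+1)$) tells us that $k=-\tfrac{1}{2}$ is a collapsing level and that $W_{-1/2}(\g,\theta)\cong \C$, the trivial vertex algebra. Let $L(\l)$ be any irreducible module in $KL_{-1/2}$. By \cite{KW2} the image $H(L(\l))$ is a highest-weight module for $W_{-1/2}(\g,\theta)=\C$, and by \cite[Main Theorem]{Araduke} it is non-zero since $-\tfrac{1}{2}\notin\Z_{\ge 0}$ (the same non-vanishing argument already invoked in the proof of Theorem~\ref{maincoll}); as $\C$ admits only the trivial non-zero highest-weight module, $H(L(\l))\cong\C$.

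Next I would translate this $H$-vanishing statement into a weight condition via the Kac--Wakimoto reduction recipe \cite{KW2}, which expresses the $\widehat{\g^\natural}$-highest weight $\bar\l$ of $H(L(\l))$ as an explicit affine-linear function of $\l$. Requiring $\bar\l=0$ imposes a finite system of linear conditions on the components of $\l$ (along $\h^\natural$ and along $x$). Coupling these with the requirement that $L(\l)_{top}$ be a finite-dimensional $\g$-module (forced by $L(\l)\in KL_{-1/2}$), and using that the highest root of $sp(2n)$ is $\theta$ while $\g_{\bar 0}=so(2)\oplus sp(2n)$ has a one-dimensional centre, a direct inspection leaves exactly two admissible weights: $\l=-\tfrac{1}{2}\Lambda_0$ and $\l=-\tfrac{1}{2}\Lambda_0+\tfrac{1}{2}\theta$. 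The top component of the second is the standard $(2n+2)$-dimensional representation of $osp(2|2n)$, which is manifestly finite-dimensional.

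Finally, to exhibit $L(-\tfrac{1}{2}\Lambda_0+\tfrac{1}{2}\theta)$ as a genuine simple $V_{-1/2}(\g)$-module I would appeal to the free-field realisation of $V_{-1/2}(osp(2|2n))$ on a symplectic-boson / Clifford system, in which the two modules of the statement appear as the two simple-current sectors, or equivalently invoke the conformal embedding from \cite{AMPP-AIM} and identify the second sector directly. The main obstacle is the second step: one must make the Kac--Wakimoto formula concrete enough to enumerate all preimages of $\bar\l=0$ and then cut them down to the two finite-dimensional candidates. The subtlety comes from the continuous parameter introduced by the abelian factor $so(2)\subset\g_{\bar 0}$, which is only pinned down jointly by the $H$-vanishing constraint and the $sp(2n)$-integrality of $\l|_{\h^\natural}$.
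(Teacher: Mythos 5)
Your proposal follows essentially the same route as the paper, whose proof is a one‑line appeal to the discussion preceding \cite[Lemma 4.1]{AKMPPIMRN}: the collapse $W_{-1/2}(\g,\theta)\cong\C$ forces $H(L(\Lambda))\cong\C$ for any irreducible $L(\Lambda)$ in $KL_{-1/2}$, whence $\Lambda=-\tfrac12\Lambda_0+\ell\theta$ with $\ell^2-(k+1)\ell=0$, i.e.\ $\ell\in\{0,\tfrac12\}$. The one inaccuracy in your sketch is calling the decisive constraint ``linear'': the $\h^\natural$-component of the reduced highest weight is affine-linear in $\Lambda$, but the coefficient $\ell$ along $x$ is pinned down only by the vanishing of the \emph{conformal weight} of $H(L(\Lambda))$, which is quadratic in $\ell$ --- this quadratic is precisely what yields two solutions rather than one, so your enumeration step must use it explicitly.
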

    \begin{proof} The discussion preceding   \cite[Lemma 4.1]{AKMPPIMRN}
    applies: if $L(\L)$ is irreducible in $KL_{-\frac{1}{2}}$ then $\L=-\frac{1}{2}\Lambda_0+\ell \theta$ and 
$\ell^2-(k+1)\ell=0$.
\end{proof}

\begin{thm} \label{cn1}  Let  $\g=C(n+1)$ and $k=-\frac{1}{2}$. Then we have:
\begin{itemize}
\item  Irreducible modules in $\Kl$ have no self-extensions.
\item The category $\sKlf$ is semisimple and $\Kl=\sKlf$.
\end{itemize}

\end{thm}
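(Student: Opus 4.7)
The plan is to verify the hypotheses of Theorem \ref{ext-kl-large}. Semisimplicity of $\sKlf$ is already in hand: identifying $C(n+1)=osp(2|2n)=spo(2n|2)$, the collapsing chain \eqref{e} gives $(spo(2n|2),-\tfrac12)\triangleright \mathbb C$, so Corollary \ref{list} applies. By the preceding lemma, the only irreducible objects of $\Kl$ are $V_{-1/2}(\g)$, whose top is the trivial $\g$-module $\mathbb C$, and $L(-\tfrac12\Lambda_0+\tfrac12\theta)$, whose top is the finite-dimensional irreducible $\g$-module $U$ of highest weight $\tfrac12\theta$. Thus the whole theorem reduces to verifying \eqref{Ext-1-2} for these two tops: this will simultaneously establish part (1) and, through Theorem \ref{ext-kl-large}, it will deliver $\Kl=\sKlf$ and the semisimplicity of $\Kl$ in part (2).

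The vanishing $\mbox{Ext}^{1}_\g(\mathbb C,\mathbb C)=\{0\}$ is a general fact for a simple basic Lie superalgebra: since $\g=[\g,\g]$ has no abelian quotient, any finite-dimensional self-extension $0\to\mathbb C\to E\to\mathbb C\to 0$ is forced to have trivial $\g$-action and hence splits via a $\g$-equivariant section.

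The heart of the argument is the vanishing of $\mbox{Ext}^{1}_\g(U,U)$. I would follow the strategy used in the proof of Corollary \ref{list} for $V_1(osp(2m|n))$: exploit a conformal embedding from \cite{AMPP-AIM} of the form $\mathcal H\otimes V_{-1/2}(sp(2n))\hookrightarrow V_{-1/2}(osp(2|2n))$, where $\mathcal H$ is a Heisenberg (or lattice) subalgebra realizing the $\mathfrak{so}(2)$ centre of $\g_{\bar 0}$. Using this embedding, decompose $V_{-1/2}(\g)$ and $L(-\tfrac12\Lambda_0+\tfrac12\theta)$ into finite sums of irreducibles over $\mathcal H\otimes V_{-1/2}(sp(2n))$, and then invoke the fusion rules among the resulting $V_{-1/2}(sp(2n))$-modules (which are controlled by admissibility via \cite{AM95} and \cite{CY}), together with Heisenberg charge conservation, to rule out any non-split self-extension of $L(-\tfrac12\Lambda_0+\tfrac12\theta)$ inside $\Kl$. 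Restricting such a putative extension to top components through the Zhu algebra then transfers the conclusion to the category of finite-dimensional $\g$-modules and yields $\mbox{Ext}^{1}_\g(U,U)=\{0\}$.

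The main obstacle is the explicit computation of the subVOA decomposition of the two irreducible modules and the ensuing fusion analysis; I expect this to parallel closely the treatment of $V_1(osp(2m|n))$ given in the proof of Corollary \ref{list}, with the role of $V_1(so(2m))$ replaced by the Heisenberg/lattice factor $\mathcal H$. Once this is secured, Theorem \ref{ext-kl-large} delivers both bullets of the statement at once.
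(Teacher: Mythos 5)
Your reduction of the semisimplicity of $\sKlf$ to the collapsing chain $(spo(2n|2),-\tfrac12)\triangleright\C$ is legitimate, and your argument for $\mathrm{Ext}^1_\g(\C,\C)=\{0\}$ is correct. The genuine gap is in the step you yourself identify as the heart of the matter, the vanishing of $\mathrm{Ext}^1_\g(U,U)$ for $U$ the top of $L(-\tfrac12\Lambda_0+\tfrac12\theta)$: you propose to rule out self-extensions of that module \emph{inside} $\Kl$ by coset/fusion arguments and then ``transfer the conclusion to the category of finite-dimensional $\g$-modules'' via the Zhu functor. That transfer goes in the wrong direction. Theorem \ref{ext-kl-large} uses Zhu's functor to pass from a non-split extension in $\Kl$ to a non-split extension of top components; the converse --- that every non-split extension of $U$ by $U$ in the category of finite-dimensional $\g$-modules arises as the top of a non-split extension in $\Kl$ --- is not established anywhere, so excluding self-extensions in $\Kl$ does not yield \eqref{Ext-1-2}. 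Moreover, if your fusion argument really did exclude \emph{all} self-extensions in $\Kl$, including logarithmic ones (which is precisely what condition \eqref{Ext-1-2} is designed to control), then Theorem \ref{ext-kl-large} would be superfluous; as written, the architecture is circular.

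The paper's proof bypasses \eqref{Ext-1-2} entirely. It verifies condition (2) of Proposition \ref{klf=kl} directly: by \cite[Proposition 4.15]{AMPP-AIM}, $V_{-1/2}(\g)$ is an extension of $V_{-1/2}(sp(2n))\otimes V_D$, where $V_D$ is the rank-one \emph{lattice} vertex algebra with $\langle\alpha,\alpha\rangle=4$ --- not merely the Heisenberg $M(1)$, and this matters, since $M(1)$ is not regular and cannot by itself force semisimple restriction of an arbitrary weak module. Regularity of $V_D$, the Heisenberg coset construction, and complete reducibility for the admissible $V_{-1/2}(sp(2n))$ in category $\mathcal O$ then show that every module in $\Kl$ is completely reducible over $V_{-1/2}(sp(2n))\otimes M(1)$, whence $\Kl=\sKlf$; the fusion rules of the four $V_D$-modules $U_0,\dots,U_3$ force every highest weight module in $\Kl$ to coincide with one of the two irreducibles, and Corollary \ref{largest} concludes. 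If you recast your coset argument in that form it closes the gap; routed through Theorem \ref{ext-kl-large} it does not.
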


\begin{proof} 
It suffices to check that     condition (2)  of Proposition \ref{klf=kl}  and condition (3) of Corollary \ref{largest} hold.

Let us first check that:
\begin{itemize}
 \item[(*)] any $V_k(\g)$--module $W$  in $\Kl$ is  completely reducible as $V_{k}(\g_{\bar 0}) = V_{-1/2} (sp(2n)) \otimes M(1)$--module,
 where $M(1)$ is the Heisenberg vertex algebra of rank one.
 \end{itemize}
Let $V_D = M(1) \otimes {\C}[D]$ be the lattice vertex algebra associated to rank one lattice $D = \Z \alpha, \langle \alpha, \alpha \rangle = 4$. It has $4$ non-isomorphic modules:
$$ U_i = V_{ D + \tfrac{i }{4}\alpha}, i =0,1,2,3,   $$
and the following fusion rules: $$U_i \times U_j = U_{(i+j)\mod 4}. $$
From  \cite[Proposition 4.15]{AMPP-AIM}, we have  that
$$V_{-\frac{1}{2}}(\g) = (V_{-1/2} (sp(2n)) \otimes  U_0 )\bigoplus (L_{-1/2} (\omega_1) \otimes U_2) $$
 (note that in {\it loc. cit.} a different normalization is used, so that the level $1$ used there turns into level $-1/2$).
The irreducible module $L(-\tfrac{1}{2}\Lambda_0+\tfrac{1}{2}\theta)$ decomposes as
$$ L(-\tfrac{1}{2}\Lambda_0+\tfrac{1}{2}\theta) =  V_{-1/2} (sp(2n)) \otimes U_1   \bigoplus L_{-1/2} (\omega_1) \otimes  U_3.  $$

By using the regularity of $V_D$ (i.e., complete reducibility in the entire category of weak $V_D$--modules, cf. \cite{DLM-reg}) and  the concept of Heisenberg coset (cf. \cite{CKLR}), 
we get that
$$ W = M_0 \otimes U _0 \bigoplus M_1 \otimes U_1 \bigoplus M_2 \otimes U_2 \bigoplus M_3 \otimes U_3$$ 
for certain 
$V_{-1/2} (sp(2n))$--modules $M_i$  in $\Kl$, $i=0,1,2,3$.
By using  complete reducibility for the admissible vertex algebra $V_{-1/2} (sp(2n))$ in the category $\mathcal O$ (cf. \cite{A-1994}), we get that
the  $M_i$ are direct sum of copies of $V_{-1/2} (sp(2n)) $ or $L_{-1/2} (\omega_1)$.
This implies that $W$ is a direct sum of irreducible $V_{-1/2} (sp(2n)) \otimes  U_0$--modules. Since each irreducible $U_0$--module is a direct sum of irreducible modules for the Heisenberg vertex algebra $M(1)$, we get that $W$ is a direct sum of $V_{k}(\g_{\bar 0})$--modules. So (*) holds.

It remains  to prove that any highest weight $V_k(\g)$--module in $\Kl$ is irreducible.  
By using the same arguments as above and fusion rules, we see that  if $W$ is a highest weight module in $\Kl$, it  decomposes as
$$V_{-1/2} (sp(2n)) \otimes  U_0  \bigoplus L_{-1/2} (\omega_1) \otimes U_2  \quad \mbox{or} \quad V_{-1/2} (sp(2n)) \otimes U_1   \bigoplus L_{-1/2} (\omega_1) \otimes  U_3.  $$
 Therefore it is irreducible. The claim follows. 
\end{proof}

\subsection{Collapsing level $k=- \frac{n+1}{2}$.}
 
 \begin{thm}\label{newC} Assume that $\g = C(n+1)$, $k =-\frac{n+1}{2}$. Then   $\sKlf$ is semisimple.
\end{thm}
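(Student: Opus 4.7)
The plan is to apply Theorem \ref{maincoll} by constructing an explicit collapsing chain for $(C(n+1),-(n+1)/2)$ that descends through the $C$-family and ends at a Heisenberg vertex algebra. First, I would identify $C(j+1) \cong spo(2j|2)$ and read off line 8 of Table \ref{Table2} with the table's pair $(n',m')$ set to $(2j,2)$ to obtain, for every $j \ge 2$,
$$
H\bigl(V_{-(j+1)/2}(spo(2j|2))\bigr) \;\cong\; V_{-j/2}\bigl(spo(2j-2|2)\bigr),
$$
showing $(C(j+1),-(j+1)/2) \triangleright (C(j),-j/2)$. Moreover, by the collapsing polynomial $(k+\tfrac12)(k+\tfrac{j}{2})$ attached to $spo(2j-2|2)$ (Table 1), the level $-j/2$ is itself collapsing, so the recursion can be iterated without obstruction.

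Next, I would iterate from $j = n$ down to $j = 2$, obtaining the chain
$$
(C(n+1),-\tfrac{n+1}{2}) \triangleright (C(n),-\tfrac{n}{2}) \triangleright \cdots \triangleright (C(3),-\tfrac{3}{2}) \triangleright (C(2),-1),
$$
with $C(2) \cong sl(2|1)$. To handle the terminal step I would observe that for $sl(2|1)$ the minimal grading yields $\g_0 = \h$, hence $\g^\natural = \mathbb C$, and therefore at $k=-1$ the $W$-algebra collapses to $\mathcal V_{-1}(\mathbb C) \cong M(1)$, a rank-one Heisenberg vertex algebra; this extends the chain by one further step $(sl(2|1),-1) \triangleright (\mathbb C,\beta_{-1})$. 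Since $KL^{fin}$ of a Heisenberg algebra is manifestly semisimple—its objects decompose into Fock modules on which $\ha$ acts semisimply and conformal weights are bounded below—Theorem \ref{maincoll} propagates semisimplicity up the entire chain, yielding $\sKlf = KL^{fin}_{-(n+1)/2}(C(n+1))$ semisimple, as required. For $n=1$ the iteration is empty and the argument reduces directly to the base step.

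The main obstacle is the terminal step: verifying that $(C(2),-1)=(sl(2|1),-1)$, a case not covered by the standard tabulation under either the $sl$ or the $spo$ headings, indeed collapses to $M(1)$, and that an abelian (Heisenberg) endpoint legitimately terminates a collapsing chain in the sense of Definition \ref{cc}. Once that verification is in hand, the inductive climb up the chain is a direct application of Theorem \ref{maincoll} and the structure of Table \ref{Table2}.
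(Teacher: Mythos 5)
Your descent through the $C$-family is exactly the paper's: identifying $C(j+1)\cong spo(2j|2)$ and iterating line 8 of Table \ref{Table2} gives the chain $(C(n+1),-\tfrac{n+1}{2})\triangleright (C(n),-\tfrac{n}{2})\triangleright\cdots\triangleright(C(2),-1)$, and this part is correct. The problem is the terminal step, which you flag as ``the main obstacle'' but then leave unverified --- and it cannot be verified in the form you propose. For $\g=sl(2|1)$ one has $h^\vee=1$, so $k=-1$ is the \emph{critical} level. The entire minimal $W$-algebra framework of the paper (the functor $H$, Arakawa's nonvanishing and irreducibility results, the collapsing-level classification of Table 1, and hence Definition \ref{cc} and Theorem \ref{maincoll}) is set up only for non-critical $k$; this is exactly why Table 2, line 6 excludes $m=n+1$, i.e.\ excludes $(sl(2|1),-1)$. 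So the step $(sl(2|1),-1)\triangleright(\C,\beta_{-1})$ with ``$W_{-1}(sl(2|1),\theta)$ collapses to $M(1)$'' is not available, and Theorem \ref{maincoll} cannot be invoked to propagate semisimplicity past $(C(2),-1)$. Note also that for $n=1$ the whole theorem \emph{is} this critical-level case, so your ``base step'' is precisely the unresolved point.

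The paper closes the gap differently: it does not extend the chain below $(C(2),-1)$, but instead quotes Creutzig--Yang \cite[Theorem 6.6]{CY}, who prove directly (by tensor-categorical methods, after rewriting $V_{-1}(sl(2|1))=V_1(sl(1|2))$) that every module in $\sKlf$ for $V_{-1}(sl(2|1))$ is completely reducible. With that external input, $KL^{fin}_{-1}(C(2))$ is semisimple and Theorem \ref{maincoll} (applied to the chain you correctly constructed, whose links all lie at non-critical levels) yields the claim. To repair your argument you should replace the proposed Heisenberg endpoint by this citation, or by some other direct proof of semisimplicity of $\sKlf$ at the critical level of $sl(2|1)$.
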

 \begin{proof}
First we consider the case $n=1$, so  $\g= C(2)\cong sl(2 \vert 1)$. Then $k=-1$ is the critical  level.  Recently T. Creutzig and J. Yang in \cite[Theorem 6.6]{CY} proved that in this case every module in $\sKlf$ is  completely reducible  
(to match Creutzig-Yang's result with our setting note that  $V_{-1}(sl(m|n)) =V_1(sl(n|m))$). Therefore we have the collapsing chain
$$ (C(n+1),- \frac{n+1}{2})\triangleright  (C(n),- \frac{n}{2}) \triangleright\ldots  \triangleright(C(2),-1)$$
implying that $\sKlf$ is semisimple. 
 \end{proof}

\section{ Semisimplicity of $KL_{-1}$ for $\g=\mathfrak{sl}(m \vert n)$ and $ \g = \mathfrak{psl}(n \vert n)$.}
In $sl(n|m), n\ne m$ set $\alpha_i^\vee=E_{ii}-E_{i+1i+1}$ for $i\ne n$ and $\alpha_n^\vee=E_{nn}+E_{n+1n+1}$ ($E_{ij}$ are matrix units). Define $\omega_i\in \h^*$ by setting $\omega_i(\alpha_j^\vee)=\delta_{ij}$ and $\omega_0=0$. When $n=m$, we work modulo the identity.\par
Recall that the defect ${\rm def}\,\g$ of a basic classical Lie superalgebra $\g$ is the dimension of a maximal isotropic subspace in the real span of roots. When $\g=sl(m|n)$
the defect is $\min\{m,n\}$. Also recall that the atypicality of a weight $\l$ is the maximal number of linearly independent mutually orthogonal isotropic roots which are also orthogonal to $\l$. The atypicality of an  irreducible finite dimensional   $\g$-module $V$ of  highest weight $\l$ is the atypicality of $\l+\rho$ (here $\rho$ is the half sum of positive even roots minus  the half sum of positive odd roots).
 
 In \cite{KWNT} it is shown that the atypicality does not depend on the choice of the set of positive roots.  Let $L_{\g_{\bar 0}}(\l)$ be the finite dimensional irreducible $\g_{\bar 0}$--module of highest weight $\l$. The following conditions are equivalent \cite{K0}:
 \begin{enumerate}
 \item $\l+\rho$ is atypical.
 \item The Kac module $Ind_{\g_{\bar 0}}^\g(L_{\g_{\bar 0}}(\l))$ is not irreducible.
 \end{enumerate}
 \noindent It turns out that the atypicality of the trivial module, i.e. that of $\rho$, is ${\rm def}\,\g$.
 \subsection{ The representation theory of  $V_1 (psl(m \vert m))$   via decomposition of conformal embedding
 }
 
 Let us consider first the case $V= V_1 (psl(m \vert m))$ for $m \ge 3$. 
For $s \in {\Z}_{\ge 0}$ consider the following $V_{-1}(sl(m))$-modules
$$\pi_s := L_{-1} (s\omega_1), \quad \pi_{-s}:= L_{-1} (s \omega_{m-1}). $$
In order to prove that $V$ is semisimple in $KL_1$, we need to prove the following:

\begin{prop} \label{prop-1}Assume that $M$ is any highest weight $V$--module in $KL_1$. Then $M \cong V$.
\end{prop}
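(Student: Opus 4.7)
The plan is to exploit a branching decomposition of $V$ over a conformally embedded subalgebra, then apply fusion-rule arguments in the spirit of the proofs of Theorem \ref{cn1} and the $V_1(osp(2m|n))$ case within Corollary \ref{list}.

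First, I would establish the conformal embedding
\[
V_{-1}(\mathfrak{sl}(m)) \otimes V_{-1}(\mathfrak{sl}(m)) \hookrightarrow V = V_1(\mathfrak{psl}(m|m)),
\]
which comes from $(\mathfrak{psl}(m|m))_{\bar 0} \cong \mathfrak{sl}(m) \oplus \mathfrak{sl}(m)$ with opposite signs on the two factors, so that level $1$ on $\mathfrak{psl}(m|m)$ induces level $-1$ on each $\mathfrak{sl}(m)$-copy. A character computation, based on the fact that $\mathfrak g_{\bar 1}$ lies in bidegree $(1,-1) \oplus (-1,1)$ with respect to the two factors and on strong generation by $\mathfrak g$, yields the branching
\[
V_1(\mathfrak{psl}(m|m)) = \bigoplus_{s \in \mathbb{Z}} \pi_s \otimes \pi_{-s}^{(2)},
\]
along the lines of \cite[Proposition 4.8]{AMPP-AIM}.

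Next, given a highest weight module $M$ in $KL_1$, I would restrict to $V_{-1}(\mathfrak{sl}(m))^{\otimes 2}$ and apply Arakawa's theorem \cite{Ar4}, which yields complete reducibility of the restriction in the category $\mathcal O$ of the subalgebra:
\[
M = \bigoplus_{s,t \in \mathbb{Z}} \mu(s,t)\, \pi_s \otimes \pi_t^{(2)}, \quad \mu(s,t) \in \mathbb{Z}_{\ge 0}.
\]
The cyclic highest weight vector $v$ of $M$ is automatically a joint $\widehat{\mathfrak{sl}}(m)$-highest weight vector and so lies in a single summand $\pi_{s_0} \otimes \pi_{t_0}^{(2)}$. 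Using the fusion rules $\pi_s \times \pi_{s'} = \pi_{s+s'}$ for $V_{-1}(\mathfrak{sl}(m))$, the submodule $V \cdot v = M$ is concentrated on the line $s + t = s_0 + t_0$.

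The final step is to show $(s_0,t_0) = (0,0)$, which forces $v$ to be a scalar multiple of the vacuum and hence $M = V \cdot v = V$. To rule out the nontrivial simple-current orbits $s+t = n \ne 0$, I would analyze the Sugawara conformal weights $h(s) = |s|(|s|+m)/(2m)$ of the summands $\pi_{s_0+r} \otimes \pi_{t_0-r}^{(2)}$ over $r \in \mathbb Z$, exploit that $v$ realizes the minimum $L(0)$-value, and combine this with the explicit action of the odd zero modes of $\mathfrak g$ viewed as intertwining operators between admissible modules. The main obstacle is precisely this step: namely, rigorously excluding candidate highest weight $V$-modules on orbits with $n \ne 0$ inside $KL_1$, as the naive conformal-weight analysis alone allows top summands of the form $\pi_{n/2}\otimes\pi_{n/2}^{(2)}$. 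Overcoming this likely requires invoking the null-vector relations of the maximal ideal of $V^1(\mathfrak{psl}(m|m))$ or a Zhu-algebra argument showing that only the trivial $\mathfrak{psl}(m|m)$-module arises as a top component of a module in $KL_1$ at level $1$.
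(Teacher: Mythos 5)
Your overall strategy --- branch $V$ over the even part, use complete reducibility and fusion rules to pin down the shape of $M$, then analyse conformal weights --- is the same as the paper's, but there are two concrete problems. First, the branching is wrong: since the supertrace form of $psl(m|m)$ restricts with opposite signs to the two copies of $sl(m)$, level $1$ induces level $+1$ on one copy and $-1$ on the other (your own phrase ``opposite signs on the two factors'' contradicts your conclusion that both copies sit at level $-1$). The correct decomposition, from \cite[Theorem 4.4]{AMPP-AIM}, is $V=\bigoplus_{s\in\Z}L_1(\omega_{\overline{s}})\otimes\pi_{-s}$ over $U=V_1(sl(m))\otimes V_{-1}(sl(m))$, and the relevant conformal weights are $h[r,s]=\frac{(m-r)r+s(s+m)}{2m}$ with $r$ ranging over $\{0,\dots,m-1\}$; this asymmetry is what makes the final step work. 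Relatedly, your appeal to \cite{Ar4} for complete reducibility does not apply: $k=-1$ is not admissible for $sl(m)$, $m\ge 3$. The paper instead uses rationality of $V_1(sl(m))$ together with semisimplicity of $KL_{-1}(sl(m))$, a collapsing level covered by \cite[Theorem 1.2]{AKMPPIMRN}.

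Second, and decisively, the step you flag as ``the main obstacle'' is a genuine gap in your argument, and it does not require null-vector relations or a Zhu-algebra computation. The paper closes it as follows: since $M$ is a highest weight module, its $L(0)$-spectrum lies in $h_0+\Z_{\ge 0}$, so the conformal weights of all summands $U_{\overline{r_0+\ell},\,s_0+\ell}$ on the fusion orbit must differ by integers; the computation $h[r+1,s+1]-h[r,s]=\frac{m-r+s}{m}$ shows that this integrality holds precisely when $r_0\equiv s_0 \bmod m$, which forces the orbit to contain a summand of the form $U_{t,t}$ and hence to coincide with the orbit defining $V$ itself, giving $M\cong V$. With your symmetric (incorrect) branching this integrality argument cannot single out the vacuum orbit --- which is exactly why you found yourself unable to exclude the summands $\pi_{n/2}\otimes\pi_{n/2}^{(2)}$.
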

\begin{proof}
Since $V_1(sl(m))$ is rational and    $KL_{-1}(sl(m))$ is semisimple \cite[Theorem 1.2]{AKMPPIMRN} we conclude that $M$ is  completely reducible as $U=V_1(sl(m)) \otimes V_{-1}(sl(m))$--module. This implies that $M$ contains a $U$--submodule isomorphic to
$U_{r_0,s_0} = L_1 (\omega_{r_0}) \otimes \pi_{-s_0}$, for some $0 \le r_0 \le n-1$, and $ s_0 \in {\Z}$. 
(For $r_0=0$, we  set $L_1 (\omega_0)  =V_1(sl(m))$.

The lowest conformal weight of  $U_{r,s}$  and $U_{r,-s}$ are
\bea  h[r,s]&=& \frac{ (\omega_r, \omega_r + 2 \rho)}{2 (m+1)} +  \frac{ ( s 
\omega_{m-1} , s \omega_{m-1} + 2 \rho)}{2 (m-1)} \nonumber \\
&=& \frac{(m-r) r}{2m} + \frac{s^2 + m s}{2m} =\frac{ (m-r)r + s(s+m)}{2m}.   \nonumber  \\
 &=& \frac{ (\omega_r, \omega_r + 2 \rho)}{2 (m+1)} +  \frac{ ( s 
\omega_{1} , s \omega_{1} + 2 \rho)}{2 (m-1)} = h[r,-s] \nonumber 
\eea

We can choose $(r_0,s_0)$ so that the conformal weight of $U_{r_0,s_0}$ coincides with the conformal weight of the highest weight vector of $M$.

For $s \in {\Z}$, let $\overline  s \in \{ 0, 1, \dots, m-1\}$  be such that 
$s \equiv \overline s \mod(m-1)$.
For $0 \le r_1, r_2 \le m-1$, 
we set 
$r_3= \overline{r_1+ r_2}$.
Recall   from \cite[Theorem 4.4]{AMPP-AIM} that  
$$ V =\bigoplus_{s\in {\Z}} U_{\overline s, s},$$
and that $V$ is generated by $V_{1}(sl(m)) \otimes V_{-1}(sl(m)) \oplus U_{1,1} \oplus U_{n-1,-1}$.

Using the  fusion rules:
\begin{align*}
 L_{1} (\omega_1) \times L_{1}(\omega_r) &= L_{1}(\omega_{\overline{r+1}}),\\
 L_{1} (\omega_{m-1}) \times L_{1}(\omega_r) &= L_{1}(\omega_{\overline{m-1-r}}),\\
 \pi_s \times \pi_{s'} &= \pi_{s+s'}, 
\end{align*}
we conclude that $M$ also contains submodules:
\bea &&  U_{ \overline{r_0+\ell} , s_0 +  \ell} \quad (\ell \in {\Z}).  \label{mod-lcw}\eea

Since $M$ is a highest weight module, it is generated by $U_{r_0,s_0}$  
$$M = V. U_{r_0,s_0} =\bigoplus_{\ell \in {\Z}} U_{ \overline{r_0+\ell} , s_0 +  \ell}. $$
We conclude that $M$ is  $\Z_{\ge 0}$--graded.
By a direct calculation of the lowest conformal weight of $U_{ \overline{r+\ell} , s +  \ell}$ we get that the following statements are equivalent:
\begin{itemize}
\item[(1)] $M$ is $\Z$--graded;
\item[(2)] $h[r+1,s+1]-h[r,s] = \frac{m -r + s}{m} \in {\Z}$ for $r\in \{0, 1, \dots, m-2\}$,  $s \in {\Z}$;
\item[] $h[r-1,s-1]-h[r,s] =- \frac{m -r + s}{m} \in {\Z}$ for $r\in \{1, 2, \dots, m-1\}$,  $s \in {\Z}$;
\item[(3)] $r\equiv s \mod n$.
\end{itemize}

But then  $M$ contains a $V_1(sl(m) ) \otimes  V_{ -1}(sl(m))$--submodule isomorphic to $U_{r, r}$, implying that  $M=V$.
The claim follows.
\end{proof}

Since $\g_{\bar 0} = sl(m) \times sl(m)$ is semisimple, and the categories $KL_1$ and $KL_{-1}$ are semisimple for $sl(m)$,  using Proposition \ref{prop-1} we conclude:
\begin{thm}\label{72} Assume that $\g =psl(m|m)$   for $m \ge 3$  and $k=-1$.   The category $\Kl$ is semisimple. 
 \end{thm}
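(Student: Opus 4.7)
The plan is to deduce the theorem from Proposition \ref{prop-1} together with Corollary \ref{largest}. First, I would note the vertex algebra isomorphism $V_{-1}(psl(m|m)) \cong V_1(psl(m|m))$, which is the $m=n$ specialization of the identification $V_{-1}(sl(m|n)) \cong V_1(sl(n|m))$ already used in the proof of Theorem \ref{newC}. This reduces the statement to proving semisimplicity of $\Kl$ for $V := V_1(psl(m|m))$.

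Next I would verify the two hypotheses of Corollary \ref{largest}. Hypothesis (3), that every highest weight module in $\Kl$ is irreducible, is exactly the content of Proposition \ref{prop-1}, which in fact shows the stronger statement that every such module is isomorphic to $V$ itself. For hypothesis (2), I would invoke the conformal embedding $V_{1}(sl(m)) \otimes V_{-1}(sl(m)) \hookrightarrow V$ established in \cite[Theorem 4.4]{AMPP-AIM}. Any $W \in \Kl$ restricts to a module in $KL_{1}(sl(m)) \otimes KL_{-1}(sl(m))$: local finiteness over $\g_{\bar 0} = sl(m) \oplus sl(m)$ is inherited from the $\g$--local finiteness of $W$ (every finite-dimensional $\g$--module is finite-dimensional as a $\g_{\bar 0}$--module), and the lower bound on conformal weights passes along the embedding because it is conformal. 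Since $V_1(sl(m))$ is rational and $KL_{-1}(sl(m))$ is semisimple by \cite[Theorem 1.2]{AKMPPIMRN}, the restriction of $W$ is completely reducible as a $V_1(sl(m)) \otimes V_{-1}(sl(m))$--module, which is precisely condition (2) of Proposition \ref{klf=kl}.

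Applying Corollary \ref{largest} then yields that $\Kl$ is semisimple, completing the proof. The main (and rather mild) subtlety is the $V_{-1}(psl(m|m)) \cong V_1(psl(m|m))$ identification at the outset; past that, the argument is essentially automatic, since the substantial work---the classification of highest weight $V$--modules via fusion-rule bookkeeping and the decomposition of $V$ over the conformal subalgebra---has already been carried out in Proposition \ref{prop-1} and in \cite[Theorem 4.4]{AMPP-AIM}.
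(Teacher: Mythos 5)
Your proposal is correct and follows essentially the same route as the paper: the one-line derivation preceding Theorem \ref{72} rests on exactly the ingredients you assemble — the identification $V_{-1}(psl(m|m))\cong V_1(psl(m|m))$, Proposition \ref{prop-1} for hypothesis (3), and the semisimplicity of $KL_{\pm 1}(sl(m))$ together with the conformal embedding of \cite[Theorem 4.4]{AMPP-AIM} for hypothesis (2) of Corollary \ref{largest}. Your write-up merely makes explicit the verifications the paper leaves implicit.
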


\subsection{The case $V_{-1}(sl(m|1))$}\label{slmn-1} 
Let $\g = sl(m|1)$. 
Recall \cite{AMPP-AIM} that   there is a conformal embedding
$ V_{-1}(sl(m)) \otimes M_c(1) \hookrightarrow V_{-1}(\g) $
with the following decomposition:
$$V_{-1}(\g) =  \bigoplus_{q \in {\Z} } \pi_q  \otimes M_c (1, -q \sqrt{\tfrac{m-1}{m}} ). $$ 
where we set
$$ c=\frac{1}{\sqrt{m(m-1)}}\begin{pmatrix} I_m&0\\0&m\end{pmatrix}.
$$
so that
$ [c_{\lambda} c] = \lambda$.

For $\ell \in {\Z}$ and $r \in {\C}$, we define the following $V_{-1}(sl(m))\otimes M_c(1)$--module:
$$L[\ell, r] =  \bigoplus_{q \in {\Z} } \pi_ {q+\ell}   \otimes M_c(1,    -(q+r) \sqrt{\tfrac{m-1}{m}}).$$
 Note that  for each $s \in {\Z}$ we have
 $ L[\ell +s, r+s] = L[\ell, r]. $

Recall that $V_{-1}(sl(m \vert n) )$ is realized as a vertex subalgebra of $M_{m} \otimes F$, where $M_m $ is the Weyl vertex algebra with generators $a^{\pm} _i$, $i=1, \dots, m$, and $F_n $ is the Clifford vertex algebra with generators $\Psi^{\pm} _i $ (cf. \cite{KW}, \cite{AMPP-AIM}).

In this section we set $F=F_1$,  $\Psi^{\pm} = \Psi^{\pm} _1$. We consider $V_{-1}(\g)$ as subalgebra of $M_m \otimes F$.

\begin{prop}\label{prop-0} For every $\ell \in {\Z}$, $L[\ell, -\tfrac{  \ell  }{m-1}]$ has   the structure of an  irreducible $V_{-1}(\g)$--module. It is realized as 
$$ L[\ell, -\tfrac{\ell  }{m-1}] = V_{-1}(\g) w_{\ell}  $$
where $w_{\ell} = : ( a^+ _1)^{\ell-1}:\Psi^+$ for $\ell \in {\Z}_{> 0}$ and
$w_{\ell}  = : ( a^- _{m})^{\vert \ell \vert }:$ for $\ell \in {\Z}_{\le 0}$.

Moreover, $L[\ell, -\tfrac{\ell  }{m-1}]_{top} = U(\g). w_{\ell}$ is an {\sl atypical} irreducible, finite-dimensional $\g$--module.  
\end{prop}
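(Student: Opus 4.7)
The plan is to exploit the free-field realization $V_{-1}(\g)\subset M_m\otimes F$ together with the conformal embedding $V_{-1}(sl(m))\otimes M_c(1)\hookrightarrow V_{-1}(\g)$ recalled above, and to realize $L[\ell,-\tfrac{\ell}{m-1}]$ concretely as the submodule $V_{-1}(\g).w_\ell$. First I would verify that $w_\ell$ is a well-defined vector in $M_m\otimes F$ and compute its $\widehat{sl(m)}$-weight, conformal weight, and $c(0)$-eigenvalue. For $\ell>0$, $w_\ell=\,:(a^+_1)^{\ell-1}:\Psi^+$ is annihilated by the positive $\widehat{sl(m)}$-modes; the $sl(m)$-weight is $\ell\,\omega_1$ and the $c$-charge matches the value prescribed for the $q=0$ summand of $L[\ell,-\tfrac{\ell}{m-1}]$, so that $V_{-1}(sl(m))\otimes M_c(1).w_\ell$ is isomorphic to $\pi_\ell\otimes M_c(1,\tfrac{\ell}{m-1}\sqrt{\tfrac{m-1}{m}})$. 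The analogous computation for $\ell\le 0$ uses $w_\ell=\,:(a^-_m)^{\vert\ell\vert}:$, which realises a highest-weight vector for $\pi_\ell=L_{-1}(\vert\ell\vert\omega_{m-1})$ tensored with the appropriate Fock summand.

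Next I would analyse the action of the odd root vectors of $\g$ on $w_\ell$. In the free-field realization these are bilinears involving $a^\pm_i$ and $\Psi^\mp$, whose modes shift the $sl(m)$-weight and the $c(0)$-charge by exactly the amount that moves one summand in the decomposition of $L[\ell,-\tfrac{\ell}{m-1}]$ to the next. Iterating shows that $V_{-1}(\g).w_\ell$ contains a highest-weight vector of each summand $\pi_{\ell+q}\otimes M_c(1,-(q-\tfrac{\ell}{m-1})\sqrt{\tfrac{m-1}{m}})$ for every $q\in\mathbb Z$, and conversely no other summands appear because the odd generators only shift within this orbit. Combined with step one, $V_{-1}(\g).w_\ell$ has exactly the $V_{-1}(sl(m))\otimes M_c(1)$-decomposition of $L[\ell,-\tfrac{\ell}{m-1}]$, which thereby inherits the claimed $V_{-1}(\g)$-module structure.

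For irreducibility I would use that each summand $\pi_{\ell+q}\otimes M_c(1,\cdot)$ is irreducible over $V_{-1}(sl(m))\otimes M_c(1)$ (by admissibility of $V_{-1}(sl(m))$, cf.\ \cite{A-1994}, and irreducibility of Heisenberg Fock modules of generic charge). Any nonzero $V_{-1}(\g)$-submodule $N$ must meet some summand nontrivially, hence contain that entire summand; the transitive shifting provided by the odd generators then forces $N$ to contain $w_\ell$, whence $N=L[\ell,-\tfrac{\ell}{m-1}]$.

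Finally, for the top component: a direct comparison of lowest conformal weights across the summands shows that $h[\ell+q,-q+\tfrac{\ell}{m-1}]$ is minimized uniquely at $q=0$, whence $L[\ell,-\tfrac{\ell}{m-1}]_{top}=U(\g).w_\ell$. Since $w_\ell$ is annihilated by the positive nilpotent of $\g$, this top is a highest-weight $\g$-module; its finite-dimensionality follows because it sits inside the finite-dimensional $sl(m)$-weight space of minimal conformal weight in $\pi_\ell$ tensored with a one-dimensional Fock piece. For atypicality, since $\g=sl(m\vert 1)$ has defect one, I would exhibit an odd isotropic root orthogonal to the $\g$-highest weight of $w_\ell$ plus $\rho$, or equivalently note that the $\g$-character of $U(\g).w_\ell$ is strictly smaller than that of the Kac module induced from its $\g_{\bar 0}$-socle, so the Kac module is reducible. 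The main obstacle is the second step, namely careful tracking of the action of the odd generators as normally-ordered products in $M_m\otimes F$ to confirm that $V_{-1}(\g).w_\ell$ equals $L[\ell,-\tfrac{\ell}{m-1}]$ with the precise $q$-shifts and no extra summands; atypicality of the top then follows once the precise highest weight has been pinned down.
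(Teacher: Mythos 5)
Your overall framework (free-field realization plus the conformal embedding $V_{-1}(sl(m))\otimes M_c(1)\hookrightarrow V_{-1}(\g)$) matches the paper's, and your weight/charge bookkeeping and the identification of the top component are fine. However, the decisive step of your argument --- that the odd generators of $\g$ move nontrivially between \emph{every} pair of adjacent summands $\pi_{\ell+q}\otimes M_c(1,\cdot)$ and $\pi_{\ell+q\pm1}\otimes M_c(1,\cdot)$, in both directions, so that a submodule containing one summand contains them all --- is exactly what you defer as ``the main obstacle,'' and without it you have neither the claimed decomposition nor irreducibility. This is not a routine verification: a priori some of these transition maps could vanish, which would leave room for a proper submodule. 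The paper avoids this computation entirely by invoking the Kac--Wakimoto result that $M_m\otimes F$ is a \emph{completely reducible} $V_{-1}(gl(m|1))$--module with $M_m\otimes F=\bigoplus_{\ell}V_{-1}(gl(m|1)).w_\ell$; irreducibility of $V_{-1}(\g).w_\ell$ then comes for free, and the identification $V_{-1}(\g).w_\ell=L[\ell,-\tfrac{\ell}{m-1}]$ follows from the known decomposition of $V_{-1}(\g)$ itself over $V_{-1}(sl(m))\otimes M_c(1)$ together with the fusion rules $\pi_q\times\pi_{\ell}=\pi_{q+\ell}$, applied to the single summand $V_{-1}(sl(m))\otimes M_c(1).w_\ell\cong\pi_\ell\otimes M_c(1,-r\sqrt{\tfrac{m-1}{m}})$. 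You should either carry out the odd-generator computation in full or, better, import the complete-reducibility statement from \cite{KW} as the paper does.

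On atypicality, your two suggestions are both workable, but note the paper's shorter route: the top component $U(\g).w_\ell$ visibly has all weight spaces one-dimensional, so it cannot be a Kac module $Ind_{\g_{\bar 0}}^{\g}(L_{\g_{\bar 0}}(\l))$, and by the equivalence recalled at the start of the section (the Kac module of highest weight $\l$ is irreducible iff $\l+\rho$ is typical) the module is atypical. The explicit isotropic-root computation you propose is carried out in the paper only as a supplementary remark.
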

\begin{proof}
The results from \cite{KW}  give that $M_m \otimes F$ is a completely reducible  $V_{-1}(gl(m \vert 1))$--module so that
$$ M_m \otimes F = \bigoplus _{\ell \in {\Z}} V_{-1}(gl(m \vert 1)). w_{\ell}. $$
This implies that $V_{-1} (\g) . w_{\ell}$ is an irreducible $V_{-1}(\g)$--module.  
Set 
$r  =  -\ell / (m-1)$.

By identifying the highest weights, we get
$$ V_{-1} (sl(m)) \otimes M_{c} (1) . w_{\ell} = \pi_{\ell} \otimes M_{c} (1, -r \sqrt{\tfrac{m-1}{m}}  ). $$
By using fusion rules for $ V_{-1} (sl(m)) \otimes M_{c} (1)$--modules we get: 
\bea
V_{-1} (\g) . w_{\ell} &=& V_{-1} (\g). \pi_{\ell} \otimes M_{c} (1,  -r \sqrt{\tfrac{m-1}{m}}  ) \nonumber \\
&=&  \bigoplus_{q \in {\Z} } ( \pi_q  \otimes M_c(1, -q \sqrt{\tfrac{m-1}{m}} ) ).  \pi_{\ell} \otimes M_{c} (1, -r  \sqrt{\tfrac{m-1}{m}}  )  \nonumber \\
&=&   \bigoplus_{q \in {\Z} } \pi_ {q+\ell}   \otimes M_c(1,    -(q+r) \sqrt{\tfrac{m-1}{m}} )  = L[\ell, r]. \nonumber  \eea
The top component is then an irreducible $\g$--module $U(\g). w_{\ell}$ which has all $1$--dimensional weight spaces. Therefore $U(\g). w_{\ell}$  can not isomorphic to the Kac module obtained from  the corresponding $gl(m)$--module. Therefore $U(\g). w_{\ell}$ is atypical. (In Remark \ref{atyp-calculation}  below we   check the atypicality by computing explicitly the highest weights).
The claim follows.
\end{proof}

The same argument of Proposition \ref{prop-1} yields:
\begin{prop} \label{prop-2}Let $m \ge 3$. Assume that $M$ is any highest weight $V_{-1}(\g)$--module in $KL_{-1}$.  Then $M$ is irreducible and there is $\ell \in {\Z}$ such that
 $$M  \cong L[\ell, -\tfrac{ \ell  }{m-1}]. $$
 In particular, $M_{top}$ is an  {\sl atypical} $\g$--module.  \end{prop}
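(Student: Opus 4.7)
The plan is to follow the same strategy as in Proposition \ref{prop-1}, replacing the subalgebra $V_1(sl(m))\otimes V_{-1}(sl(m))$ by the conformally embedded pair $V_{-1}(sl(m))\otimes M_c(1)$. By \cite[Theorem 1.2]{AKMPPIMRN} the category $KL_{-1}(sl(m))$ is semisimple, and the Heisenberg algebra $M_c(1)$ admits only Fock-type irreducibles $M_c(1,\mu)$. I expect this to force any $M$ in $KL_{-1}$ to decompose as a direct sum of $V_{-1}(sl(m))\otimes M_c(1)$-submodules of the form $\pi_s\otimes M_c(1,\mu)$ for $s\in\ZZ$, $\mu\in\CC$.

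I would then locate a component $\pi_\ell\otimes M_c(1,-r\sqrt{\tfrac{m-1}{m}})$ containing the highest weight vector $v$ of $M$, for suitable $\ell\in\ZZ$ and $r\in\CC$. Using the explicit decomposition $V_{-1}(\g)=\bigoplus_{q\in\ZZ}\pi_q\otimes M_c(1,-q\sqrt{\tfrac{m-1}{m}})$ together with the fusion rules $\pi_a\times\pi_b=\pi_{a+b}$ at level $-1$ and the Heisenberg fusion $M_c(1,\lambda)\times M_c(1,\mu)=M_c(1,\lambda+\mu)$, the $V_{-1}(\g)$-orbit of $v$ will hit every summand $\pi_{q+\ell}\otimes M_c(1,-(q+r)\sqrt{\tfrac{m-1}{m}})$. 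Since $M$ is generated by $v$, this would give $M\cong L[\ell,r]$.

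The next step is to impose that the conformal weights of $M$ are bounded below and differ by non-negative integers, as required for any object of $KL_{-1}$. A direct Sugawara plus Heisenberg weight calculation shows that the conformal weight of the lowest vector of $\pi_{q+\ell}\otimes M_c(1,-(q+r)\sqrt{(m-1)/m})$, as a function of $q\in\ZZ$, is a quadratic with leading coefficient $\tfrac12$, and that consecutive differences lie in $\ZZ$ iff $\ell+r(m-1)\equiv 0\pmod{m}$. Invoking the relabeling identity $L[\ell+s,r+s]=L[\ell,r]$ for $s\in\ZZ$, this integrality condition can always be reduced to the canonical normalization $r=-\tfrac{\ell}{m-1}$. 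Proposition \ref{prop-0} then identifies $L[\ell,-\tfrac{\ell}{m-1}]$ as an irreducible $V_{-1}(\g)$-module whose top component is the atypical finite-dimensional $\g$-module $U(\g).w_\ell$, yielding both conclusions of the proposition.

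The hard part will be the third step: carrying out the conformal weight computation precisely enough to pin down the integrality condition, handling the piecewise formula for $h_{\pi_s}$ (since $\pi_s$ is modeled on $\omega_1$ for $s>0$ and on $\omega_{m-1}$ for $s<0$), and confirming that no spurious parameter values survive beyond those absorbed by the relabeling. One must also verify that the minimal conformal weight of $L[\ell,-\ell/(m-1)]$ is attained at $v$, so that the identification of $M$ with the concrete module constructed in Proposition \ref{prop-0} is legitimate and the atypicality claim on $M_{top}$ transfers from Proposition \ref{prop-0}.
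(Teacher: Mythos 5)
Your proposal is correct and follows essentially the same route as the paper: decompose $M$ over the conformal pair $V_{-1}(sl(m))\otimes M_c(1)$, use the fusion rules to identify $M$ with some $L[\ell,r]$, impose conformal-weight constraints coming from the conformal embedding to pin down $r$, and quote Proposition \ref{prop-0} for irreducibility and atypicality. The only (cosmetic) difference is that the paper extracts $r$ from the two sign conditions $h[\ell-1,r-1]-h[\ell,r]\in\ZZ_{>0}$ and $h[\ell+1,r+1]-h[\ell,r]\in\ZZ_{\ge 0}$, obtaining $r=-\tfrac{\ell+m}{m-1}$ for $\ell\ge 0$ and $r=-\tfrac{\ell}{m-1}$ for $\ell\le 0$ before relabeling, whereas you use the integrality condition $\ell+r(m-1)\equiv 0 \pmod m$ together with the relabeling $L[\ell+s,r+s]=L[\ell,r]$ — both land on the same normal form.
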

\begin{proof}
Since $M$ is a highest weight $V_{-1}(\g)$--module in $\Kl$, its  top component $M_{top}$ must contain a singular vector $w$ such that 
$$ V_{-1} (sl(m)) \otimes M_c(1) . w \cong \pi_{\ell} \otimes M_c(1, -r \sqrt{\tfrac{m-1}{m}}  )$$
for certain $\ell \in {\Z}$ and $r \in {\C}$.   By using fusion rules for $ V_{-1} (sl(m)) \otimes M_{c} (1)$--modules we get: 
\bea
M  &=& V_{-1} (\g). \pi_{\ell} \otimes M_{c} (1,  -r \sqrt{\tfrac{m-1}{m}}  ) \nonumber \\
&=&   \bigoplus_{q \in {\Z} } \pi_ {q+\ell}   \otimes M_c(1,    -(q+r) \sqrt{\tfrac{m-1}{m}} )  = L[\ell, r]. \nonumber  \eea

Let
$h[\ell, r]$ denotes the conformal weight of the top component of $\pi_{\ell} \otimes 
  M_c(1,    -r \sqrt{\tfrac{m-1}{m}} ).$ It is given by the formula
  $$ h[\ell, r] = \frac{ \ell ^2 + \vert \ell \vert m}{2m} + r^2 \frac{m-1}{2m}. $$
  Since  the embedding $V_{-1}(sl(m)\otimes M_c(1)\hookrightarrow V_{-1}(sl(m|1))$ is conformal, the  conformal weight of $\pi_{q+\ell}\otimes M_c(1, -(q+r)\sqrt{\frac{m-1}{m}})$ must differ form 
  $h[\ell,r]$ by a positive integer. In particular,  we must have the following conditions:
\begin{itemize}
  \item $h[\ell -1, r-1] - h[\ell, r] \in {\Z}_{> 0}, $
   \item $h[\ell +1, r+1] - h[\ell, r] \in {\Z}_{\ge 0}. $
  \end{itemize}
These relations have the  solutions  $r = -\frac{\ell+m}{m-1}$ for $\ell \geq 0$ and
$r = -\frac{\ell}{m-1}$ for $\ell \le 0$ (the solution is indeed unique if $\ell\ne 0$). Therefore
\begin{align*}
M &\cong L[\ell,  -\frac{\ell+m}{m-1}] = L[\ell+1, -\frac{\ell +1}{m-1}] \quad (\ell \geq 0),\\
M &\cong L[\ell,  -\frac{\ell}{m-1}]  \quad (\ell \le 0). 
\end{align*}
The atypicality of $M_{top}$ follows from Proposition \ref{prop-0}.

\end{proof}
  \begin{rem}  \label{atyp-calculation} Less conceptually, we can check the atypicality by computing explicitly the highest weights.
  Identify $\h^*$ with $\{r_0\d_1+\sum_{i=1}^m r_i\varepsilon_i\mid r_0+\sum_{i=1}^mr_i=0\}$. Then
 the highest weight of $:(a^+_1)^\ell\Psi^+:$ ($\ell >0$) is  
\begin{align*}
& \l_\ell^+:=\ell \omega_1+(m+\ell)\left(-\frac{1}{m(m-1)}(\varepsilon_1+\cdots+\varepsilon_m)+\frac{1}{m-1}\d_1\right)
\\
&=\ell(\varepsilon_1-\frac{1}{m}\sum_{i=1}^m\varepsilon_i)-\frac{m+\ell}{m(m-1)}\sum_{i=1}^m\varepsilon_i+\frac{m+\ell}{m-1}\d_1\\
&=(\ell-\frac{\ell}{m}+\frac{\ell+m}{m-m^2})\epsilon_1+\sum_{i=2}^m\frac{1+\ell}{1-m}\varepsilon_i+\frac{m+\ell}{m-1}\d_1.
\end{align*}
 Since $
 \rho=\sum_{i=1}^m(\frac{m}{2}-i+1)\varepsilon_i-\frac{m}{2}\d_1
 $, we have 
\begin{align*}
\l_\ell^++\rho&=\frac{m\ell-2\ell-1}{m-1}\varepsilon_1+\sum_{i=1}^m(1-i+\frac{1+\ell}{1-m}+\frac{m}{2})\varepsilon_i+(\frac{m+\ell}{m-1}-\frac{m}{2})\d_1\\&=\frac{m\ell-2\ell-1}{m-1}\varepsilon_1+(\frac{m}{2}-\frac{m+\ell}{m-1})\varepsilon_2+\sum_{i=3}^m(1-i+\frac{1+\ell}{1-m}+\frac{m}{2})\varepsilon_i+(\frac{m+\ell}{m-1}-\frac{m}{2})\d_1,
\end{align*}
and $(\l_\ell^++\rho|\d_1-\varepsilon_2)=0$.
\par
 The highest weight of $:(a^-_m)^{-\ell}:$ is  
 \begin{align*}&\l^-_\ell:=-\ell \omega_{m-1}+\ell\left(-\frac{1}{m(m-1)}(\varepsilon_1+\cdots+\varepsilon_m)+\frac{1}{m-1}\d_1\right)\\
 &=\ell\left(-\sum_{i=1}^{m-1}\varepsilon_i+\frac{m-2}{m-1}\sum_{i=1}^{m}\varepsilon_i+\frac{1}{m-1}\d_1 \right)\\
 &=\ell\left(\frac{1}{1-m}\sum_{i=1}^{m-1}\varepsilon_i+\frac{m-2}{m-1}\varepsilon_m+\frac{1}{m-1}\d_1 \right).
 \end{align*}
 Hence
  \begin{align*}&\l^-_\ell+\rho=\ell\left(\frac{1}{1-m}\sum_{i=1}^{m-1}\varepsilon_i+\frac{m-2}{m-1}\varepsilon_m+\frac{1}{m-1}\d_1 \right)+\rho\\
 &=\sum_{i=1}^{m-1}(\frac{m}2-i+1-\frac{\ell}{m-1})\varepsilon_i+(1+\frac{(m-2)\ell}{m-1}-\frac{m}{2})\varepsilon_m+(\frac{\ell}{m-1}-\frac{m}{2})\d_1 
 \end{align*}
 and $(\l^-_\ell+\rho|\d_1-\varepsilon_1)=0$.
 \end{rem}

In the following theorem we need to use results 	from  \cite{Ger} and \cite{VDJ} (see also \cite{GS}), which we recall in our setting.  Let $\mathcal L^{(k)}$ be the category of finite dimensional  $sl(m \vert 1)$-modules on which the center acts with Jordan blocks of size at most $k$.
Let $\l,\mu$ be dominant weights, and let $\rho_1$ be the half sum of the positive odd roots. Combining \cite[Proposition 6.1.2. (iii)]{Ger}  and \cite[Lemma 6.6]{VDJ} one has 
\begin{prop}\label{G} If $\lambda$ has  atypicality $1$, then 
$$Ext_{\mathcal L^{(k)}}(L(\lambda),L(\mu))=\begin{cases} \C\quad&\text{if $\lambda=\mu\pm 2\rho_1$,}\\ 0 \quad&\text{otherwise.}\end{cases}$$
\end{prop}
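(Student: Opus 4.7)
The plan is to reduce Proposition \ref{G} to a Loewy-structure calculation in the atypicality-$1$ block of the category of finite-dimensional $sl(m|1)$-modules, and then read off $\mathrm{Ext}^1$ directly from the radical layers of the indecomposable projectives. Since $\mathrm{def}\,sl(m|1)=1$, ``atypicality $1$'' is exactly the maximally atypical case, and all such weights sit inside a single orbit under translation, so there is essentially one kind of block to study.

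First I would analyze the Kac modules $K(\lambda)=U(\g)\otimes_{U(\g_{\bar 0}\oplus\g_1)}L_{\g_{\bar 0}}(\lambda)$ (with $\g_1$ acting trivially on the cyclic generator). For atypical $\lambda$, I would produce the unique singular vector in $K(\lambda)$ obtained from applying the top exterior power of $\g_{-1}$, restricted along the isotropic direction orthogonal to $\lambda+\rho$, to the cyclic generator; then I would check that this gives a non-split short exact sequence
\begin{equation*}
0\to L(\lambda-2\rho_1)\to K(\lambda)\to L(\lambda)\to 0.
\end{equation*}
This identifies atypicality-$1$ dominant weights as chains $\{\lambda_n\}_{n\in\Z}$ with $\lambda_{n+1}=\lambda_n+2\rho_1$, all sharing one central character.

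Next I would invoke BGG reciprocity $[P(\lambda_n):K(\mu)]=[K(\mu):L(\lambda_n)]$ for the indecomposable projective cover $P(\lambda_n)$ in finite-dimensional $\g$-mod. Combined with the two-step structure of $K(\lambda)$ above, this forces a Kac flag of $P(\lambda_n)$ with successive quotients $K(\lambda_n)$ and $K(\lambda_{n+1})$, yielding the Loewy picture with top $L(\lambda_n)$, middle $L(\lambda_{n-1})\oplus L(\lambda_{n+1})$, and socle $L(\lambda_n)$. Reading off $\mathrm{Ext}^1_{\g\text{-mod}^{fd}}(L(\lambda_n),L(\mu))$ as the multiplicity of $L(\mu)$ in $\mathrm{rad}(P(\lambda_n))/\mathrm{rad}^2(P(\lambda_n))$ gives $\C$ when $\mu=\lambda_n\pm 2\rho_1$ and $0$ otherwise.

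Finally, the passage to $\mathcal L^{(k)}$ is painless: adjacent atypical weights share a central character, so any length-$2$ non-split extension exhibits only a size-$2$ Jordan block of the central element, which is allowed in $\mathcal L^{(k)}$ for every $k\geq 2$; for $k=1$ the non-split extensions involving $\lambda\ne\mu$ still survive (different central characters mean the center acts diagonally). The main obstacle is the Loewy computation of $P(\lambda_n)$: one must exclude longer radical filtrations, which is cleanest either by translation-functor arguments identifying the atypicality-$1$ block with the path algebra of a type-$A_\infty$ double quiver modulo commuting relations (this is Germoni's route), or by explicit character identities for the Kac modules (Van der Jeugt's route). Either way, the sharpness of the answer relies crucially on atypicality being equal to $1$, so that each $K(\lambda)$ has length exactly $2$.
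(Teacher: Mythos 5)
The paper does not actually prove Proposition~\ref{G}: it is recorded as a combination of two citations, \cite[Proposition 6.1.2(iii)]{Ger} and \cite[Lemma 6.6]{VDJ}, and the only ``argument'' offered is that these two statements together yield the displayed formula. Your proposal instead reconstructs the proof that sits behind those citations: length-two Kac modules in the singly atypical (here, maximally atypical) blocks, BGG reciprocity to get the Kac flag of $P(\lambda)$, the diamond-shaped Loewy picture with top and socle $L(\lambda)$ and middle layer $L(\lambda-2\rho_1)\oplus L(\lambda+2\rho_1)$, and then $\mathrm{Ext}^1$ read off from $\mathrm{rad}P/\mathrm{rad}^2P$. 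This is a legitimate and more self-contained route, and you are honest that the genuinely hard step --- identifying the socle of $K(\lambda)$ as $L(\lambda-2\rho_1)$ and excluding longer radical filtrations of $P(\lambda)$ --- is exactly the content of Germoni's and Van der Jeugt's results; so in substance you are expanding the same two references the paper leans on rather than finding an independent argument. What your version buys is visibility of \emph{why} the answer has the stated shape (in particular why there are no self-extensions, which is the only feature the paper actually uses in Theorem~\ref{complete-red-slm1}); what it costs is that the Loewy computation is asserted, not done.

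One genuine slip to fix: your final remark that for $k=1$ ``different central characters mean the center acts diagonally'' is backwards. The weights $\lambda$ and $\lambda\pm2\rho_1$ are linked, hence share the \emph{same} central character; if they did not, $\mathrm{Ext}^1$ would vanish outright. The correct reason a non-split length-two extension $0\to L(\mu)\to E\to L(\lambda)\to 0$ with $\lambda\neq\mu$ automatically lies in $\mathcal L^{(1)}$ (and a fortiori in every $\mathcal L^{(k)}$) is that for a central element $z$ with common eigenvalue $\chi(z)$, the operator $z-\chi(z)$ kills both composition factors and therefore induces a $\g$-module map $L(\lambda)\to L(\mu)$ between non-isomorphic simples, which must be zero; hence $z$ acts semisimply on $E$. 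With that correction, the comparison between $\mathrm{Ext}^1$ in finite-dimensional $\g$-modules and in $\mathcal L^{(k)}$ is clean, and your outline matches what the cited sources deliver.
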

 Note that the non-trivial extensions above were explicitly realized  inside  a  Kac module, which is a weight module. Since our category $\sKlf$ is semi-simple, such extensions cannot appear for $V_k(\g)$--modules. We shall now see that there are no non-trivial extensions in the larger  category $\Kl$.

\begin{thm} \label{complete-red-slm1} Assume that $\g = sl(m|1)$ for $m \ge 2$  and $k=-1$.  Let $M$ be an irreducible  $V_{-1} (\g)$--module  in  $KL_{-1}$. Then 
\begin{itemize}
\item[(1)] $ \mbox{Ext}^1 (M_{top}, M_{top}) = \{ 0 \} $
in the category of finite-dimensional $\g$--modules.
\item[(2)] $ \mbox{Ext}^1 (M, M) = \{ 0 \} $
in the category $KL_{-1}$.
\item[(3)] The category $\Kl$ is semisimple.
\end{itemize}
\end{thm}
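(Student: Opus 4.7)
The plan is to deduce parts (2) and (3) from Theorem \ref{ext-kl-large} by verifying its two hypotheses: (a) semisimplicity of $\sKlf$, and (b) the $\mbox{Ext}^{1}$-vanishing of the top components of irreducibles in $\Kl$ (which is exactly part (1)). Once both are in place, Theorem \ref{ext-kl-large} delivers semisimplicity of $\Kl$ together with $\sKlf = \Kl$, proving (3); part (2) is then automatic in a semisimple category.

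For (a), when $m \ge 3$ I will combine Proposition \ref{prop-2} with Theorem \ref{TT}: Proposition \ref{prop-2} asserts that every highest-weight module in $KL_{-1}$ is irreducible, hence a fortiori every highest-weight module in $\sKlf$ is irreducible, and Theorem \ref{TT} concludes. When $m = 2$, $k=-1$ is the critical level of $sl(2|1) = C(2)$, and semisimplicity of $\sKlf$ is already provided by Theorem \ref{newC}.

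For (1), I will use the structural output of Proposition \ref{prop-2}: every irreducible $M \in \Kl$ for $m \ge 3$ has the form $L[\ell, -\tfrac{\ell}{m-1}]$, and crucially $M_{top}$ is \emph{atypical}. Since $\mathrm{def}(sl(m|1)) = 1$, the atypicality of $M_{top}$ equals exactly $1$. Proposition \ref{G} then applies: a non-trivial self-extension of an irreducible $L(\lambda)$ of atypicality $1$ would force $\lambda = \lambda \pm 2\rho_1$, i.e.\ $2\rho_1 = 0$, which is impossible because $\rho_1$ is a non-zero sum of positive odd roots of $sl(m|1)$. For $m=2$, a parallel verification is needed using the explicit classification of irreducibles in $\Kl$ for $V_{-1}(sl(2|1))$ at critical level (coming from the free-field realization underlying Theorem \ref{newC}), whose top components are likewise atypical of atypicality $1$.

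The main obstacle is (1), because it relies on the precise structural classification in Proposition \ref{prop-2}; in particular, on the atypicality assertion for $M_{top}$, which is what anchors the whole argument to the concrete realization $M = V_{-1}(\g).w_{\ell}$ inside $M_m \otimes F$. Once atypicality of each $M_{top}$ is granted, the $\mbox{Ext}^{1}$-vanishing collapses to the elementary observation $2\rho_1 \ne 0$, and Theorem \ref{ext-kl-large} finishes the rest with no further input.
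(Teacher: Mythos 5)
Your proposal follows essentially the same route as the paper: use the classification of irreducibles (Proposition \ref{prop-2}) to see that each $M_{top}$ is atypical of atypicality $1$, invoke Proposition \ref{G} for the vanishing of self-extensions, and then feed (1) together with semisimplicity of $\sKlf$ into Theorem \ref{ext-kl-large}. You are in fact a bit more explicit than the paper on two points it leaves implicit --- that $\lambda=\mu$ in Proposition \ref{G} would force $2\rho_1=0$, and that the case $m=2$ (where $k=-1$ is critical for $sl(2|1)$ and Proposition \ref{prop-2} does not apply) needs the separate input of Theorem \ref{newC} --- so the argument matches.
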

\begin{proof}
The classification of irreducible modules in $\Kl$ implies that the top component $M_{top}$ of an irreducible module in $\Kl$ is an irreducible highest weight $\g$--module. Since ${\rm def}\,sl(m|1)=1$, the atypicality of $M_{top}$ is at most $1$. Then  assertion  (1)   follows from Proposition \ref{G}.
The assertions (2) and  (3) follow from (1) and semisimplicity of $\sKlf$  by using Theorem \ref{ext-kl-large}.
 \end{proof}

\begin{rem}\label{daCY}
Theorem \ref{complete-red-slm1} generalizes to the whole category $KL_{-1}(sl(m|1))$ the  semisimplicity result of Creutzig and Yang \cite{CY}, who deal with modules of finite length  with semisimple $\h$-action in $KL_{-1}$ for $V_{-1}(sl(m|n)) =V_1(sl(n|m))$ in the more general case $m\ge 2$, $n \ge 1$. Both results rely on the classification of irreducible modules in $KL_{-1}$, that we construct  using the conformal embedding 
$V_{-1}(\g)\hookrightarrow M_m\otimes F$, whereas  Creutzig and Yang use tensor categories and induced modules:  see \cite[Corollary 6.11]{CY}.
\end{rem} 
\section{The category $KL_k$ is not semisimple for $\g=sl(m \vert 1)$ and  $k\in {\Z}_{>0}$.}
\label{non-semisimple}

Theorem \ref{complete-red-slm1}  shows that indecomposable non-irreducible modules in $\Kl$ do not exist for $k=-1$.

 Using Zhu's algebra theory in \cite{GS}, the authors construct indecomposable weak $V_k (sl(m\vert 1))$--modules for $k=1$ on which the element $L (0)$ of the Virasoro algebra does not act semisimply (these modules are also  called logarithmic modules).
 Note that the level  $k=1$ is neither  conformal nor collapsing  for $\g= sl( m \vert 1)$.

 In this section we shall first refine the example presented in \cite{GS} and show that even smaller category $\sKlf$ is not semisimple for $k=1$. Next we shall extend this result for   $k\in {\Z}_{>0}$.
 
  Recall that the  vertex algebra $V_1(\g)$ is realized as a subalgebra of $M \otimes F_{m}$, where $M= M_1$ is the Weyl vertex algebra generated by $a^{\pm} = a^{\pm} _1$, and $F_{m}$ the Clifford vertex algebra generated by $\Psi  ^{\pm}   _i$, $i=1, \dots, m$ (cf. \cite{KW}).
 Even generators of  $V_1(\g)$ are realized by 
 $$ E_{i,j} := :\Psi^+_i \Psi^- _j:, \quad i,j =1, \dots, m, $$
 and odd generators by 
 $$ E_{1, j+1}:= :a^+ \Psi^- _j:, \  E_{ j+1,1}:= :a^- \Psi^+ _j:, \quad j=1, \dots, m. $$
 
 Define $\vert m\rangle = :\Psi_1 ^+ \cdots \Psi_m ^+: \in F_{m}$. We know from \cite{KW} that \bea :(a^+)^{\ell}: \otimes \vert m> \label{sing-ell} \eea
 is a singular vector for $V_1(\g)$ for each $\ell \in {\Z}_{\ge 0}$, and it generates an irreducible, highest weight $V_1(\g)$--module. In order to construct  indecomposable, highest weight modules, we need to allow that  $\ell$ in the formula (\ref{sing-ell}) is a negative integer in certain sense.  In order to achieve this we shall  consider  a  larger vertex algebra containing $M\otimes F_{m}$ such that formula (\ref{sing-ell}) makes sense for $\ell$ negative. Fortunately, there is a nice construction of the vertex algebra  $\Pi(0)$ obtained using a localisation of the Weyl vertex algebra $M$. The  vertex algebra $\Pi(0)$ was orginally constructed in \cite{BDT}, and  has appeared  recently in realisation of certain vertex algebras and their modules    \cite{efren}, \cite{A-CMP19},  \cite{AdP-2019}, \cite{AKR-2021}.
 
 Let $L= {\Z}c + {\Z} d$ be the rank two  lattice such that
 $$ \langle c, d \rangle = 2, \ \langle c, c \rangle = \langle d , d \rangle = 0. $$
 Let $V_L = M(1) \otimes {\C}[L]$ be the associated lattice vertex algebra (cf. \cite{K2}). Then $\Pi(0)$ is realized as the following subalgebra of $V_L$:
 $$ \Pi(0) = M(1) \otimes {\C}[\Z c]. $$
 There is an embedding of $M$ into $\Pi(0)$ such that
 $$ a^+ = e^{c}, \ a^- = - \frac{c(-1)  + d(-1)}{2} e^{-c}. $$
 Then $e^{-c}$ plays the role of the  inverse $(a^+)^{-1}$ of $a^+$.
 \begin{thm} \label{free-field-indecom} Define 
 $$\widetilde w := (a^+)^{-m} \otimes \vert m\rangle = e^{-m c} \otimes \vert m\rangle\in \Pi(0) \otimes F_m.$$ Then we have:
 \begin{itemize}
 \item $\widetilde W  = V_1(\g) \widetilde w$ is a highest weight  $V_1(\g)$--module in the category $\sKlf$.
 \item $\widetilde W$ is reducible and it contains a proper submodule isomorphic to $V_1(\g).$
 \end{itemize}
 In particular, the category $\sKlf$ is not semisimple for $k=1$.
 \end{thm}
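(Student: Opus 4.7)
The plan is to establish three facts: (i) $\widetilde w$ is an $\ha$- and $L_\g(0)$-eigenvector that is singular for the affine Lie superalgebra $\ga$ acting on $\Pi(0)\otimes F_m$, so that $\widetilde W$ is a cyclic highest weight module in $\sKlf$; (ii) there is an explicit element of $V_1(\g)$ whose action on $\widetilde w$ produces a nonzero scalar multiple of the vacuum $\vac\otimes\vac$ of $\Pi(0)\otimes F_m$, yielding an embedding $V_1(\g)\hookrightarrow \widetilde W$ as a submodule; and (iii) this submodule is proper because the weight of $\widetilde w$ is nonzero. Non-semisimplicity of $\sKlf$ at $k=1$ then follows from the general fact that a cyclic module cannot decompose as a direct sum of two nonzero submodules whose generators have distinct weights.

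For (i), one first observes that both $e^{-mc}\in\Pi(0)$ and $\vert m\rangle\in F_m$ are primary vectors with respect to the Heisenberg and Clifford conformal structures, respectively, so $\widetilde w$ is an $L_\g(0)$- and $\ha$-eigenvector; membership of $\widetilde W$ in $\sKlf$ then follows once singularity is established. The singular-vector check reduces to an OPE computation in $\Pi(0)\otimes F_m$ with the explicit generators $E_{i,j}=:\Psi^+_i\Psi^-_j:$, $E_{1,j+1}=:a^+\Psi^-_j:$, $E_{j+1,1}=:a^-\Psi^+_j:$: positive Heisenberg modes $c(n),d(n)$ annihilate $e^{-mc}$ for $n>0$, positive Clifford modes annihilate $\vert m\rangle$, and $(\Psi^+_i)_0\vert m\rangle=0$. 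The only slightly delicate case is the zero mode of $E_{j+1,1}$: after expanding $a^-=-\tfrac{1}{2}(c(-1)+d(-1))e^{-c}$, the potentially singular contractions $(e^{-c})_{(n)}e^{-mc}$ with $n\geq 0$ vanish because $\langle -c,-mc\rangle=0$ in $L$, so the surviving contributions are killed by $(\Psi^+_j)_0\vert m\rangle=0$.

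The main difficulty is step (ii). The natural candidate is
$$v \,=\, :E_{1,2}\,E_{1,3}\cdots E_{1,m+1}: \,\in V_1(\g),$$
which in the free-field realisation equals, up to a sign, $:(a^+)^m\,\Psi^-_1\cdots\Psi^-_m:$. For the mode $v_{(N)}$ with $N$ chosen to match the conformal weight of $\vac\otimes\vac$, one expects $v_{(N)}\widetilde w$ to be a nonzero scalar multiple of $\vac\otimes\vac$: on the Clifford side one has $(\Psi^-_1)_0\cdots(\Psi^-_m)_0\vert m\rangle=\pm\vac$, while on the $\Pi(0)$ side the pairing of $e^{mc}$ with $e^{-mc}$ produces $\vac\in\Pi(0)$, since $\langle mc,-mc\rangle=0$ implies that $Y(e^{mc},z)e^{-mc}$ is regular at $z=0$ with nonzero constant term. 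The hard part is identifying the correct $N$, expanding $v_{(N)}$ in terms of Fourier modes of the underlying free fields, and verifying that ``crossed'' contractions between the $a^+$ factors and the $\Psi^-$ factors do not contribute; this last point is immediate because the relevant fields live in commuting tensor factors of $\Pi(0)\otimes F_m$.

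Once $\vac\otimes\vac\in \widetilde W$, the assignment $u\mapsto u\cdot(\vac\otimes\vac)$ defines a nonzero homomorphism $V_1(\g)\to \widetilde W$, injective by simplicity of $V_1(\g)$, which produces the claimed submodule. Since the Cartan weight of $\widetilde w$ is nonzero while $\vac\otimes\vac$ has Cartan weight zero, the cyclic generator $\widetilde w$ cannot belong to $V_1(\g)\cdot(\vac\otimes\vac)$, so the inclusion is proper; being cyclic, $\widetilde W$ cannot decompose as $V_1(\g)\oplus M$ with both summands nonzero, whence $\sKlf$ is not semisimple at level $k=1$.
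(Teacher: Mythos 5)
Your overall strategy coincides with the paper's: realize $\widetilde w$ inside $\Pi(0)\otimes F_m$, check that it is singular, exhibit the vacuum ${\bf 1}_{M\otimes F_m}$ inside $\widetilde W$, and conclude. However, there is a genuine gap at the step where you assert that ``membership of $\widetilde W$ in $\sKlf$ follows once singularity is established.'' It does not: by Definition \ref{KLK}, $\widetilde W$ must be locally finite over $\g$, and for a highest weight module this amounts to the top component $U(\g)\widetilde w$ being finite-dimensional. Singularity of $\widetilde w$ gives no control over $U(\g)\widetilde w$, which for a general singular vector is an infinite-dimensional (indecomposable, hence not locally finite) highest weight $\g$--module; note also that the ambient space $\Pi(0)\otimes F_m$ is not itself in $KL_1$, so nothing is inherited from it. The paper's proof performs exactly this missing computation: applying the zero modes $E_{1,j+1}(0)$, $j=1,\dots,m$, to $\widetilde w$ one finds
$\widetilde W_{top}=\mbox{span}_{\C}\{e^{-sc}\otimes :\Psi^+_{j_1}\cdots\Psi^+_{j_s}:\}$,
of dimension $2^m$. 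This both places $\widetilde W$ in $\sKlf$ and, as a byproduct, exhibits ${\bf 1}=e^{0}\otimes{\bf 1}_{F_m}$ (the $s=0$ term) inside $\widetilde W_{top}$, so the separate normally-ordered-product construction of your step (ii) --- whose mode $N$ you in any case leave unidentified --- becomes unnecessary.

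A second, smaller issue: the properness argument is a non sequitur as written. The submodule $V_1(\g)\cdot{\bf 1}\cong V_1(\g)$ certainly contains vectors of nonzero Cartan weight, so ``the weight of $\widetilde w$ is nonzero'' does not by itself exclude $\widetilde w\in V_1(\g)\cdot{\bf 1}$. You need to add either that $\widetilde w$ and ${\bf 1}$ lie in the same conformal-weight space while the corresponding graded piece of $V_1(\g)$ is just $\C{\bf 1}$, or that a simple module has a unique singular vector up to scalar. Likewise, the closing appeal to ``a cyclic module cannot decompose as a direct sum of two nonzero submodules'' is not a correct general fact (cyclic modules can be decomposable); the statement you want is that a highest weight module is indecomposable because its highest weight space is one-dimensional, which is how the paper concludes. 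Both points are easily repaired, but as stated they do not establish the claim.
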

 \begin{proof}
 The proof that $\widetilde w$ is a singular vector is the same as in the case of the singular vector (\ref{sing-ell}). Therefore $\widetilde W$ is an highest weight $V_k(\g)$--module. By using the action of $E_{1,j+1} (0)$, $j=1, \dots, m$ we see that 
 
 $$ \widetilde W_{top} = \mbox{span}_{\C} \{ e^{-s c} \otimes :\Psi^+ _{j_1} \dots  \Psi^+ _{j_s}:\}, $$
 for $0 \le s \le m$, $ 1 \le j_1 < \cdots < j_s \le m$. So
 $\dim  \widetilde W_{top} = 2^m$. This implies that $\widetilde W$  is in the category $\sKlf$. Moreover $\widetilde w$  is a highest weight vector for $\widetilde W$, hence $\widetilde W$ is indecomposable. Since ${\bf 1}_{M \otimes F_m} = e^0 \otimes {\bf 1}_{F_m} \in \widetilde W_{top}$, we conclude that
 $V_1(\g) \cong V_{1}(\g). {\bf 1}_{M \otimes F_m}$ is a proper submodule of  $\widetilde W$. Therefore, $\widetilde W$ is reducible and indecomposable $V_k(\g)$--module.
 \end{proof}
  
\begin{rem} Note that the building block for a construction of indecomposable $V_k(\g)$--module in $\Kl$ is the   indecomposable $M$--module $\Pi(0)$.  By using  the singular vectors $(a^+ _1)^{-m} \otimes\vert m\rangle$, 
we get indecomposable, weight   $V_1(sl(m \vert n) )$--modules for every $n \in {\Z}_{>0}$. But  one can  show that  these modules are in the category $\Kl$ if and only if $n=1$.  A more detailed analysis of these modules will appear elsewhere. \end{rem}

   Let now $k\in {\Z}_{>0}$ is arbitrary. In \cite[Corollary 5.4.3]{GS}, the authors proved that  $V_{k} (\g) = V^k(\g) / I$, where $I$ is the ideal in $V^{k} (\g)$ generated by the singular vector $e_{\theta}(-1) ^{k+1}{\bf 1}$.   Now we can combine this result with    Theorem \ref{free-field-indecom} and show that there exist indecomposable $V_{k}(\g)$--modules.
     
  \begin{coro} \label{kl-k-integer}
  The category $\sKlf$ is not semisimple for any $k \in {\Z}_{>0}$.
  \end{coro}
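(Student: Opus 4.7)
The strategy is to lift the level-$1$ construction of $\widetilde W$ from Theorem \ref{free-field-indecom} to arbitrary $k\in\Z_{>0}$ via the diagonal coproduct $\Delta: V^k(\g) \to V^1(\g) \otimes V^{k-1}(\g)$ induced by $x \mapsto x\otimes\mathbf{1} + \mathbf{1}\otimes x$ for $x\in\g$. The essential step is to show that $\Delta$ descends to an (automatically injective, by simplicity of $V_k(\g)$) vertex algebra homomorphism $\iota: V_k(\g) \to V_1(\g) \otimes V_{k-1}(\g)$.

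By \cite[Corollary 5.4.3]{GS}, the maximal ideal of $V^k(\g)$ is generated by $e_\theta(-1)^{k+1}\mathbf{1}$, so it suffices to check this vector lies in $\ker\Delta$. Since $e_\theta$ is even, the binomial identity gives
\[
\Delta\bigl(e_\theta(-1)^{k+1}\mathbf{1}\bigr) = \sum_{j=0}^{k+1}\binom{k+1}{j}\, e_\theta(-1)^{j}\mathbf{1}\otimes e_\theta(-1)^{k+1-j}\mathbf{1}.
\]
In $V_1(\g)$ one has $e_\theta(-1)^{2}\mathbf{1}=0$, killing all terms with $j\geq 2$; in $V_{k-1}(\g)$ one has $e_\theta(-1)^{k}\mathbf{1}=0$, killing the remaining terms $j=0,1$. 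Hence the sum vanishes and $\iota$ is well-defined.

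Using $\iota$, I would give $\widetilde W \otimes V_{k-1}(\g)$ the structure of a $V_k(\g)$-module and set $N := V_k(\g).(\widetilde w \otimes \mathbf{1})$. Since $\widetilde w$ is singular for $V_1(\g)$ and $\mathbf{1}$ for $V_{k-1}(\g)$, the element $\widetilde w\otimes\mathbf{1}$ is singular for the diagonal $V_k(\g)$-action; hence $N$ is a highest weight (therefore indecomposable) $V_k(\g)$-module, and $N\in\sKlf$ since local finiteness of $\g$ and semisimplicity of $\ha$ pass to the tensor product. By the proof of Theorem \ref{free-field-indecom}, a suitable product of zero modes $E_{1,j+1}(0)$ sends $\widetilde w$ to a nonzero scalar multiple of $\mathbf{1}_{V_1(\g)}$; since these zero modes annihilate $\mathbf{1}_{V_{k-1}(\g)}$, the same product applied diagonally to $\widetilde w\otimes\mathbf{1}$ produces a nonzero multiple of $\mathbf{1}_{V_1(\g)}\otimes\mathbf{1}_{V_{k-1}(\g)}$ inside $N$. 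By simplicity of $V_k(\g)$, the cyclic submodule $V_k(\g).(\mathbf{1}\otimes\mathbf{1}) \cong V_k(\g)$ has a one-dimensional conformal-weight-$0$ subspace, whereas $N$ contains $\widetilde w\otimes\mathbf{1}$ and $\mathbf{1}_{V_1(\g)}\otimes\mathbf{1}_{V_{k-1}(\g)}$ as linearly independent vectors at conformal weight $0$ (both $\widetilde w$ and $\mathbf{1}_{V_1(\g)}$ lie in $\widetilde W_{top}$, which is at conformal weight $0$ as established in the proof of Theorem \ref{free-field-indecom}). Thus $V_k(\g).(\mathbf{1}\otimes\mathbf{1})\subsetneq N$, so $N$ is reducible and indecomposable, proving that $\sKlf$ is not semisimple.

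The main obstacle is the descent of $\Delta$ to $V_k(\g)$, which hinges crucially on the particularly simple presentation of the maximal ideal of $V^k(\g)$ from \cite{GS}; once that is in hand the rest is a mild adaptation of the $k=1$ argument.
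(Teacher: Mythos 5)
Your proposal is correct and follows essentially the same route as the paper: both realize $V_k(\g)$ diagonally inside a tensor product of positive-integer-level simple quotients via the Gorelik--Serganova presentation of the maximal ideal, tensor the level-one module $\widetilde W$ with vacuum modules, and take the cyclic submodule generated by $\widetilde w \otimes \mathbf{1} \otimes \cdots \otimes \mathbf{1}$. The only cosmetic difference is that the paper embeds $V_k(\g)$ into $V_1(\g)^{\otimes k}$ rather than $V_1(\g)\otimes V_{k-1}(\g)$, so your explicit binomial verification that $e_\theta(-1)^{k+1}\mathbf{1}$ maps to zero plays the role of the multinomial/pigeonhole argument left implicit in the paper.
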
 
\begin{proof}
 It is clear that there is a diagonal action of 
  $V^{k}(\g)$ on $V_1(\g) ^{\otimes k}$. Using \cite{GS},  one gets that
  $$ V_{k}(\g) \cong V^{k}(\g). (  \underbrace{ {\bf 1} \otimes \cdots \otimes {\bf 1}}_{k \  \mbox{times}}) \subset   V_1(\g) ^{\otimes k}. $$
  As a consequence, we have that $\widetilde W \otimes  V_1(\g) ^{\otimes (k-1)}$ is a $V_{k}(\g)$--module. Define
  \bea \widetilde w^{(k)} &=& \widetilde w  \otimes \underbrace{{\bf 1} \otimes \cdots \otimes  {\bf 1} }_{(k-1)  \  \mbox{times}}, \nonumber \\
 \widetilde W^{(k)} &=&  V_{k}(\g). \widetilde w^{(k)}  \subset   \widetilde W \otimes  V_1(\g) ^{\otimes (k-1)}.\nonumber \eea
  One easily sees that:
   $$ \widetilde W^{(k)} _{top} = \mbox{span}_{\C} \{ (e^{-s c} \otimes :\Psi^+ _{j_1} \dots  \Psi^+ _{j_s}:) \otimes \underbrace{{\bf 1} \otimes \cdots \otimes  {\bf 1} }_{(k-1)  \  \mbox{times}} \}, $$
 for $0 \le s \le m$, $ 1 \le j_1 < \cdots < j_s \le m$. We can then argue as in Theorem \ref{free-field-indecom} and conclude that $ \widetilde W^{(k)}$ is indecomposable with  $V_k(\g)\cdot( \underbrace{{\bf 1} \otimes \cdots \otimes  {\bf 1} }_{k  \  \mbox{times}})$ a proper  submodule.
\end{proof}

 \begin{rem} \label{analysis-adm}
Assume that for certain non-integral level $k \in {\C}$, we have \bea
V_{k}(\g) \hookrightarrow V_{k-1}(\g) \otimes V_1(\g). \label{cond-embed-new} \eea
 Then the same proof as above implies that  $\sKlf$ is not semisimple.
 Beyond integral  levels $k \ge 2$, we believe that (\ref{cond-embed-new}) holds for principal admissible levels (cf. \cite{KW4}, \cite{KW5}). As far as we know, this is not proved yet.  So, one expects that $V_k(\g)$ will be semisimple only for principal admissible levels $k$ such that $k-1$ is not admissible. In the case $\g= \mathfrak{sl}(2 \vert 1)$, such levels will be  described in Section \ref{9}.
\end{rem}

 \section{Semisimplicity of $V_{1/2}(psl(2\vert 2))$}
 \label{N4}
 
In this section we will present an example where $\sKlf$ is semisimple with $k$  non-collapsing and $W_k(\g, \theta)$ irrational.

 In this section   we let $k=1/2$, $k_0 =-3/2$.
Set  $V=W_{k} (psl(2\vert 2), \theta)$.  By \cite[\S 8.4]{KW2}, $V$ is isomorphic to the simple $N=4$ superconformal vertex algebra at central charge $c=-9$. A free-field realization of the vertex algebra $V$ was presented in \cite{A-TG}, where it was proved that there is a conformal embedding $V_{k_0} (sl(2)) \hookrightarrow V$. Next, we  consider the category of $V$--modules which belong to $KL _{k_0} (sl(2))$, which   we  denote   by $KL_{N=4} $.  Since $KL _{k_0} (sl(2))$ is semi-simple, $KL_{N=4} $ coincides with the category of ordinary $V$--modules.

Using the same methods as in Section \ref{sect-4} and the fact that for $\g = sl(2)$  we have $KL _{k_0} (sl(2)) = KL  ^{fin} _{k_0} (sl(2))  $ we get:
 \begin{lem} \label{self-n4} In $KL_{N=4 } $ we have:
 $$ Ext^{1} _{ KL_{N=4}} (V, V) = \{0 \}. $$
 \end{lem}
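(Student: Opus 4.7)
The plan is to show that any short exact sequence $0 \to V \xrightarrow{\iota} N \xrightarrow{\pi} V \to 0$ in $KL_{N=4}$ splits, mimicking the argument of Lemma \ref{sl} but at the vertex algebra level, with $V_{k_0}(sl(2))$ playing the role of the Cartan. Since $KL_{N=4}$ sits inside $KL_{k_0}(sl(2))$ via the conformal embedding $V_{k_0}(sl(2))\hookrightarrow V$, and the latter category is semisimple with $KL_{k_0}(sl(2)) = KL^{fin}_{k_0}(sl(2))$, the given sequence already splits as $V_{k_0}(sl(2))$-modules. This yields a vector $w\in N$ with $\pi(w)=\vac_V$ that lies in a $V_{k_0}(sl(2))$-submodule of $N$ isomorphic to $V_{k_0}(sl(2))$ with $w$ corresponding to its vacuum. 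In particular $x(n)w=0$ for $x\in sl(2)$, $n\ge 0$, and, by conformality of the embedding, $L(n)w=0$ for $n\ge -1$.

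Next I would carry out a conformal-weight analysis on $N$. Because $N$ is an ordinary $V$-module and an extension of $V$ by $V$, its $L(0)$-spectrum is contained in the $L(0)$-spectrum of $V$. The $N=4$ algebra $V$ has even generators of weight $1$ (the $sl(2)$-currents) and four odd generators $G^{i\pm}$ of weight $3/2$ (the Virasoro $L$ of weight $2$ being Sugawara), so its $L(0)$-spectrum lies in $\{0\}\cup\Z_{>0}\cup(3/2+\Z_{\ge 0})$. In particular $V_{1/2}=0$ and $V_n=0$ for $n<0$, and the same vanishings pass to $N$.

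The crucial step is to verify that $w$ is vacuum-like with respect to the supercurrents. The modes $G^{i\pm}(m)$, $m\in 1/2+\Z$, shift $L(0)$-weight by $-m$, so $G^{i\pm}(m)w\in N_{-m}$ vanishes whenever $m>0$ (negative weight) and when $m=-1/2$ (since $N_{1/2}=0$). Hence $G^{i\pm}(m)w=0$ for all $m\ge -1/2$, which are precisely the vacuum annihilation conditions satisfied by $\vac_V$ in $V$.

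Since $V$ is strongly generated as a vertex algebra by the $sl(2)$-currents and the supercurrents $G^{i\pm}$, and $w$ satisfies the full set of vacuum annihilation conditions for these generators, a standard application of the Borcherds identity produces a $V$-module homomorphism $\phi:V\to N$ with $\phi(\vac_V)=w$. The composite $\pi\circ\phi$ is then an endomorphism of $V$ as a module over itself fixing $\vac_V$; by simplicity of $V$ it equals $\mathrm{id}_V$, so $\phi$ is a splitting of the sequence. The main obstacle is the weight analysis forcing $N_{1/2}=0$: this specifically relies on the $N=4$ structure of $V$ (no odd fields below weight $3/2$), and without it the condition $G^{i\pm}(-1/2)w=0$ would require separate argument.
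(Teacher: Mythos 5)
Your proof is correct. The paper itself disposes of this lemma in one line, saying it follows ``by the same methods as in Section 4'' together with $KL_{k_0}(sl(2))=KL^{fin}_{k_0}(sl(2))$; unpacked, that means the argument of Lemma \ref{sl}: semisimplicity of the Cartan (and $L(0)$) action splits the extension on the lowest weight space, and one then propagates the splitting by mapping the Verma-type highest weight module of the $N=4$ algebra into $N$ and checking its kernel dies. Your route shares the first half of that skeleton — you use semisimplicity of $KL_{k_0}(sl(2))$ to split the sequence as $V_{k_0}(sl(2))$-modules and extract a lift $w$ of the vacuum killed by the nonnegative current modes — but you replace the Verma-module step by Li's vacuum-like vector lemma. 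The price is that you must check by hand that $w$ satisfies the \emph{full} vacuum annihilation conditions for the odd generators, in particular $G^{i\pm}(-1/2)w=0$, which you correctly extract from the weight gap $V_{1/2}=0$ of the minimal $W$-algebra (odd generators start in weight $3/2$); in the paper's Verma argument this point is absorbed into ``a submodule of $V$ with trivial top weight space is zero.'' Two small points worth making explicit if you write this up: (i) the set of $a\in V$ with $a_nw=0$ for all $n\ge 0$ is a vertex subalgebra (closed under all products by the iterate formula), so checking the generators suffices and strong generation is not needed; (ii) the splitting at the end is immediate from $\pi(\phi(a))=a_{-1}\pi(w)=a_{-1}\vac=a$, so simplicity of $V$ is not actually required there. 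Both approaches ultimately rest on the same two inputs — semisimplicity of $KL_{k_0}(sl(2))$ and the highest-weight structure of $V$ — so this is a legitimate, slightly more self-contained variant of the intended proof.
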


  \begin{thm} \ \label{psl22-n4}
  \begin{enumerate} 
  \item The category $KL_{N=4}$ is semisimple.  In particular, every highest weight $V$--module in $KL_{N=4 } $ is irreducible and isomorphic to $V$.
    \item The category $\sKlf$ for $V_{1/2} (psl(2 \vert 2))$ is semisimple.
    \end{enumerate}
  \end{thm}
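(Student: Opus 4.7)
\medskip

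\noindent\textbf{Proof plan.}

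For part (1), the plan is to show that $V$ is the unique simple object of $KL_{N=4}$ and that every object decomposes as a direct sum of copies of $V$. First I would use that $k_0=-3/2$ is an admissible level for $\mathfrak{sl}(2)$, so by Arakawa's theorem the category $KL_{k_0}(\mathfrak{sl}(2))$ is semisimple; moreover its unique simple object is $V_{k_0}(\mathfrak{sl}(2))$ itself, because the only admissible $\widehat{\mathfrak{sl}(2)}$-module at level $k_0$ that is locally finite as an $\mathfrak{sl}(2)$-module is the vacuum module. Since the embedding $V_{k_0}(\mathfrak{sl}(2))\hookrightarrow V$ is conformal, any $M\in KL_{N=4}$ restricts to a module in $KL_{k_0}(\mathfrak{sl}(2))$ and hence decomposes as $M\cong V_{k_0}(\mathfrak{sl}(2))\otimes\Omega(M)$ for some graded multiplicity space $\Omega(M)$.

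Next, let $M$ be a highest weight $V$-module in $KL_{N=4}$, generated by a singular vector $v$ of minimal conformal weight. Under the decomposition above, $v$ must lie in the top of a copy of $V_{k_0}(\mathfrak{sl}(2))$, hence is $\mathfrak{sl}(2)$-invariant and has the same lowest conformal weight as the vacuum of $V_{k_0}(\mathfrak{sl}(2))$. By the universal property of $V$ there is a surjective $V$-module map $V\twoheadrightarrow M$ sending $\mathbf 1\mapsto v$; simplicity of $V$ forces $M\cong V$. Then Lemma~\ref{self-n4} gives $\mathrm{Ext}^1_{KL_{N=4}}(V,V)=\{0\}$, and since $V$ is the unique simple object, an argument parallel to the one in Theorem~\ref{TT} (treating the infinitely generated case as a limit of finitely generated submodules) yields that every object of $KL_{N=4}$ is a direct sum of copies of $V$.

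For part (2), the plan is to apply Theorem~\ref{TT} to $\g=psl(2|2)$ at $k=1/2\notin\mathbb Z_{\ge 0}$. By Lemma~\ref{kriterij} it suffices to show that $H(U)$ is irreducible and nonzero for every nonzero highest weight module $U$ in $\sKlf$. Nonvanishing follows from \cite{Araduke} since $k\notin\mathbb Z_{\ge 0}$, and by \cite{KW2} $H(U)$ is a highest weight module for $V=W_k(\g,\theta)$. I would then verify, along the lines of Proposition~\ref{nuovofondamentale}, that the $V_{k_0}(\mathfrak{sl}(2))$ generated inside $V$ acts on $H(U)$ giving a module in $KL_{k_0}(\mathfrak{sl}(2))$; this places $H(U)$ in $KL_{N=4}$. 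By part (1) we then get $H(U)\cong V$, hence irreducible, and Theorem~\ref{TT} concludes that $\sKlf$ is semisimple.

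The main obstacle is the isomorphism $M\cong V$ in part (1): it is crucial that the singular vector $v$ generating a highest weight $M\in KL_{N=4}$ actually comes from a \emph{vacuum-like} vector inside the copy of $V_{k_0}(\mathfrak{sl}(2))$ it generates, and not from a higher conformal weight or non-trivial $\mathfrak{sl}(2)$-isotypic component of $\Omega(M)$. This rests on the identification $L^V(0)=L^{V_{k_0}(\mathfrak{sl}(2))}(0)$ provided by conformality, together with the fact that the minimal conformal weight of $V_{k_0}(\mathfrak{sl}(2))$ is attained only on the trivial $\mathfrak{sl}(2)$-isotype. A secondary, more technical point is the verification that the reduction functor $H$ maps $\sKlf$ for $V_k(\g)$ into $KL_{N=4}$; this is the exact super/$N=4$ analogue of Proposition~\ref{nuovofondamentale} and should follow from the same kind of bigrading argument on $\mathcal C^M$ used there.
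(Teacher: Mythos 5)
Your part (2) follows the paper's route exactly: apply quantum reduction to a nonzero highest weight module, note (via the analogue of Proposition \ref{nuovofondamentale}) that the image lands in $KL_{N=4}$, invoke part (1) to get irreducibility, and conclude with Lemma \ref{kriterij} and Theorem \ref{TT}. So everything hinges on part (1), and there your key step fails. First, $k_0=-3/2$ is \emph{not} an admissible level for $\widehat{sl}(2)$: admissibility requires $k+2=p/q$ with $p\ge 2$, whereas here $k+2=1/2$. Arakawa's theorem therefore does not apply; semisimplicity of $KL_{-3/2}(sl(2))$ is nevertheless true, but it comes from \cite[Theorem 4.4.1]{CY}, as the paper itself records in the proof of Corollary \ref{list}. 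Much more seriously, the vacuum module is \emph{not} the unique simple object of $KL_{-3/2}(sl(2))$. The module $L_{-3/2}(\omega_1)$, with two-dimensional top and lowest conformal weight $\tfrac{n(n+2)}{2}\big|_{n=1}=\tfrac32$, is also a simple object of this category: it occurs inside $V$ itself, because the four odd generators of the $N=4$ algebra have conformal weight $\tfrac32$ and form $sl(2)$-doublets, so the odd part of $V$ contains copies of $L_{-3/2}(\omega_1)$. Hence the decomposition $M\cong V_{k_0}(sl(2))\otimes\Omega(M)$ is false, the generating singular vector of a highest weight module in $KL_{N=4}$ need not be vacuum-like (a priori it could sit on top of a copy of $L_{-3/2}(\omega_1)$ or of another simple constituent, with nonzero conformal weight and nontrivial $sl(2)$-weight), and the conclusion $M\cong V$ does not follow from your argument.

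What is missing is an input that excludes these other potential highest weights, and this is precisely what the paper supplies: the relation $[e]([\omega]+1/2)=0$ in the Zhu algebra $A(V)$ from \cite[Proposition 3]{A-TG}. On the top component of a highest weight module in $KL_{N=4}$ the conformal weight $-1/2$ cannot occur (the possible lowest weights are the nonnegative numbers $\tfrac{n(n+2)}{2}$), so this relation forces $e(0)$ to annihilate the top; the top is then $sl(2)$-trivial, hence lies in a vacuum-type summand at conformal weight $0$, and only at that point does your vacuum-like-vector argument (which is correct as far as it goes, via Li's lemma rather than any ``universal property'' of the simple algebra $V$) yield $M\cong V$. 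Your secondary concern, that $H$ sends $\sKlf$ for $V_{1/2}(psl(2|2))$ into $KL_{N=4}$, is indeed settled exactly as you suggest by Proposition \ref{nuovofondamentale}.
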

\begin{proof}
  By \cite{A-TG}, $V$ is the unique irreducible $V$--module in $KL_{N=4 }$, Assume that $M$ is a highest weight module in $KL_{N=4}$.
  By using the relation $$ [e]([\omega] + 1/2) = 0 $$ in the Zhu's algebra $A(V)$ from \cite[Proposition 3]{A-TG}  (with notation used there), one  easily sees that $M$  is irreducible  and isomorphic  $V$. Now applying    Lemma \ref{self-n4}  we get that   $KL_{N=4}$ is semisimple.

  Assume now that $W$ is a non-zero  highest weight $V_{1/2} (psl(2 \vert 2))$--module in $\Kl$. Then
   $H(W)$ is a non-zero  highest weight $V$--module in $KL_{N=4}$ and therefore $H(W) \cong V$.
 Now, using Lemma \ref{kriterij}, we get that $\sKlf(psl(2 \vert 2))$ is semisimple.
  \end{proof}

 \section{ The category $\Kl$ for $sl(2 \vert 1)$  }\label{9}

 In this section let $\g= sl(2 \vert 1)$ and $k = -\frac{m+1}{m+2}$, $m  \in {\Z}_{\ge 0}$.
 Recall that $W_k(\g, \theta)$ is isomorphic to the $N=2$ superconformal vertex algebra, which is rational by \cite{A-2001}. Therefore $\sKlf$ is semisimple by 
 Theorem \ref{T4}. Let us see that in this case $\Kl=\sKlf$.
 
 \begin{thm} \label{semisl21} Let $\g= sl(2 \vert 1)$ and $k = -\frac{m+1}{m+2}$, $m  \in {\Z}_{\ge 0}$. Then $ KL_k$ is semisimple. 
 \end{thm}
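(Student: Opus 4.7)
The plan is to apply Theorem \ref{ext-kl-large}. Since $W_k(\g,\theta)$ is isomorphic to the simple $N=2$ superconformal vertex algebra at $c=3m/(m+2)$, which is rational by \cite{A-2001}, Theorem \ref{T4} immediately yields that $\sKlf$ is semisimple. It remains to verify the self-extension condition \eqref{Ext-1-2} for every irreducible $V_k(\g)$-module $M$ in $\Kl$.

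To that end, I would exploit the structural input highlighted in the introduction: the Heisenberg vertex subalgebra generated by the one-dimensional center of $\g_{\overline 0}$ sits inside the regular vertex operator algebra $D_{m+1,2}$ of \cite{ACEJM}. Combining this with the natural decomposition of $V_k(\g)$ along $\g_{\overline 0} = sl(2)\oplus\CC$, one obtains a conformal embedding of the form $V_{k_1}(sl(2)) \otimes D_{m+1,2} \hookrightarrow V_k(\g)$ in which $V_{k_1}(sl(2))$ is admissible (so $KL_{k_1}(sl(2))$ is semisimple by \cite{Ar4}). Regularity of $D_{m+1,2}$ together with semisimplicity of $KL_{k_1}(sl(2))$ then forces every $W \in \Kl$ to decompose semisimply under $V_{k_1}(sl(2)) \otimes D_{m+1,2}$, which restricts the irreducible objects of $\Kl$ to a short list whose top components are finite-dimensional \emph{atypical} $sl(2\vert 1)$-modules.

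With this classification in hand, the Ext vanishing follows from Proposition \ref{G}: since ${\rm def}\,sl(2\vert 1)=1$ and $2\rho_1\neq 0$, Germoni's theorem yields $\mbox{Ext}^1(L(\l),L(\l))=\{0\}$ in the category of finite-dimensional $\g$-modules for every atypical $\l$ (the only possibly nonzero self-extension would require $\l=\l\pm 2\rho_1$, which is impossible), and for typical irreducibles self-extensions vanish because typical blocks are semisimple. Hence condition \eqref{Ext-1-2} holds for every irreducible $M$ in $\Kl$, and Theorem \ref{ext-kl-large} yields $\Kl=\sKlf$, completing the proof. The main obstacle is the first step of the preceding paragraph: producing the conformal embedding involving $D_{m+1,2}$ and using its regularity together with fusion-theoretic arguments to pin down the irreducible objects of $\Kl$ along with the atypicality of their tops; once that structural step is established, the Ext calculation reduces to a direct invocation of Germoni's theorem and Theorem \ref{ext-kl-large} takes care of the passage from $\sKlf$ to $\Kl$.
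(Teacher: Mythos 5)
Your opening and closing moves are the right ones: $\sKlf$ is indeed semisimple by Theorem \ref{T4} because the $N=2$ minimal model is rational, and Theorem \ref{ext-kl-large} is a legitimate bridge from $\sKlf$ to $\Kl$. The trouble lies in the middle, where the two load-bearing claims are not justified and are in fact not what the paper proves. First, you assert a conformal embedding $V_{k_1}(sl(2))\otimes D_{m+1,2}\hookrightarrow V_k(\g)$ with $V_{k_1}(sl(2))$ simple admissible, so that Arakawa's theorem applies. The paper only constructs an embedding $\widetilde V_{k}(sl(2))\otimes W\hookrightarrow V_k(\g)$ with $W\cong D_{m+1,2}$ (Theorem \ref{com-dm1}), where $\widetilde V_{k}(sl(2))$ is \emph{some} quotient of $V^{k}(sl(2))$, and it explicitly remarks that simplicity of this subalgebra is believed but not proved --- and not needed. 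Second, you invoke a classification of the irreducible objects of $\Kl$ together with atypicality of their top components; no such classification is carried out for these levels (the analogous statement, Proposition \ref{prop-2}, is proved only for $sl(m|1)$ at $k=-1$, by a quite different fusion-rules computation). You yourself flag this as ``the main obstacle,'' but then treat it as established. As written, the proposal is a sketch whose hardest steps are missing.

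The paper's actual proof takes a different route that avoids both issues. After Theorem \ref{com-dm1} identifies $W\cong D_{m+1,2}$ (a regular vertex algebra containing the central element $H^+$ of $\g_{\bar 0}$), the proof of Theorem \ref{semisl21} never classifies irreducibles and never uses Germoni's theorem. Instead, given a non-split extension $M^{ext}$ of irreducibles in $\Kl$, it shows directly that $(M^{ext})_{top}$ carries a semisimple action of $\h$ and of $L(0)$: the $sl(2)$-part acts semisimply because the top is finite dimensional, $H^+(0)$ and $L^{W}(0)$ act semisimply by regularity of $D_{m+1,2}$, and $L^{sl(2)}(0)$ acts through the Casimir, whence $L(0)=L^{sl(2)}(0)+L^{W}(0)$ is diagonal; this forces $M^{ext}\in\sKlf$, so the extension splits. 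Note, incidentally, that your own Ext computation (Proposition \ref{G} for atypicality one, plus semisimplicity of typical blocks) does not actually require the classification of irreducibles, since every finite-dimensional irreducible $sl(2\vert 1)$-module has atypicality at most one; if you want to salvage your route, the honest formulation is that the entire burden rests on proving $\mbox{Ext}^{1}(L(\l),L(\l))=\{0\}$ for \emph{all} finite-dimensional irreducibles, including the typical case in the category of all (not necessarily $\h$-semisimple) finite-dimensional modules --- a point the paper sidesteps rather than proves.
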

      \begin{proof}  
  Note that  $V_k(\g)$ has a subalgebra isomorphic to the affine vertex algebra $\widetilde V_{k}(sl(2))$, which is a certain quotient of $V^{k}(sl(2))$. 
Theorem \ref{com-dm1} below  implies  that  there is a conformal embedding  $U =\widetilde V_{k}(sl(2))\otimes W \hookrightarrow V_k(\g)$, where $W$ is isomorphic to the  regular vertex algebra $D_{m+1,2}$ from \cite{ACEJM}, also investigated recently in  \cite{YY1, YY2}.   In the case $m=0$, the assertion is already proved in Theorem \ref{cn1} and the vertex algebra $D_{1,2} $ is the lattice vertex algebra $V_D$.  Since $\g_{\bar 0}\cong sl(2)\oplus \C H^+$, and the center  $\C H^+$  belongs to $W$ (see \eqref{zincenter}), we have that $V_k(\g_{\bar 0}) \subset U$.  Note that the  $sl(2)$ subalgebra of $\g_{\bar 0}$ acts semisimply  on any module $M$ from $KL_k ^{ss}$. Since   $H^+\in  W$ and $W$ is regular,  we get that $M$ is  completely reducible as a module for the Heisenberg vertex algebra generated by $H^+$, which we   denote by $M_{H^+}(1)$.  Therefore, $M$ is a sum of $\widetilde V_k(sl(2)) \otimes M_{H^+}(1)$--modules in 
  $KL_k ^{ss} $ with semisimple action of $sl(2)$.   This implies that  the Cartan subalgebra of $\g$  acts semisimply on any module in $Kl_k ^{ss}$.  
 Hence $ KL_k ^{ss} = \sKlf. $

Assume $M$ and $N$ are irreducible $V_k(\g)$--module in $\Kl$ and that we have an   extension 
\bea  0 \rightarrow M \rightarrow M^{ext} \rightarrow N \rightarrow 0 \label{ext2} \eea
for a certain ${\Bbb  Z}_{\ge 0}$--gradable module $M^{ext}$ in $\Kl$. 
We have the following cases:
\begin{itemize}
\item If $M^{ext}$ is in $ KL_k ^{ss} =\sKlf$, then $M^{ext} \cong  M \oplus N$ because  $\sKlf$ is semisimple.
\item If $M^{ext}$ is  logarithmic,  then $Q = L(0)- L_{ss}(0)$ is a $V_k(\g)$--homomorphism and therefore $N \cong Q M \cong M$. So we can assume that $M=N$.
Then applying Zhu's functor we get an  extension 
$$ 0 \rightarrow M_{top} \rightarrow (M^{ext})_{top} \rightarrow M_{top} \rightarrow 0.$$
We proved above that  $H^+(0)$ is semisimple.
Note that $(M^{ext})_{top}$ is finite-dimensional and therefore the action of $sl(2)$ on  $(M^{ext})_{top}$  is semi-simple.  Thus the Cartan subalgebra $\h$ of $\g$ acts diagonally  on $(M^{ext})_{top}$.
Moreover, the action of $L(0)$  on  $(M^{ext})_{top}$ is given by
$$ L(0) = L^{sl(2)}  (0) + L^{W} (0)$$
where $L^{sl(2)}$ is Sugawara Virasoro vector in $\widetilde  V_k(sl(2))$, and $L^{W}$ is Virasoro vector in $W$. Since $W$ is regular, we have that the action of $L^{W}(0)$ is diagonal. Since the action of $L^{sl(2)}  (0) $ on  $(M^{ext})_{top}$  is proportional to the action of Casimir element of $sl(2)$,  we conclude that $L^{sl(2)}  (0) $ also acts diagonally on $(M^{ext})_{top}$. Therefore $L(0)$ acts diagonally on $(M^{ext})_{top}$. This implies that $M^{ext}$ is a module from $\sKlf$, and therefore (\ref{ext2}) splits.
\end{itemize}

  \end{proof}

  \begin{rem} Using the language of tensor categories and concepts from \cite{CY}, Theorem \ref{semisl21} implies    that $\Kl$ is a  semisimple braided tensor category.
  \end{rem}

 \begin{rem} 
 Although $KL_k(sl(2))$ is semi-simple for $k$ admissible, since $V^k(sl(2))$ is not simple, the category  $KL^k(sl(2))$ is not semi-simple.  Moreover,  it is expected that $KL^k(sl(2))$ contains logarithmic modules. The paper \cite{Ras} presents some conjectural logarithmic modules at admissible level, which  to belong  $KL^k(sl(2))$. \footnote{We thank the referee for this information.}
 
 We believe  that the subalgebra  $\widetilde V_k (sl(2))$  of $V_k(\g)$ is simple for $k=-\frac{m+1}{m+2}$. But we don't need this information for proving semisimplicity of $\Kl$.
 \end{rem}
  
   Based on Theorem \ref{semisl21} and  the arguments presented  in Remark \ref{analysis-adm}  we expect that the following conjecture holds.
   
   \begin{conj}\label{conj-s-s-21}
   The category $\Kl$ is semisimple if and only if $k \in 
   \{ -1, -\frac{m+1}{m+2} \ \vert  \ m \in {\Z}_{\ge 0}\}. $
   \end{conj}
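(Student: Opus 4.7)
The strategy is to treat the two directions of the biconditional separately. The ``if'' direction is already in hand: for $k=-1$, Theorem~\ref{complete-red-slm1} applied with $m=2$ gives complete reducibility of $\Kl$ for $\g=sl(2|1)$; for $k=-\tfrac{m+1}{m+2}$ with $m\in \Z_{\ge 0}$, this is exactly Theorem~\ref{semisl21}. The substance of the conjecture is therefore the ``only if'' direction, and the task reduces to exhibiting, at every remaining $k$, an indecomposable non-irreducible object of $\Kl$.

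A useful first reduction is that semisimplicity of $\Kl$ implies semisimplicity of $\sKlf$: an irreducible $V_k(\g)$-module in $\Kl$ is a highest weight module (its top component is a finite-dimensional irreducible $\g$-module, on which $\h$ therefore acts semisimply), so it already lies in $\sKlf$, and a $\Kl$-decomposition of an $\sKlf$-object stays inside $\sKlf$. For $k\in \Z_{>0}$ the required non-semisimplicity of $\sKlf$ is then provided by Corollary~\ref{kl-k-integer}, which builds the indecomposable module $\widetilde W^{(k)}$ out of the localisation $\Pi(0)$ of the Weyl vertex algebra; this disposes of all positive integer levels at once.

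The remaining levels $k\notin \Z_{>0}\cup\{-1\}\cup\{-\tfrac{m+1}{m+2}:m\in \Z_{\ge 0}\}$ split into principal admissible, other rational, and irrational. For principal admissible $k$ the plan is the one sketched in Remark~\ref{analysis-adm}: first establish a vertex algebra embedding $V_k(\g)\hookrightarrow V_{k-1}(\g)\otimes V_1(\g)$, and then port the $\widetilde W\otimes \vac^{\otimes(k-1)}$ construction of Theorem~\ref{free-field-indecom} to produce an indecomposable object of $\sKlf$. For generic rational and irrational $k$ I would work through the quantum Hamiltonian reduction $H$: since $W_k(\g,\theta)$ is the $N=2$ superconformal vertex algebra, which is non-rational whenever the central charge $c=-3(2k+1)$ is not of the form $3m/(m+2)$, a logarithmic or indecomposable $N=2$-module exists at such levels and should lift under inverse reduction to an indecomposable object of $\sKlf$.

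The principal obstacle, and the reason the statement is only conjectural, is the admissible case: verifying the embedding $V_k(\g)\hookrightarrow V_{k-1}(\g)\otimes V_1(\g)$ at principal admissible levels amounts to showing that the diagonal comultiplication on $V^k(\g)$ descends to the simple quotient, and this requires a fine description of the maximal ideal of $V^k(sl(2|1))$ which is not currently available. A parallel difficulty in the generic case is to ensure that the module produced by inverse reduction actually lands in $\Kl$ and is not merely a weak module; a clean compatibility between $\Kl$ and a suitable category of $N=2$-modules under $H$ is needed. As an intermediate goal, classifying the irreducibles in $\Kl$ at these levels (by analogy with the treatment of $k=-1$ via the conformal embedding $V_{-1}(sl(m))\otimes M_c(1)\hookrightarrow V_{-1}(sl(m|1))$) would reduce the problem to an $\mathrm{Ext}^1$ computation in the finite-dimensional $\g$-module category, where non-trivial extensions among atypical tops are expected to yield the desired indecomposables.
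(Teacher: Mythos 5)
This statement is labelled as a conjecture in the paper, and the paper offers no proof of it: the only supporting material is Theorem~\ref{semisl21}, Theorem~\ref{complete-red-slm1}, Corollary~\ref{kl-k-integer} and the heuristics of Remark~\ref{analysis-adm} — exactly the ingredients you assemble. So there is no ``paper proof'' for your argument to diverge from, and your write-up is an accurate account of what is actually known. The parts you do establish are correct: the ``if'' direction follows from Theorems~\ref{complete-red-slm1} and~\ref{semisl21} as you say, and your reduction of the ``only if'' direction at positive integer levels is sound — an irreducible object of $\Kl$ has a finite-dimensional irreducible top, hence semisimple $\ha$-action, hence lies in $\sKlf$, so semisimplicity of $\Kl$ would force semisimplicity of $\sKlf$, contradicting Corollary~\ref{kl-k-integer}. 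This is a small but genuine tightening of what the paper states explicitly (Theorem~\ref{free-field-indecom} and Corollary~\ref{kl-k-integer} are phrased for $\sKlf$ only).

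For the remaining levels you correctly identify the same obstructions the authors do: the embedding $V_k(\g)\hookrightarrow V_{k-1}(\g)\otimes V_1(\g)$ of Remark~\ref{analysis-adm} is unproven at admissible levels, and ``inverse reduction'' from non-rational $N=2$ modules back into $\Kl$ is not available. One remark on your treatment of irrational $k$: there the $W$-algebra detour is probably unnecessary. At generic level one expects $V^k(\g)=V_k(\g)$, and then the Weyl module induced from any indecomposable non-irreducible finite-dimensional $sl(2|1)$-module (e.g.\ an atypical Kac module) is already a non-semisimple object of $\Kl$ whose top is that module; this would settle the irrational case directly, leaving the admissible and remaining rational levels as the genuinely open part of the conjecture. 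In any case, since neither you nor the paper closes those cases, the statement remains a conjecture, and your proposal should be read as a correct summary of the evidence rather than a proof.
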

       
\subsection{The vertex algebra $D_{m+1,2}$.   }

The vertex algebras $D_{m+1,k}$ are defined in \cite{ACEJM} for arbitrary $m \in {\C}$ and $ k\in {\Z}_{>0}$. We shall here assume that $m \in {\Z}_{\ge 0}$ and $k=2$.
 
For $p \in {\Z}$, let $F_{p} = M_{\delta} (1) \otimes {\C}[\Z \delta]$ be the rank one lattice vertex algebra associated to the lattice ${\Z}\delta^{(p)}$,
$\langle \delta^{(p)} , \delta^{(p)}  \rangle = p$. Here $M_{\delta^{(p)}}(1)$ is the Heisenberg vertex algebra generated by $\delta^{(p)} (z) = \sum_{n \in {\Z}} \delta^{(p)} (z) z^{-n-1}$.
Following \cite{ACEJM}, let 
$D_{m+1,2}$ be  the  vertex subalgebra of $V_{m+1}(sl(2)) \otimes F_2$ generated by
$$ \overline X =e(-1){\bf 1} \otimes e^{\delta^{(2)} }, \ \  \overline Y =f(-1){\bf 1} \otimes e^{-\delta^{(2)} }.$$

It was proved in \cite{ACEJM} that $D_{m+1,2}$ is a regular vertex operator algebra (i.e., every weak $D_{m+1,2}$--module is  completely reducible) and that
$$ D_{m+1,2} \otimes F_{-2} = V_{m+1} (sl(2)) \otimes F_{-2(m+2)}. $$

Consider now the vertex algebra $V_k(\g)$ for $k=-\frac{m+1}{m+2}$ and $\g= sl(2 \vert 1)$.   It is generated by four even fields $E^{1,2}, H^-, F^{1,2}, H^+$, such that 
$E^{1,2}, 2 H^-, F^{1,2}$ define the homomorphism $\Phi_1: V^k(sl(2)) \rightarrow  V_k(\g)$ commuting with the Heisenberg subalgebra generated by $H^+$, and four odd fields
$E^1, E^2, F^1, F^2$ (cf. \cite{BFST}). 
Denote by
$$ X= E^1 (-1) F^2 (-1) {\bf 1}, \quad Y = F^1 (-1) E^2(-1) {\bf 1}. $$
Set $X(n) = X_{n+1}, \ Y(n) = Y_{n+1}$. 
Let $W$ be the vertex subalgebra of $V_k (\g)$ generated by  vectors $X$ and $Y$. A direct computation, which uses the relations displayed in 
\cite[Appendix D]{BFST}, shows that 
\begin{equation}\label{zincenter}
X(1)Y=k\vac+2H^+.
\end{equation}

\begin{lem} \label{lem-rel-1}In $V_k (\g)$, we have:
\bea  X(- 2 m-4) X(-2 m-2) \cdots X(-2) {\bf 1} &=&  0, \label{rel-01} \\
X(- 2 m-5) X(-2 m-2) \cdots X(-2) {\bf 1} &=&  0. \label{rel-02} 
\eea
\end{lem}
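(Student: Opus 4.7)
The plan is to verify both identities by embedding the subalgebra $W \subset V_k(\g)$ generated by $X$ and $Y$ into $V_{m+1}(sl(2)) \otimes F_2$ via a free-field realisation, under which $X$ is identified with $e(-1)\vac \otimes e^{\delta^{(2)}}$. Here $e$ denotes the standard upper-triangular root vector in $sl(2)$, so $[e,e]=0$ and $Y(e, z)\, e = :e(z)\,e:$ has no singular part; and $\delta^{(2)}$ generates a rank-one lattice of squared norm $2$, so $Y(e^{\delta^{(2)}}, z)\, e^{\delta^{(2)}} = z^{2} E^{-}(-\delta^{(2)}, z)\, e^{2\delta^{(2)}}$. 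Although the identification $W \cong D_{m+1,2}$ is the content of Theorem \ref{com-dm1} below, the free-field data for $X = E^1(-1) F^2(-1)\vac$ can be read off from the OPE relations of \cite[Appendix D]{BFST}. The mode conventions are reconciled via $X(n) = X_{(n+1)}$, so the paper's modes $-2, -4, \ldots$ in the lemma correspond to the standard modes $X_{(-1)}, X_{(-3)}, \ldots\,$, where $X_{(n)}$ denotes the coefficient of $z^{-n-1}$ in $Y(X, z)$.

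With these ingredients, I would prove by induction on $k$ the key formula
\[
X_{(-(2k+1))} X_{(-(2k-1))} \cdots X_{(-3)} X_{(-1)} \vac \;=\; e(-1)^{k+1}\vac \otimes e^{(k+1) \delta^{(2)}}
\]
inside $V_{m+1}(sl(2)) \otimes F_2$. The base case $k=1$ is the coefficient of $z^{2}$ in $Y(X,z) X = z^{2}\,:e(z) e:\otimes E^{-}(-\delta^{(2)}, z)\, e^{2\delta^{(2)}}$, namely $X_{(-3)} X = e(-1)^{2}\vac \otimes e^{2\delta^{(2)}}$. The inductive step applies $X_{(-(2k+1))}$ to $e(-1)^{k}\vac \otimes e^{k\delta^{(2)}}$ and extracts the $z^{2k}$-coefficient of $\bigl(\sum_{j\ge 0} e(-j-1) e(-1)^{k}\vac \cdot z^{j}\bigr) \otimes z^{2k} E^{-}(-\delta^{(2)}, z)\, e^{(k+1)\delta^{(2)}}$, which picks out $e(-1)^{k+1}\vac \otimes e^{(k+1)\delta^{(2)}}$. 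Specialising to $k = m+1$ produces $e(-1)^{m+2}\vac \otimes e^{(m+2)\delta^{(2)}}$, which vanishes because $e(-1)^{m+2}\vac$ is the defining singular vector of the maximal ideal collapsed in $V_{m+1}(sl(2))$. Rewriting mode indices via $X(n) = X_{(n+1)}$ gives the first identity.

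For the second identity, I would replace the final mode $X_{(-(2m+3))}$ by the higher mode $X_{(-(2m+4))}$, corresponding to the paper's $X(-2m-5)$, and apply it to the already computed vector $e(-1)^{m+1}\vac \otimes e^{(m+1)\delta^{(2)}}$. Expanding, the relevant coefficient of $z^{2m+3}$ involves only the two terms $e(-1)^{m+2}\vac \otimes \delta^{(2)}(-1) e^{(m+2)\delta^{(2)}}$ and $e(-2)e(-1)^{m+1}\vac \otimes e^{(m+2)\delta^{(2)}}$. Both vanish in $V_{m+1}(sl(2))$: the first directly by the singular vector relation, and the second since $e(-2)e(-1)^{m+1}\vac = \tfrac{1}{m+2}\, T(e(-1)^{m+2}\vac)$ and $T$ preserves the maximal ideal. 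This gives the second identity. The main obstacle is setting up the free-field embedding cleanly without invoking Theorem \ref{com-dm1} (stated later); this is handled by verifying directly in $V^k(\g)$ that $X$ satisfies the OPE of its prospective image, in particular $X_{(n)} X = 0$ for $n \geq -2$, via Wick's theorem on $:E^1(z) F^2(z):\,:E^1(w) F^2(w):$ and the vanishing of the bilinear form $(E^1|F^2) = 0$.
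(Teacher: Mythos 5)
Your computation is carried out inside $V_{m+1}(sl(2))\otimes F_2$, i.e.\ inside $D_{m+1,2}$, and the vanishing you obtain there rests on the null vector $e(-1)^{m+2}\vac=0$, which holds only in the \emph{simple} quotient $V_{m+1}(sl(2))$. To transfer that vanishing back to $V_k(\g)$ you need an injective vertex algebra map from $W=\langle X,Y\rangle\subset V_k(\g)$ into $V_{m+1}(sl(2))\otimes F_2$ sending $X\mapsto e(-1)\vac\otimes e^{\delta^{(2)}}$. That is essentially the isomorphism $W\cong D_{m+1,2}$ of Theorem \ref{com-dm1} --- which the paper \emph{deduces from} Lemma \ref{lem-rel-1}: the lemma is what shows $e(-1)^{m+2}\vac=0$ in $W\otimes F_{-2}$, so that $\Phi_2$ factors through the simple quotient. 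So your argument is circular as it stands. Your proposed repair --- checking via Wick's theorem that $X$ has the same OPE as $e(-1)\vac\otimes e^{\delta^{(2)}}$ (e.g.\ $X_{(n)}X=0$ for $n\ge -2$) --- does not close the gap: matching OPEs only exhibits $W$ and $D_{m+1,2}$ as two quotients of a common universal object, and a null-vector relation valid in the simple quotient $D_{m+1,2}$ need not hold in the a priori larger quotient $W$. Deciding which null vectors actually survive in $W$ is precisely the content of the lemma, so it cannot be imported from the target of an embedding that has not yet been constructed.

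The paper's proof avoids this by working entirely inside $V^k(\g)$: it identifies $X(-2m-4)\cdots X(-2)\vac$ with a nonzero multiple of the monomial $F^2(-m-2)\cdots F^2(-1)E^1(-m-2)\cdots E^1(-1)\vac$ in the odd generators, which is a charged singular vector of $V^k(\g)$ by \cite{ST} and hence vanishes in the simple quotient $V_k(\g)$; relation \eqref{rel-02} then follows by applying the derivation $T$ to \eqref{rel-01}. Your inductive free-field computation is a correct calculation in $D_{m+1,2}$, but to make it prove the statement you would first have to establish the embedding $W\hookrightarrow V_{m+1}(sl(2))\otimes F_2$ by an argument independent of the lemma; without that, the identity is proved in the wrong algebra.
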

\begin{proof}
The proof the relation (\ref{rel-01})  follows from the fact that there is $\nu \ne 0$ such that
$$ X(-2 m-4) X(-2 m-2) \cdots X(-4) X(-2) {\bf 1}\!=\! \nu F^2(-m-2) \cdots F^2 (-1) E^1 (-m-2) \cdots E^1 (-1) {\bf 1},$$and
  $$ F^2(-m-2) \cdots F^2 (-1) E^1 (-m-2) \cdots E^1 (-1) {\bf 1}$$
is a charged singular vector in $V^k(\g)$ (cf. \cite{ST}). The relation (\ref{rel-02}) follows by applying derivation on (\ref{rel-01}).
\end{proof}

Consider the vertex algebra $V_k (\g) \otimes F_{-2}$. Set $\delta = \delta^{(-2)}$. As shown in \cite[(4.2)]{BFST},  there exists   another  vertex algebra homomorphism $\Phi_2: V^{m+1}(sl(2)) \rightarrow  W \otimes F_{-2} \subset V_k(\g) \otimes F_{-2} $ uniquely determined by 
$$ e\mapsto  \frac{1}{k+1} X \otimes e^{\delta}, \  f\mapsto \frac{1}{k+1} Y \otimes e^{-\delta}, \quad h \mapsto (m+1) \delta + 2 (m+2) H^+.$$
  Using  Lemma \ref{lem-rel-1} we get that $e(-1) ^{m+2}{\bf 1} = 0$ in $W \otimes F_{-2}$, implying that
  $$ V_{m+1}(sl(2)) \hookrightarrow W \otimes F_{-2}.$$
 
 \begin{thm} \label{com-dm1}We have $W \cong D_{m+1,2}$.
 \end{thm}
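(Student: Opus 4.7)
The plan is to combine the homomorphism $\Phi_2$ with the decomposition
\[ V_{m+1}(sl(2))\otimes F_{-2(m+2)}\;\cong\;D_{m+1,2}\otimes F_{-2} \]
recalled from \cite{ACEJM} just before the statement. By Lemma \ref{lem-rel-1}, the singular vector $e(-1)^{m+2}\vac$ vanishes in $W\otimes F_{-2}$, so $\Phi_2$ factors through $V_{m+1}(sl(2))$, yielding an embedding $V_{m+1}(sl(2))\hookrightarrow W\otimes F_{-2}$.

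First, I would exploit the rank-two Heisenberg inside $W\otimes F_{-2}$ spanned by $H^+$ (which lies in $W$ by \eqref{zincenter}) and $\delta$. The Cartan image $\Phi_2(h)=(m+1)\delta+2(m+2)H^+$ lies in this Heisenberg and has norm $2(m+1)$. There is a unique, up to scalar, Heisenberg vector $\mu$ orthogonal to $\Phi_2(h)$, normalized so that $\langle\mu,\mu\rangle=-2(m+2)$. Since $X$ and $Y$ are neutral with respect to $H^+$ (as $X=E^{1}(-1)F^{2}(-1)\vac$ and $Y=F^{1}(-1)E^{2}(-1)\vac$ pair odd fields with opposite $H^+$-weights), the Heisenberg $M_\mu(1)$ commutes with the image $\Phi_2(V_{m+1}(sl(2)))$, giving an embedding $V_{m+1}(sl(2))\otimes M_\mu(1)\hookrightarrow W\otimes F_{-2}$.

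Next, I would enlarge $M_\mu(1)$ to a lattice vertex algebra $F_{-2(m+2)}$ inside $W\otimes F_{-2}$ by exhibiting vectors $V_{\pm}$ of $\mu$-charge $\pm 2(m+2)$ that commute with $\Phi_2(V_{m+1}(sl(2)))$. Natural candidates are $(m+2)$-fold normal-ordered products of $X\otimes e^{\delta}$, respectively $Y\otimes e^{-\delta}$, projected onto the $V_{m+1}(sl(2))$-trivial isotype: the vanishing identities of Lemma \ref{lem-rel-1} force the top $sl(2)$-content to disappear, leaving precisely the desired lattice vector. Verifying the lattice OPEs for $V_\pm$ (a computation inside $V_{k}(\g)\otimes F_{-2}$ using the relations of \cite[Appendix D]{BFST}) produces an embedding
\[ V_{m+1}(sl(2))\otimes F_{-2(m+2)}\;\hookrightarrow\;W\otimes F_{-2}. \]
By the ACEJM identity this gives $D_{m+1,2}\otimes F_{-2}\hookrightarrow W\otimes F_{-2}$. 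I would then arrange the identification so that the outer $F_{-2}$ on both sides is the one generated by $\delta$, and take the $M_\delta(1)$-coset to obtain $D_{m+1,2}\hookrightarrow W$. Surjectivity is immediate: under the resulting map the generators $\overline X,\overline Y$ of $D_{m+1,2}$ correspond, up to nonzero scalars, to $X$ and $Y$, which generate $W$.

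The main obstacle I anticipate is the explicit construction of the lattice vectors $V_\pm$ and the verification that, after the $sl(2)$-projection, they commute with $\Phi_2(V_{m+1}(sl(2)))$ and generate a faithful copy of $F_{-2(m+2)}$. A subsidiary difficulty is matching the outer $F_{-2}$ factors on both sides of the ACEJM identification with the tautological outer $F_{-2}$ of $W\otimes F_{-2}$; this should reduce to an explicit change-of-basis computation between $(H^+,\delta)$ and $(\Phi_2(h),\mu)$ inside the rank-two Heisenberg, using that both pairs span the same rank-two lattice.
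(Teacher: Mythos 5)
Your overall strategy coincides with the paper's: pass to $W\otimes F_{-2}$, identify the commutant of $\Phi_2(V_{m+1}(sl(2)))$ as a rank-one lattice vertex algebra $F_{-2(m+2)}$ built on the Heisenberg vector orthogonal to $\Phi_2(h)$ (your $\mu$ is, up to normalization, the paper's $\beta=(m+2)(\delta+2H^+)$), deduce $W\otimes F_{-2}\cong V_{m+1}(sl(2))\otimes F_{-2(m+2)}$, and then match the outer $F_{-2}$ factors in the identification of \cite{ACEJM}. The concluding step you describe is exactly how the paper finishes.

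However, there is a genuine gap at the step you yourself flag as the main obstacle, and the candidates you propose for the lattice generators cannot work as stated. The $(m+2)$-fold iterated $(-1)$-product of $X\otimes e^{\delta}$ is, up to a nonzero constant, $\Phi_2\bigl(e(-1)^{m+2}\vac\bigr)$, and the whole point of Lemma \ref{lem-rel-1} is that this vector is \emph{zero} in $W\otimes F_{-2}$; there is nothing left to project onto the $V_{m+1}(sl(2))$-trivial isotype. The correct generators carry an extra unit of $\delta$-charge relative to the number of $X$-factors: the paper takes
$u^{\pm}=X(-2m-2)\cdots X(-2)\vac\otimes e^{(m+2)\delta}$ and $Y(-2m-2)\cdots Y(-2)\vac\otimes e^{-(m+2)\delta}$ (only $m+1$ factors of $X$, resp.\ $Y$), and uses \emph{both} relations \eqref{rel-01} and \eqref{rel-02} to verify directly that $e(0)u^{\pm}=f(0)u^{\pm}=0$, so that $u^{\pm}$ are singular vectors for the commutant with $\beta(0)$-eigenvalue $\mp2(m+2)$ --- no projection is involved. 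Even granting these vectors, two further points must be supplied: that the subalgebra $Z$ they generate exhausts the commutant, which the paper gets by applying $f(n)$ to $u^{+}$ (resp.\ $e(n)$ to $u^{-}$) to recover $e^{\pm\delta}$ and hence $X,Y\in V_{m+1}(sl(2))\otimes Z$; and that $Z\cong F_{-2(m+2)}$, which requires the Li--Xu characterization of lattice vertex algebras \cite{LX} rather than a direct OPE check. Your final matching of the two $F_{-2}$ factors is the right idea; in the paper it is implemented by tracking the generators $f(-1)^{m+1}\vac\otimes e^{\delta^{(-2(m+2))}}$ and $e(-1)^{m+1}\vac\otimes e^{-\delta^{(-2(m+2))}}$ to multiples of $e^{\pm\delta}$.
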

 \begin{proof} Let $\beta =  (m+2) ( \delta +  2  H^+)$.   Then $\beta \in \mbox{Com} (V_{m+1} (sl(2)),  W \otimes F_{-2})$ and 
  $\beta(1) \beta =   -2 (m+2). $ Let $M_{\beta}(1)$ be the Heisenberg vertex algebra generated by $\beta$.
  Let $$u^+ = X(-2m-2) \cdots X(-2) {\bf 1} \otimes e^{(m+2) \delta}, \ \ u^- = Y(-2m-2) \cdots Y(-2) {\bf 1} \otimes e^{-(m+2) \delta} $$ Then
  $$ \beta(0) u ^+ = (2  (m+2) (m+1) - 2(m+2) ^2 ) u^+ = - 2(m+2) u ^+, \ \beta(0) u ^- = 2 (m+2)  u^-.$$
   
  We have:
  \bea  (k+1) e(0) u^+ &=&  X(-2 m-6)   X(-2m-2) \cdots X(-2) {\bf 1}  \otimes e^{\delta}_{2m+4} e^{(m+2)\delta}. \nonumber \\
  && + X(-2 m-5)   X(-2m-2) \cdots X(-2) {\bf 1}  \otimes e^{\delta}_{2m+3} e^{(m+2)\delta} = 0 \nonumber \eea
  Since $Y(2 m+3 + j)   X(-2m-2) \cdots X(-2) {\bf 1} = 0$ for $j \ge 0$, we conclude 
  \bea  (k+1) f(0) u^+ &=&  Y(2 m+3)   X(-2m-2) \cdots X(-2) {\bf 1}  \otimes e^{-\delta}_{-2m-5} e^{(m+2)\delta}= 0 \nonumber \eea
  We prove analogous relations for $u^- $, which implies that
  \bea  u^{\pm} \in \mbox{Com} (V_{m+1} (sl(2)),  W \otimes F_{-2}). \label{izom-upm}
  \eea

  Let $Z$ be the subalgebra of  $W \otimes F_{-2}$ generated by $u ^{\pm}$.
  So we have that $V_{m+1}(sl(2)) \otimes Z \subset  W \otimes F_{-2}$. By applying the action of $f(n)$ on $u^+$ (resp. $e(n)$ of $u^-$), one gets that $e^{\pm \delta} \in V_{m+1}(sl(2)) \otimes Z$. From this one gets $X, Y \in V_{m+1}(sl(2)) \otimes Z$. Therefore:
  $$ W \otimes F_{-2} = V_{m+1}(sl(2)) \otimes Z. $$
  
  We conclude that there is a conformal embedding $M_{\beta}(1) \hookrightarrow Z$, so that $Z$ is generated by singular vectors $u^{\pm}$. Using results on the uniqueness of the lattice vertex algebras (cf. \cite{LX}), we get that  
  $Z \cong F_{-2(m+2)}$. The isomorphism is determined by
  $$ e^{\delta^{(-2 (m+2) )}} \mapsto u ^+, \quad e^{-\delta^{(-2 (m+2) )}} \mapsto a u ^- , $$
  for a certain $a \ne 0$.
  Therefore we get an isomorphism
$$ W \otimes F_{-2} \cong V_{m+1}(sl(2)) \otimes F_{-2(m+2)}.$$ 
     On the other hand, using construction from \cite[Section 6]{ACEJM} we get an isomorphism
 \bea V_{m+1}(sl(2)) \otimes F_{-2(m+2)} \cong D_{m+1,2} \otimes F_{-2} \label{isom-dm2} \eea
 such that the subalgebra $F_{-2}$ is generated by 
 \bea  f(-1) ^{m+1}{\bf 1}  \otimes  e^{\delta^{(-2 (m+2) )}}, \quad   e(-1) ^{m+1} {\bf 1} \otimes  e^{-\delta^{(-2 (m+2) )}}. \label{gen-d2-new} \eea
 (This argument is essentially  based  on the fact that  $f(-1) ^{m+1}{\bf 1}, e(-1) ^{m+1} {\bf 1}$ generate a subalgebra of $V_{m+1}(sl(2))$ isomorphic to $F_{2m+2}$.)
The isomorphism (\ref{isom-dm2}) maps  the generators (\ref{gen-d2-new})  to elements of  $W \otimes F_{-2} $:
$$f(-1) ^{m+1} u^+, \  a e(-1) ^{m+1} u^-,$$
which are proportional to 
$e^{\delta}$, $e^{-\delta}$. Thus, we get an isomorphism $W \otimes F_{-2} \cong D_{m+1,2} \otimes F_{-2}$ which preserves $F_{-2}$.
    This   implies that $W \cong D_{m+1,2}$.

  \end{proof}

\vskip20pt
 \footnotesize{
  \noindent{\bf D.A.}:  Department of Mathematics, Faculty of Science, University of Zagreb, Bijeni\v{c}ka 30, 10 000 Zagreb, Croatia;
{\tt adamovic@math.hr}

\noindent{\bf P.MF.}: Politecnico di Milano, Polo regionale di Como,
Via Anzani 42, 22100 Como,\newline
Italy; {\tt pierluigi.moseneder@polimi.it}

\noindent{\bf P.P.}: Dipartimento di Matematica, Sapienza Universit\`a di Roma, P.le A. Moro 2,
00185, Roma, Italy;\newline {\tt papi@mat.uniroma1.it}
}

\end{document}